\tikzstyle{perspective adjusted}=[%
\tikzstyle{perspective nqgs}=[%
\newtheorem{theorem}{Theorem}[section]
\newtheorem{corollary}[theorem]{Corollary}
\newtheorem{proposition}[theorem]{Proposition}
\newtheorem{lemma}[theorem]{Lemma}
\theoremstyle{definition}
\newtheorem{conjecture}{Conjecture}
\newtheorem{definition}[theorem]{Definition}
\newtheorem{remark}[theorem]{Remark}
\newtheorem{question}[theorem]{Question}
\newtheorem{example}[theorem]{Example}
\newcommand\up[1]{\left\lceil #1 \right\rceil}
\newcommand{\pro}[2]{\langle #1, #2 \rangle}
\newcommand{\ad}[2]{{#1}^{(#2)}}
\newcommand{\adP}[1]{P^{(#1)}}
\newcommand{\adQ}[1]{Q^{(#1)}}
\newcommand{\oni}[1]{{#1}^{[1]}}
\newcommand\Ps{\ad{P}{s}}
\renewcommand\emptyset{\varnothing}
\renewcommand\ge{\geqslant}
\renewcommand\le{\leqslant}
\renewcommand\geq{\geqslant}
\renewcommand\leq{\leqslant}
\renewcommand\tilde{\widetilde}
\newcommand\dual{\vee}
\newcommand{\NF}{\mathcal{N}}
\renewcommand\epsilon{\varepsilon}
\renewcommand\phi{\varphi}
\newcommand{\qd}{\mu}  
\newcommand{\nf}{\tau}
\newcommand\N{\mathbb N}
\newcommand\Q{\mathbb Q}
\newcommand\R{\mathbb R}
\newcommand\Z{\mathbb Z}
\renewcommand\P{\mathbb P}
\newcommand\eins{\mathbbm 1}
\newcommand\transpose{{\rm T}}
\providecommand{\intr}{\mathop{\rm int}\nolimits}%
\providecommand{\aff}{\mathop{\rm aff}\nolimits}%
\providecommand{\conv}{\mathop{\rm conv}\nolimits}%
\providecommand{\pos}{\mathop{\rm pos}\nolimits}%
\providecommand{\lcm}{\mathop{\rm lcm}\nolimits}%
\providecommand{\relint}{\mathop{\rm relint}\nolimits}%
\providecommand{\cd}{\mathop{\rm cd}\nolimits}%
\providecommand{\core}{\mathop{\rm core}\nolimits}%
\providecommand{\height}{\mathop{\rm ht}\nolimits}%
\title{Polyhedral Adjunction Theory}
\author[Di Rocco, Haase, Nill, Paffenholz]{Sandra Di Rocco, Christian Haase, Benjamin Nill, Andreas Paffenholz}
\address{Sandra Di Rocco \\ KTH Stockholm \\ Sweden}
\email{dirocco@math.kth.se}
\address{Christian Haase \\ Goethe-Universit\"at Frankfurt \\ Germany}
\email{haase@math.uni-frankfurt.de}
\address{Benjamin Nill \\ Case Western Reserve University \\
% Cleveland, OH \\
USA}
\email{benjamin.nill@case.edu}
\address{Andreas Paffenholz \\ TU Darmstadt \\ Germany}
\email{paffenholz@mathematik.tu-darmstadt.de}
\subjclass[2010]{52B20, 14M25, 14C20}
\thanks{Di Rocco has been partially supported by VR-grants,
  NT:2006-3539 and NT:2010-5563.  Haase and Nill were supported by
  Emmy Noether fellowship HA 4383/1 and Heisenberg fellowship HA
  4383/4 of the German Research Society (DFG). Nill is supported by the US National Science Foundation (DMS 1203162). 
  Paffenholz is supported by the Priority Program 1489 of the German Research Foundation.
}
\begin{document}

\begin{abstract}
  In this paper we offer a combinatorial view on the adjunction theory
  of toric varieties. Inspired by classical adjunction theory of
  polarized algebraic varieties we explore two convex-geometric
  notions: the $\Q$-codegree and the nef value of a rational polytope
  $P$. We prove a structure theorem for lattice polytopes $P$ 
  with large $\Q$-codegree. For this, we define the adjoint polytope
  $\adP{s}$ as the set of those points in $P$ whose lattice distance
  to every facet of $P$ is at least $s$. It follows from our main
  result that if $\adP{s}$ is empty for some $s < 2/(\dim P+2)$, then the lattice polytope $P$ has lattice width one. This has consequences in Ehrhart theory
  and on polarized toric varieties with dual defect. Moreover, we
  illustrate how classification results in adjunction theory can be
  translated into new classification results for lattice polytopes.
\end{abstract}

\maketitle

\section*{Introduction}
Let $P \subseteq \R^n$ be a rational polytope of dimension $n$.
Any such polytope $P$ can be described in a unique minimal way as
\[P = \{x \in \R^n \,:\,  \pro{a_i}{x} \geq b_i,\; i=1, \ldots, m\}\]
where the $a_i$ are primitive rows of an $m \times n$ integer matrix $A$, and $b \in \Q^m$.

For any $s \geq 0$ we define the {\em adjoint polytope} $\adP{s}$ as
\[\adP{s} := \{x \in \R^n \,:\, A x \geq b+s \eins\},\]
where $\eins = (1, \ldots, 1)^\transpose$. 

We call the study of such polytopes $\adP{s}$ {\em polyhedral adjunction theory}. 
\begin{figure}[ht]
\includegraphics[height=2cm]
{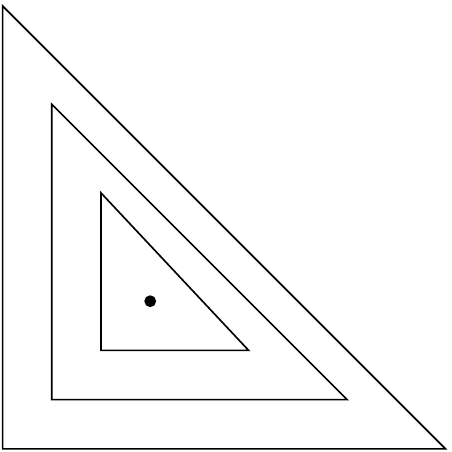}
\hspace{2cm}
\includegraphics[height=2cm]
{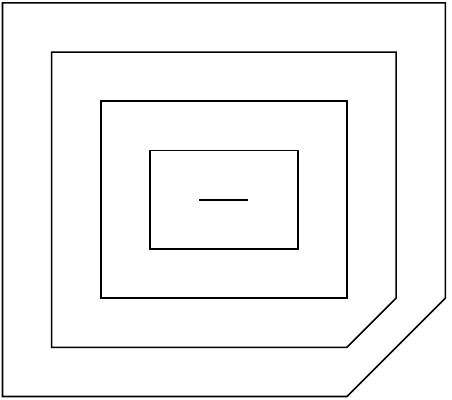}
\caption{Two examples of polyhedral adjunction}
\label{fig:start}
\end{figure}

Adjunction theory is an area of algebraic geometry which has played a
fundamental role in the classification of projective algebraic
varieties,
\cite{BT98,BDiT03,BFS92,BS94,BS95,FS89,Fuj92,Fuj96,Fuj87,Fuj97,Nak97,Som86}. The
correspondence between polarized toric varieties and lattice polytopes
provides a natural ground for an adjunction theory of lattice
polytopes, as suggested in \cite{DDP09}. 

The main purpose of this article 
is to convince the reader that polyhedral adjunction theory is an exciting area of research with many open questions connecting 
toric geometry, polyhedral combinatorics and geometry of numbers.

By the toric dictionary between convex geometry of polytopes and
geometry of projective toric varieties, a lattice polytope $P$ defines
a toric variety $X_P$ polarized by an ample line bundle $L_P$. The
pair $(X_P,L_P)$ is often referred to as a {\em polarized toric variety}.
Sometimes the pair $(X,L)$ is replaced by the equivariant embedding
$X\hookrightarrow \P^{N}$  defined by a suitable multiple of the line
bundle $L.$ 
Adjunction theory provides tools to characterize and classify the
pairs $(X,L)$ by looking at the behavior of the {\em adjoint systems}
$|uK_X+vL|,$ for integers $u,v$, where $K_X$ is the canonical divisor
in $X$. We refer to Section \ref{AG} for details.
If $P$ is the polytope defined by the line bundle $L$ on $X$, then
$\ad{(vP)}{u}$ is the polytope defined by the line bundle
$uK_X+vL.$ 

In adjunction theory the {\em nef value } $\tau(L)$ and the {\em
  unnormalized spectral value}  $\mu(L)$ (sometimes called the
canonical threshold) measure the positivity of the adjoint systems. In
Section \ref{AG} an account of these notions is given.  An `integral' 
version of the unnormalized spectral value for lattice polytopes has been present
in the literature for quite some time (even though it was never
defined this way) under the name \emph{codegree}, denoted by $\cd(P)$,
see Definition~\ref{def:codegree}.
This notion appeared in connection with Ehrhart theory and was studied by Batyrev and Nill in \cite{BN07}.

A `rational' version, again for lattice polytopes, has recently been introduced in \cite{DDP09}.
Let $c$ be the maximal rational number for which $\adP{c}$ is non-empty. 
Its reciprocal, $\qd(P):=1/c$ equals precisely the unnormalized spectral value $\mu(L_P)$. It is called the {\em $\Q$-codegree} of
$P$ (Definition~\ref{def:setting}). 

A long-standing conjecture in algebraic geometry states  that general
polarized varieties should have unnormalized spectral values that are bounded
above by approximately half their dimension.
In particular, Beltrametti and Sommese conjectured the following, see
Remark \ref{conj} for details.
\begin{conjecture}[Beltrametti \&\, Sommese~\cite{BS94}]\label{C1}
If an $n$-dimensional polarized variety $X$ is smooth, then
$\mu(L)>\frac{n+1}{2}$ implies that $X$ is a fibration.\end{conjecture}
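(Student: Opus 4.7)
Since Conjecture \ref{C1} is a longstanding open problem, my proof proposal is necessarily speculative; it follows the standard adjunction-theoretic strategy for bounds of this flavor. The guiding idea is that a large value of $\qd(L)$ forces the adjoint series $|uK_X+vL|$ to degenerate in a controlled way, and via the Kawamata--Shokurov base-point-free theorem one extracts a Mori contraction whose fibers must be large relative to $\dim X$.

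First I would convert the assumption on the unnormalized spectral value into a statement about the nef value $\nf(L)$. Because a nef divisor is pseudo-effective, general adjunction yields $\nf(L)\geq \qd(L) > \tfrac{n+1}{2}$, so $K_X+\nf(L)L$ is nef but not ample. After clearing denominators, the base-point-free theorem produces the \emph{nef value morphism} $\varphi:X\to Y$ with connected fibers, contracting exactly the extremal face on which $K_X+\nf(L)L$ is trivial. Smoothness of $X$ makes $\varphi$ a Mori contraction of an extremal ray $R$, and a standard computation (Kawamata's rationality theorem together with the definition of $\nf$) gives $\ell(R)\geq \nf(L) > \tfrac{n+1}{2}$.

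Second I would apply Wi\'sniewski's fiber-dimension inequality
\[\dim F + \dim Y \;\geq\; \dim X + \ell(R) - 1\]
to any positive-dimensional fiber $F$ of $\varphi$. The length bound forces $\dim F > \tfrac{n-1}{2}$, which is incompatible with $\varphi$ being birational: a birational extremal contraction of a smooth $n$-fold is necessarily divisorial with exceptional locus a prime divisor, whose fibers cannot uniformly exceed dimension $(n-1)/2$ without contradicting the geometry of divisorial contractions on smooth varieties. Therefore $\dim Y < \dim X$, and $\varphi$ is the desired fibration.

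The main obstacle, and the reason the conjecture remains open, is obtaining the length bound $\ell(R)\geq \nf(L)$ in its full sharpness and then ruling out pathological birational behavior without extra hypotheses (such as $L$ very ample, where results of Fujita, Ionescu, and others apply); both tasks require delicate handling of discrepancies and form the technical core of smooth adjunction theory. The polyhedral analog developed in this paper sidesteps these Mori-theoretic subtleties by replacing discrepancies with lattice distances to facets, so that the combinatorial degeneration of $\adP{s}$ as $s\uparrow 1/\qd(P)$ can be tracked directly to produce a lattice projection of width one, the polytopal counterpart of a fibration structure.
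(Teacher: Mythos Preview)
The statement you were given is labeled a \emph{conjecture} in the paper, and the paper does not claim to prove it; on the contrary, the introduction states explicitly that ``despite much work both Conjectures~\ref{C1} and~\ref{C2} are still open in their original generality.'' There is therefore no proof in the paper to compare your proposal against. The paper's own contribution is the polyhedral analog (Theorem~\ref{main} and Corollary~\ref{cor}), which establishes a toric version under the slightly stronger hypothesis $\qd \ge \tfrac{n+2}{2}$, and Remark~\ref{conj} recalls that the full smooth conjecture of Beltrametti--Sommese remains open.

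Regarding your speculative outline itself, two points deserve correction. First, the Wi\'sniewski inequality is misstated: the correct form is
\[
\dim \mathrm{Exc}(R) + \dim F \;\ge\; \dim X + \ell(R) - 1,
\]
with the exceptional locus $\mathrm{Exc}(R)$ in place of $\dim Y$. Second, your claim that fibers of a divisorial contraction on a smooth variety ``cannot uniformly exceed dimension $(n-1)/2$'' is false as stated: a smooth blow-down of a divisor to a point has fiber dimension $n-1$, so the length bound $\ell(R)>\tfrac{n+1}{2}$ combined with Wi\'sniewski does not by itself exclude the birational case. As the paper notes in Remark~\ref{conj} and Proposition~\ref{kawa}(\ref{item:prop:kawa2}), the conjecture is equivalent to showing $\qd(L)=\nf(L)$ whenever $\qd(L)>\tfrac{n+1}{2}$; your sketch instead attempts to deduce non-birationality from $\nf(L)>\tfrac{n+1}{2}$ alone, which is a related but distinct (and also unresolved) statement. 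You do acknowledge the gap honestly, but the specific mechanism you propose for ruling out birational contractions does not work.
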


Let us consider lattice polytopes again. A Cayley sum of $t+1$ polytopes is a polytope (denoted by  $P_0*\ldots* P_t$) 
built by assembling the polytopes $P_i$ along the vertices of a
$t$-dimensional simplex, see Definition \ref{CayleyDef}. 
For $t=0$, the condition to be a Cayley sum is vacuous. So when we say
that $P$ has a Cayley structure we mean a nontrivial one with $t>0$.
For example, for $t=1$, the condition is known in the literature as
$P$ having {\em lattice width one}.
From an (apparently) unrelated perspective Batyrev and Nill
conjectured that there is a function $f(n)$ such that, if $\cd(P)\geq
f(n)$, then the polytope has a nontrivial Cayley structure. This can
be sharpened as follows:
\begin{conjecture}[Dickenstein \&\, Nill~\cite{DN10}]\label{C2} 
  If an $n$-dimensional lattice polytope $P$ satisfies $\cd(P) >
  \frac{n+2}{2}$, then $P$ decomposes as a Cayley sum of lattice
  polytopes of dimension at most $2 (n+1-\cd(P))$.
\end{conjecture}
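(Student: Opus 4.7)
The plan is to translate the hypothesis $\cd(P) > (n+2)/2$ into a statement about the adjoint polytopes $\adP{s}$ and then extract a Cayley structure from the way $\adP{s}$ degenerates as $s$ grows toward the critical value $c = 1/\qd(P)$. Because $\qd(P) \le \cd(P)$ (any interior lattice point of $kP$ sits at lattice distance at least $1/k$ from every facet of $P$), the hypothesis forces $c$ to be small, so that the core $\core(P) := \adP{c}$ has dimension strictly less than $n$.

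First I would analyze which facets of $P$ become active at $s = c$, that is, whose defining inequalities $\pro{a_i}{x} \ge b_i + s$ are tight on $\core(P)$. Let $F_{i_0}, \ldots, F_{i_t}$ be the corresponding facets, with primitive inner normals $a_{i_0}, \ldots, a_{i_t}$. The key claim is that these normals span a saturated sublattice of $\Z^n$ of rank $t$, dual to a rational complement of the linear hull of $\core(P)$, and that the affine map
\[
\pi(x) = \tfrac{1}{c}\bigl(\pro{a_{i_0}}{x} - b_{i_0},\, \ldots,\, \pro{a_{i_t}}{x} - b_{i_t}\bigr)
\]
sends $P$ surjectively onto the standard simplex $\Delta_t$ with lattice fibers. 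This would exhibit $P$ as the Cayley sum $P_0 * \cdots * P_t$ of the preimages of the vertices of $\Delta_t$, with $\dim P_j \le n - t$.

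A codegree accounting would then close the argument: for a Cayley sum one expects the bound $\cd(P) \le \cd(F) + t + 1$ where $F$ is a common fiber, so the hypothesis $\cd(P) > (n+2)/2$ together with $\dim F = n - t$ yields the required $\dim P_j \le 2(n+1-\cd(P))$. Induction on the dimension, grounded in the paper's main theorem for the case $t=1$ (lattice width one), should handle the remaining bookkeeping.

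The main obstacle is justifying the unimodularity in the key claim: that the active normals form a lattice family witnessing a genuine Cayley decomposition rather than merely a rational projection onto a simplex. The adjoint polytope lives over $\Q$, so critical combinatorial events in the family $\{\adP{s}\}$ as $s \to c$ --- facets collapsing, edges contracting --- need not occur at lattice points, and one must rule out that the active normals fail to be primitive as a collection or that $\pi$ fails to be surjective over $\Z$. The $t = 1$ case treated in the paper is special because two opposite active normals are automatically in unimodular position; extending this to larger $t$ appears to require genuinely new ideas, which is presumably why Conjecture~\ref{C2} remains open in general.
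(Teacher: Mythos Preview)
This statement is Conjecture~\ref{C2}, which the paper explicitly leaves \emph{open}; there is no proof in the paper to compare against. What the paper does prove is the related Theorem~\ref{main}, with $\qd(P)$ in place of $\cd(P)$, and by a different mechanism: one passes to the dual $\sigma^\vee$ of the cone over $P\times\{1\}$, builds an auxiliary pyramid $R$ from its facet normals, computes $\cd(R)=\lfloor\qd(P)\rfloor+1$, and uses Lemma~\ref{sums} to split the height functional $e_{n+1}$ into many nonzero lattice vectors in $\sigma^\vee$, whence Lemma~\ref{lemma:Batyrev-Nill}(\ref{item:lemma:Batyrev-Nill3}) yields the Cayley structure. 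No unimodularity of active facet normals is ever asserted.

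Your sketch has gaps beyond the one you flag. The inequality $\qd(P)\le\cd(P)$ points the wrong way for you: a large $\cd(P)$ gives no lower bound on $\qd(P)$, hence no upper bound on $c=1/\qd(P)$, and no control over $\core(P)$ beyond the automatic $\dim\core(P)<n$ of Proposition~\ref{prop:equi}. The number of facets active on $\core(P)$ need not equal $n-\dim\core(P)+1$; Lemma~\ref{lemma:span} only says their normals positively span $K(P)^\perp$, so your indexing $F_{i_0},\dots,F_{i_t}$ already presupposes what must be shown. The map $\pi$ you write sends any core point to $(1,\dots,1)\notin\Delta_t$, so as written it is not a projection onto $\Delta_t$ at all. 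And the paper does not establish a ``$t=1$ case'' of Conjecture~\ref{C2}: Corollary~\ref{cor} needs $\qd(P)\ge(n+2)/2$, which is \emph{not} implied by $\cd(P)>(n+2)/2$. The unimodularity obstacle you name is genuine, and it is precisely why the conjecture remains open.
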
 
The polarized toric variety associated to a Cayley polytope is
birationally fibered in projective spaces, as explained in
Section \ref{fiber}.  It follows that Conjecture \ref{C2} could be considered
an `integral-toric' version of Conjecture \ref{C1} extended to singular
varieties. It also suggests that geometrically it would make sense to replace
$\cd(P)$ by $\qd(P)$ and use the bound $(n+1)/2$ from
Conjecture~\ref{C1}. This leads to the following reformulation (we
note that $\mu(P) \le \cd(P)$):
\begin{conjecture}\label{C3} 
  If an $n$-dimensional lattice polytope $P$ satisfies $\mu(P) >
  \frac{n+1}{2}$, then $P$ decomposes as a Cayley sum of lattice
  polytopes of dimension at most $\lfloor 2 (n+1-\mu(P)) \rfloor$.
\end{conjecture}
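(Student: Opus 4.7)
Set $c = 1/\mu(P)$, so that by definition of the $\Q$-codegree, $\adP{c}$ is the last non-empty adjoint polytope; the hypothesis $\mu(P) > (n+1)/2$ becomes $c < 2/(n+1)$, and the target dimension bound is $k := \lfloor 2(n+1-\mu(P))\rfloor$. The plan is to argue by induction on $n$, producing one Cayley direction at a time by invoking the structure theorem alluded to in the abstract -- which forces lattice width one when $\mu(P) > (n+2)/2$ -- and then iterating on a suitable base polytope of smaller dimension.

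First I would use the structure theorem to produce an initial Cayley direction. When $\mu(P) > (n+2)/2$, this is immediate. In the narrower range $\mu(P) \in ((n+1)/2, (n+2)/2]$ not covered directly by the theorem, one already has $k \geq n-1$, so the conjecture only demands the existence of \emph{some} nontrivial Cayley structure. For this boundary case, I would analyze the critical adjoint $\adP{c}$ and exploit the combinatorics of those facets of $P$ that witness the minimum distance $c$, aiming to extract a primitive integral linear functional on which $P$ has width one.

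Next, suppose $P = P_0 * P_1$ with respect to a primitive functional $w$ of width one, where $P_0, P_1$ lie in parallel affine hyperplanes and project to lattice polytopes $Q_0, Q_1 \subseteq \R^{n-1}$. Set $B := \conv(Q_0 \cup Q_1)$. A central step is to establish a comparison between $\mu(P)$ and the $\Q$-codegree of $B$ (or of a closely related auxiliary polytope built from $Q_0$ and $Q_1$) of the form
\[
\mu(B) \;\geq\; \mu(P) - \delta
\]
with $\delta \leq 1$, the right value of $\delta$ coming from a direct computation of $\adP{s}$ for Cayley polytopes, where the $s$-adjoint slices as a parametrized family of adjoints of $B$. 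Granting such a comparison, the inductive hypothesis applies to $B$ in dimension $n-1$, yielding a Cayley decomposition whose factors have dimension at most $k$; pulling back through the Cayley structure of $P$ then delivers the desired decomposition of $P$.

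The main obstacle is the comparison step itself: establishing a sufficiently sharp inequality between $\mu(P)$ and the $\Q$-codegree of the base polytope $B$, sharp enough both to drive the induction and to preserve the precise dimension bound $k = \lfloor 2(n+1-\mu(P))\rfloor$ through iterated application. A secondary difficulty is to ensure that the iterated Cayley directions are compatible with the ambient lattice -- that is, they span a unimodular sublattice -- so that the final polytope is a genuine Cayley sum of \emph{lattice} polytopes in the sense of Definition~\ref{CayleyDef}. Finally, the boundary range $\mu(P) \in ((n+1)/2, (n+2)/2]$ lies outside the scope of the main structure theorem as stated, and covering it is likely to require either a sharper structural result or an ad hoc analysis of the supporting facets at the critical adjoint.
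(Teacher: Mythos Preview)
The statement you target is Conjecture~\ref{C3}, which the paper leaves \emph{open}; there is no proof in the paper to compare against. What the paper establishes is the weaker Theorem~\ref{main} (and Corollary~\ref{cor}), where the hypothesis $\mu(P)>\tfrac{n+1}{2}$ is replaced by $\mu(P)\ge\tfrac{n+2}{2}$.

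Your inductive scheme has a structural circularity beyond the gaps you already flag. Grant the comparison $\mu(B)\ge\mu(P)-1$. Starting from $\mu(P)>\tfrac{n+1}{2}$, one peel gives $B$ in dimension $n-1$ with $\mu(B)>\tfrac{n-1}{2}$; but to invoke the inductive hypothesis on $B$ you need $\mu(B)>\tfrac{n}{2}$, which does not follow. Each peel reduces both $\mu$ and the dimension by~$1$, so the margin $\mu-\tfrac{\dim+1}{2}$ drops by~$\tfrac12$ at every step; after finitely many iterations you are forced into the boundary range $\tfrac{m+1}{2}<\mu\le\tfrac{m+2}{2}$ in some dimension $m$, and for even $m$ this is precisely the case left open by Corollary~\ref{cor}. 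Thus the ``ad hoc analysis'' you defer is not a fringe case but the entire content of the conjecture. Separately, the comparison $\mu(\conv(Q_0\cup Q_1))\ge\mu(P)-1$ is itself unproven and not obviously true: the facets of $B=\conv(Q_0\cup Q_1)$ need not come from facets of $P$, so the defining inequalities of $\adP{s}$ do not transfer to $\ad{B}{s}$.

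For contrast, the paper's proof of the weaker Theorem~\ref{main} does not peel Cayley directions one at a time. It first uses the natural projection (Proposition~\ref{prop:projection}) to reduce to $\core(P)=\{x\}$, then works dually: it forms the pyramid $R\subset(\R^{n+1})^*$ over the primitive facet normals of the cone over $P$, shows $\cd(R)=\lfloor\mu(P)\rfloor+1$, and uses Lemma~\ref{sums} to write the height functional $e_{n+1}$ as a sum of many nonzero lattice vectors in the dual cone. Lemma~\ref{lemma:Batyrev-Nill}(\ref{item:lemma:Batyrev-Nill3}) then yields the full Cayley structure in a single stroke, with the dimension bound read off from the number of summands. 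This avoids the cumulative loss inherent in your iteration; its limitation to $\mu(P)\ge\tfrac{n+2}{2}$ comes from the integrality of $\cd(R)$, and removing that last half-unit is exactly what remains open.
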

The main result of this paper is Theorem \ref{main} which proves a
slightly weaker version of Conjecture \ref{C3}, with $\mu(P) >
\frac{n+1}{2}$ replaced by $\mu(P) \ge \frac{n+2}{2}$
(cf.~Corollary~\ref{cor}). 

Despite much work both Conjectures \ref{C1} and \ref{C2} are still
open in their original generality. It is known that $f(n)$ can be
chosen quadratic in $n$ (\cite{HNP09}) and that Conjecture $\ref{C2}$
is true for smooth polytopes (\cite{DDP09,DN10}). The results in \cite{DDP09} and \cite{DN10} also imply 
that for toric polarized manifolds Conjecture \ref{C1} holds
for $\mu(L) > \frac{n+2}{2}$.\\

Besides the underlying geometric intuition and motivation,
polyhedral adjunction theory and the results of this paper have
connections with other areas.

%\noindent{\em - geometry of numbers.} 
\subsubsection*{Geometry of Numbers}
It follows from the definition of the $\Q$-codegree that $\mu(P) > 1$
implies that $P$ is {\em lattice-free}, i.e., it has no interior
lattice points. Lattice-free polytopes are of importance in geometry
of numbers and integer linear optimization, see \cite{Averkov-etal,
  Nill-Ziegler} for recent results. Lattice-free simplices turn
up naturally in singularity theory \cite{Morrison-etal}. Most
prominently, the famous flatness theorem states that $n$-dimensional
lattice-free convex bodies have bounded lattice width (we refer to
\cite{Barvinok} for details). Cayley polytopes provide the most
special class of lattice-free polytopes: they have lattice width one,
i.e., the vertices of the polytope lie one two parallel affine
hyperplanes that do not have any lattice points lying strictly between
them. Our main result, Corollary~\ref{cor}, shows that lattice
polytopes with sufficiently large $\Q$-codegree have to be Cayley
polytopes. This hints at a close and not yet completely understood
relation between the $\Q$-codegree and the lattice width of a lattice
polytope.

Let us remark that for $n \ge 3$ Corollary~\ref{cor} only provides a
sufficient criterion for $P$ to be a Cayley polytope. For
instance, $P = [0,1]^n$ has lattice width one, but $\qd(P)=2 <
\frac{n+2}{2}$.  Still, for even $n$ the choice of $\frac{n+2}{2}$ is
tight. Let $P = 2\Delta_n$, where $\Delta_n := \conv(0, e_1, \ldots,
e_n)$ is the unimodular $n$-simplex. Here, $P$ does not have lattice
width one, since every edge contains a lattice point in the middle. On
the other hand, we have $\qd(P) = \frac{n+1}{2}$. Since for $n$ even
we have $\cd(P) = \frac{n+2}{2}$, this example also shows that the
bound $\frac{n+2}{2}$ in Conjecture~\ref{C2} is sharp.

%\noindent{\em - Projective duality.}
\subsubsection*{Projective Duality} 
There is evidence that the unnormalized spectral value is connected to
the behavior of the associated projective variety under projective duality. An
algebraic variety is said to be {\em dual defective}, if its dual
variety has codimension strictly larger than $1$. The study of dual defective projective
varieties is a classical area of algebraic geometry (starting from
Bertini) and a growing subject in combinatorics and elimination
theory, as it is related to discriminants~\cite{GKZ94}.
It is known that nonsingular dual defective polarized varieties
necessarily satisfy $\qd > \frac{n+2}{2}$ \cite{BFS92}. 
On the other hand, in \cite{DN10,DiR06} it was shown that a polarized
nonsingular toric variety corresponding to a lattice polytope $P$ as
above is dual defective if and only if $\qd > \frac{n+2}{2}$. It was
conjectured in \cite{DN10} that also in the singular toric case $\qd >
\frac{n+2}{2}$ would imply $(X_P,L_P)$ to be dual defective.
Theorem~\ref{new}
% \ref{thm:main} 
gives significant evidence in favor of this conjecture, as it was
shown in \cite{CC07,Est10} that the lattice points in such a dual
defective lattice polytope lie on two parallel hyperplanes. Moreover,
using our main result we verify a weaker version of this conjecture
(Proposition~\ref{dual}).

%\noindent{\em- Clasification of polytopes and adjunction theory
%beyond the Gorenstein class.}  
\subsubsection*{Classification of polytopes and adjunction theory
  beyond $\Q$-Gorenstein varieties}
We believe that polyhedral adjunction theory can help to develop
useful intuition for problems in (not necessarily toric) classical
adjunction theory, when no algebro-geometric tools or results exist so
far.
For instance, defining $\qd$ makes sense in the polyhedral setting
even if the canonical divisor of the toric variety is not
$\Q$-Cartier.

\subsection*{How to read this paper.} 
Sections \ref{sec:codegree}--\ref{sec:cayley}, as well as the
appendix, are kept purely combinatorial, no prior knowledge of
algebraic or toric geometry is assumed.  The algebro-geometrically
inclined reader may jump directly to Section \ref{AG}. We refer the
reader who is unfamiliar with polytopes to \cite{Zie95}.

In Section~\ref{sec:codegree} we introduce the two main players: the
$\Q$-codegree and the nef value of a rational polytope. Section
\ref{sec:natural} proves useful results about how these invariants
behave under (natural) projections.  These results should be viewed as
a toolbox for future applications.  Section \ref{sec:cayley} contains
the main theorem and its proof. The algebro-geometric background and
implications are explained in Section \ref{AG}.  In an appendix we
include a combinatorial translation of some well-known
algebro-geometric classification results by Fujita which we think may
be of interest to combinatorialists.

\subsubsection*{Acknowledgements}
  This work was carried out when several of the authors met at FU
  Berlin, KTH Stockholm and the Institut Mittag-Leffler. The authors
  would like to thank these institutions and the G\"oran Gustafsson
  foundation for hospitality and financial support.

  We thank Sam Payne for pointing out the reference \cite{Fuj87},
  Michael Burr for exhibiting the relation to the straight skeleton
  and Alicia Dickenstein for the proof of
  Proposition~\ref{def-prop}. Finally we would like to thank the
  anonymous referees for several suggestions that led to improvements
  and clarifications.

\section{The $\Q$-codegree, the codegree, and the nef value}\label{sec:codegree}

Throughout let $P \subseteq \R^n$ be an $n$-dimensional rational polytope.

\subsection{Preliminaries}

Let us recall that $P$ is a \emph{rational polytope} if the vertices
of $P$ lie in $\Q^n$. Moreover, $P$ is a \emph{lattice polytope}, if
its vertices lie in $\Z^n$. We consider lattice polytopes up to
lattice-preserving affine transformations. Let us denote by
$\pro{\cdot}{\cdot}$ the pairing between $\Z^n$ and its dual lattice
$(\Z^n)^*$.

There exists a natural lattice distance function $d_P$ on $\R^n$ such
that for $x \in \R^n$ the following holds:
$x \in P$ (respectively, $x \in \intr(P)$) 
if and only if $d_P(x) \geq 0$ (respectively, $d_P(x) > 0$).

\begin{definition}\label{inequalities}
Let $P$ be given by the inequalities
\begin{align} \label{eq:inequalities} \tag{$*$}
  \pro{a_i}{\cdot} \geq b_i
  \qquad
  \text{for } i = 1, \ldots, m
\end{align}
where $b_i \in \Q$ and the $a_i \in (\Z^n)^*$ are primitive (i.e.,
they are not the multiple of another lattice vector).
We consider the $a_i$ as the rows of an $m \times n$ integer matrix
$A$. Further, we assume all inequalities to define facets $F_i$ of $P$.
Then for $x \in \R^n$ we define the {\em lattice distance} from $F_i$
by
\[d_{F_i}(x) := \pro{a_i}{x} - b_i\]
and the {\em lattice distance} with respect to $\partial P$ by
\[d_P(x) := \min_{i = 1, \ldots, m} d_{F_i}(x).\]
For $s > 0$ we define the adjoint polytope as
\[\ad{P}{s} := \{x \in \R^n \,:\, d_P(x) \geq s\}.\]
\end{definition}
\begin{remark}
We remark that it is important to assume that all $F_i$ are {\em
  facets}, as the following two-dimensional example shows.
Let $a_1 := (-1,1)$, $a_2 := (1,2)$, $a_3 := (0,-1)$, $a_4
:= (0,1)$.
We set $b_1 := 0$, $b_2 := 0$, $b_3 := -1$, $b_4 := 0$. This defines
the lattice triangle $P := \conv((0,0), (1,1), (-2,1))$ 
having facets $F_1,F_2,F_3$, while $F_4 := \{x \in P \,:\,
\pro{a_4}{x} = 0\}$ is just the vertex $(0,0)$. Then the point $x
:= (-1/6,1/4)$ satisfies $d_P(x) = 1/3$, however $\pro{a_4}{x} =
1/4$. Note that $a_4$ is a strict convex combination of $(0,0)$, $a_1$ and $a_2$. 
It can be shown that such a behaviour cannot occur for canonical rational polytopes 
in the sense of Definition~\ref{canonical-def} below.
\end{remark}
\begin{figure}[ht]
  \includegraphics[width=5cm]{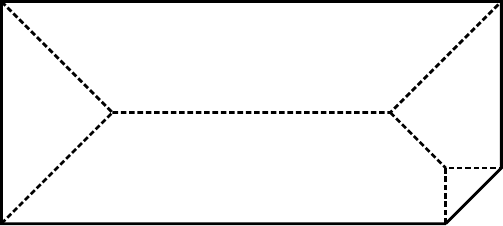}
\caption{The skeleton of vertices of the adjoint polytopes}
\label{vertex-skeleton}
\end{figure}
\begin{remark} 
As the parameter $s$ varies, the vertices of the adjoint polytopes
trace out a skeleton of straight line segments (compare
Figure~\ref{vertex-skeleton} and Lemma~\ref{lemma:QGorenstein}).  
In computational geometry there are similar constructions such as the
medial axis and the straight skeleton \cite{AA95,EE98} which are of
importance in many applications from geography to computer graphics.
``Roof constructions'' such as $M(P)$ in Proposition~\ref{prop:M(P)}
are also intensively studied in
this context (compare Figure~\ref{fig:mon}). 
The skeleton proposed here is different, since it uses a distance
function which is invariant under lattice-preserving affine transformations and not
defined in terms of Euclidean distance or angles.
\end{remark}
Let us note some elementary properties of {\em polyhedral adjunction}:
\begin{proposition}\label{prop:elem}
Let $s \geq 0$.
\begin{enumerate}
\item\label{item:adj1} Each facet of $\ad{P}{s}$ is of the form
\[\ad{F}{s} := \{x \in \ad{P}{s} \,:\, d_F(x) = s\}\]
for some facet $F$ of $P$.
\item\label{item:adj2} Assume $\ad{P}{s}$ has dimension $n$, and let $x \in
  \ad{P}{s}$. Then $d_{\ad{P}{s}}(x) = d_P(x) - s$.
  Moreover, if $x \in \intr(\adP{s})$ and $d_P(x) = d_F(x)$ for a
  facet $F$ of $P$, then $\ad{F}{s}$ is a facet of $\ad{P}{s}$, and
  $d_{\ad{P}{s}}(x) = d_{\ad{F}{s}}(x)$.
\item\label{item:adj3} Assume $\ad{P}{s}$ has dimension $n$, and let $r \geq 0$. Then
  \[\ad{(\ad{P}{s})}{r} = \ad{P}{s+r}.\]
\item\label{item:adj4} For $r > 0$ we have $r ( \ad{P}{s} ) = \ad{(r P)}{r s}$.
\end{enumerate}
\end{proposition}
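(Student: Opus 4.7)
All four parts flow from the single observation that $\adP{s}$ is cut out by the same primitive normals $a_i$ as $P$, with right-hand sides shifted from $b_i$ to $b_i+s$. I would prove them in the order (1), (4), (2), (3), since (1) sets up the facet structure, (4) is a formal scaling check, (2) carries the main technical content, and (3) reuses the estimates from (2).

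Part (1) is essentially a definition check: every facet of a polytope arises from one of its defining inequalities being tight, and for $\adP{s}$ the tight set of the $i$-th inequality is $\{x \in \adP{s} : \pro{a_i}{x} = b_i+s\} = \ad{F_i}{s}$; some of the $\ad{F_i}{s}$ may fail to have dimension $n-1$ and so not actually be facets, but every facet is forced to be of this form. Part (4) unravels directly, as both $r\,\adP{s}$ and $\ad{(rP)}{rs}$ are cut out by $\pro{a_i}{x} \geq rb_i+rs$.

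The substantive step is (2), and the main obstacle is controlling the contribution of redundant inequalities. By (1), the facets of $\adP{s}$ are the $\ad{F_i}{s}$ for $i$ in some subset $I^* \subseteq \{1,\dots,m\}$, carrying the primitive normals $a_i$. Hence $d_{\adP{s}}(x) = \min_{i \in I^*}(\pro{a_i}{x}-(b_i+s))$ while $d_P(x)-s = \min_{i=1,\dots,m}(\pro{a_i}{x}-(b_i+s))$, and the inequality $d_{\adP{s}}(x) \geq d_P(x)-s$ is immediate. For the reverse direction I would argue: for $i \notin I^*$ the inequality $\pro{a_i}{x} \geq b_i+s$ is redundant for $\adP{s}$, so Farkas' lemma furnishes $\mu_j \geq 0$ ($j \in I^*$) with $a_i = \sum_j \mu_j a_j$ and $\sum_j \mu_j(b_j+s) \geq b_i+s$. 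The key step is $\sum_j \mu_j > 1$ (for $s>0$; the case $s=0$ is trivial): evaluating $a_i = \sum \mu_j a_j$ at a point $x^* \in \relint(F_i)$ gives $b_i = \sum \mu_j \pro{a_j}{x^*} > \sum \mu_j b_j$ strictly, since $x^*$ avoids every other facet of $P$; combining with the Farkas shift $\sum \mu_j b_j + s\sum \mu_j \geq b_i+s$ forces $s(\sum \mu_j - 1) > 0$. Expanding $\pro{a_i}{x}-(b_i+s) = \sum_j \mu_j(\pro{a_j}{x}-(b_j+s)) + \bigl[\sum_j \mu_j(b_j+s)-(b_i+s)\bigr]$ and bounding the first term by $(\sum \mu_j)\,d_{\adP{s}}(x)$ and the bracket by $0$ then yields $\pro{a_i}{x}-(b_i+s) \geq d_{\adP{s}}(x)$, as desired. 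The same calculation is strict when $x \in \intr(\adP{s})$, so any facet $F$ of $P$ realizing the minimum in $d_P(x)$ must satisfy $F \in I^*$, proving the second assertion of (2).

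For (3), the inclusion $\ad{P}{s+r} \subseteq \ad{(\adP{s})}{r}$ is immediate because $I^* \subseteq \{1,\dots,m\}$. For the reverse inclusion I reuse the Farkas data from (2): for $i \notin I^*$ and $x \in \ad{(\adP{s})}{r}$, the bounds $\pro{a_j}{x} \geq b_j+s+r$ for $j \in I^*$ combine via $a_i = \sum \mu_j a_j$ and $\sum \mu_j \geq 1$ to give $\pro{a_i}{x} \geq \sum \mu_j(b_j+s) + r\sum \mu_j \geq b_i+s+r$, placing $x$ in $\ad{P}{s+r}$.
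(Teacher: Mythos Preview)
Your argument is correct, but it takes a genuinely different route from the paper's proof. The paper handles (2) by a direct geometric trick: for $x \in \intr(\adP{s})$ with $d_P(x)=d_F(x)$, it sets $\lambda := s/d_F(x)$ and observes that the whole set $\lambda x + (1-\lambda)F$ lies in $\ad{F}{s}$, which forces $\ad{F}{s}$ to be $(n-1)$-dimensional and hence a facet. Both inequalities in $d_{\adP{s}}(x)=d_P(x)-s$ then drop out in one line each, and (3) is an immediate corollary of the distance identity in (2): $x \in \ad{(\adP{s})}{r} \Leftrightarrow d_{\adP{s}}(x) \ge r \Leftrightarrow d_P(x) \ge s+r$.

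Your approach instead attacks the redundancy of the inequalities $\pro{a_i}{x} \ge b_i+s$ for $i \notin I^*$ head-on via Farkas' lemma, and the key extra step is the evaluation at a point of $\relint(F_i)$ to force $\sum_j \mu_j > 1$. This is a nice observation, and the same Farkas certificate then serves double duty for (3). What you gain is an explicit, reusable explanation of \emph{why} a dropped facet can never realize the minimum distance; what the paper gains is brevity and a completely elementary argument that avoids LP duality. Note also that once you have (2) in hand, your separate Farkas computation for (3) is unnecessary: (3) follows from (2) exactly as in the paper.
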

\begin{proof}
 (\ref{item:adj1}) follows directly from the definition. For
  (\ref{item:adj2}), we first prove the second statement. Let $x \in
  \intr(\adP{s})$, and let $F$ be a facet of $P$ with
  $d_P(x)=d_F(x)$. If we set $\lambda := s/d_F(x)$, we have $\lambda x
  + (1-\lambda) F \subseteq \ad{F}{s}$: all elements $y$ of the left
  hand side satisfy $d_F(y)=s$ and $d_G(y) \ge s$ for facets $G$ of
  $P$ other than $F$. This shows that $\ad{F}{s}$ is indeed
  $(n-1)$-dimensional.

  This also shows that $d_P(x) = d_F(x) = d_{\ad{F}{s}}(x)+s \ge
  d_{\Ps}(x)+s$.  On the other hand, pick a facet $G$ of $P$ so that
  $\ad{G}{s}$ is a facet of $\ad{P}{s}$, and so that $d_{\ad{G}{s}}(x)
  = d_{\ad{P}{s}}(x)$. Then $d_P(x) \leq d_G(x) = d_{\ad{G}{s}}(x) + s
  = d_{\ad{P}{s}}(x) + s$.

  Finally, if $x$ sits on the boundary of $\Ps$, then the desired
  equality reads $0=0$.

  Now (\ref{item:adj3}) follows directly from (\ref{item:adj2}), and
 (\ref{item:adj4}) is immediate from the definition.
\end{proof}

\subsection{The $\Q$-codegree} 
We now define the invariant we are most interested in. The reciprocal is used to 
keep the notation consistent with already existing algebro-geometric terminology.

\begin{definition}\label{def:setting}
We define the {\em $\Q$-codegree} of $P$ as 
\[\qd(P) := (\sup\{s > 0 \,:\, \ad{P}{s} \not=\emptyset\})^{-1},\]
and the {\em core} of $P$ is $\core(P) := \adP{1/\qd(P)}$.
\end{definition}

As the following proposition shows, the supremum is actually a maximum. Moreover, 
since $P$ is a rational polytope, $\qd(P)$ is a positive rational number.

\begin{proposition}\label{prop:equi}
The following quantities coincide:
\begin{enumerate}
\item\label{item:qcd1} $\qd(P)$
\item\label{item:qcd2} $(\max\{s > 0 \,:\, \ad{P}{s} \not=\emptyset\})^{-1}$
\item\label{item:qcd3} $(\sup\{s > 0 \,:\, \dim(\ad{P}{s})=n\})^{-1}$
\item\label{item:qcd4} $\min\{p/q > 0\,:\, p,q \in \Z_{>0}, \, \ad{(p P)}{q} \not=\emptyset\}$
\item\label{item:qcd5} $\inf\{p/q > 0 \,:\, p,q \in \Z_{>0}, \, \dim(\ad{(p P)}{q})=n\}$
\item\label{item:qcd6} $\min\{p/q > 0\,:\, p,q \in \Z_{>0}, \, \ad{(p P)}{q} \cap \Z^n \not=\emptyset\}$
\end{enumerate}
Moreover, $\core(P)$ is a rational polytope of dimension $< n$.
\end{proposition}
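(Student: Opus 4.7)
The plan is to show that all six quantities equal $1/s^*$, where
\[
s^* := \sup\bigl\{s > 0 : \adP{s} \neq \emptyset\bigr\},
\]
and that $s^*$ is a positive rational number attained as a maximum. The dimension statement for $\core(P)$ will drop out along the way.

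To establish (\ref{item:qcd1})=(\ref{item:qcd2}), I would use compactness: $\adP{s} \subseteq P$ whenever $s \geq 0$ and $P$ is bounded, so any sequence $s_k \uparrow s^*$ with $x_k \in \adP{s_k}$ has a convergent subsequence whose limit $x^*$ satisfies $A x^* \geq b + s^* \eins$, putting $x^* \in \adP{s^*}$. Rationality of $s^*$ is then immediate, since $s^*$ is the optimal value of the linear program $\max\{s : Ax - s\eins \geq b\}$ with integral/rational data.

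Next, for (\ref{item:qcd1})=(\ref{item:qcd3}) together with the $\core(P)$ statement, the content is that $\dim \adP{s} = n$ for every $s < s^*$ while $\dim \adP{s^*} < n$. For the former I would fix any $x \in \adP{s^*}$, set $M := \max_i \|a_i\|$, and observe that every $y$ with $\|y - x\| \leq r/M$ satisfies $\pro{a_i}{y} \geq b_i + s^* - r$, so a full-dimensional ball lies in $\adP{s^* - r}$ for every $r > 0$. For the latter, were $\dim \adP{s^*}$ equal to $n$, any interior point would satisfy $A x > b + s^* \eins$ strictly and thus lie in $\adP{s^* + \eps}$ for some $\eps > 0$, contradicting maximality of $s^*$. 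Since $s^* \in \Q$, the polytope $\adP{s^*}$ is rational.

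The remaining equalities are bookkeeping from Proposition~\ref{prop:elem}(\ref{item:adj4}), which gives $\ad{(pP)}{q} = p \cdot \adP{q/p}$. Hence $\ad{(pP)}{q}$ is nonempty iff $q/p \leq s^*$ and full-dimensional iff $q/p < s^*$. Writing $s^* = q_0/p_0$ with $p_0, q_0 \in \Z_{>0}$ then yields (\ref{item:qcd1})=(\ref{item:qcd4}), attained at $(p_0, q_0)$, and (\ref{item:qcd3})=(\ref{item:qcd5}), an infimum over rationals strictly greater than $\qd(P)$ which still equals $\qd(P)$. For (\ref{item:qcd4})=(\ref{item:qcd6}) the inequality ``$\geq$'' is immediate from $\ad{(pP)}{q} \cap \Z^n \subseteq \ad{(pP)}{q}$; for the reverse I would pick a vertex $v$ of the rational polytope $\adP{s^*}$, choose an integer $N \geq 1$ with $Np_0 v \in \Z^n$, and set $p := Np_0$, $q := Nq_0$, giving $p/q = \qd(P)$ and $pv \in p \cdot \adP{s^*} = \ad{(pP)}{q}$ a lattice point. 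The one genuinely non-formal step is the openness argument in paragraph three; the rest is LP duality and denominator clearing.
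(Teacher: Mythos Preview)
Your proof is correct and follows essentially the same approach as the paper: the scaling identity from Proposition~\ref{prop:elem}(\ref{item:adj4}) handles (\ref{item:qcd4}), (\ref{item:qcd5}), (\ref{item:qcd6}), and the equivalence (\ref{item:qcd1})=(\ref{item:qcd3}) rests on the observation that $\adP{s}$ is full-dimensional if and only if $\adP{s+\epsilon}\neq\emptyset$ for some $\epsilon>0$. The paper's proof is terser and leaves implicit several points you spell out explicitly---the compactness/LP argument that the supremum is a rational maximum, and the vertex-scaling trick producing a lattice point for (\ref{item:qcd6})---but the underlying ideas are the same.
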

\begin{proof}
  (\ref{item:qcd1}), (\ref{item:qcd2}), (\ref{item:qcd4}), and
 (\ref{item:qcd6}) coincide by
  Proposition~\ref{prop:elem}(\ref{item:adj4}).  For the remaining
  statements, note that for $s > 0$, the adjoint polytope $\ad{P}{s}$
  contains a full-dimensional ball if and only if there exists some
  small $\epsilon > 0$ such that $\ad{P}{s+\epsilon} \not=\emptyset$.
\end{proof}

\subsection{The codegree}

The $\Q$-codegree is a rational variant of the {\em codegree}, which
came up in Ehrhart theory of lattice polytopes \cite{BN07}. However,
the definition also makes sense for rational polytopes.
\begin{definition}\label{def:codegree}
  Let $P$ be a rational polytope. We define the {\em codegree} as
\[\cd(P) := \min\{k \in \N_{\ge 1} \,:\, \intr(kP) \cap \Z^n \not=\emptyset\}.\]
\end{definition}
\begin{lemma}
  Let $l$ be the common denominator of all right hand sides $b_i$
  given in the inequality description of $P$ as in
  \eqref{eq:inequalities} of Definition~\ref{inequalities}. Then
  \[\intr(l P) \cap \Z^n = \ad{(l P)}{1} \cap \Z^n.\]
  In particular, $\qd(P) \leq l \,\cd(P)$.
\end{lemma}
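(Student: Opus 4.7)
The first equality is essentially a statement about integrality. The inequality description of $lP$ is $\pro{a_i}{x} \geq lb_i$, and by definition of $l$ as the common denominator of the $b_i$, every right hand side $lb_i$ lies in $\Z$. Since each $a_i \in (\Z^n)^*$ is a lattice functional, the pairing $\pro{a_i}{x}$ also lies in $\Z$ for every $x \in \Z^n$. Therefore, on lattice points, the strict inequality $\pro{a_i}{x} > lb_i$ is equivalent to the non-strict integer inequality $\pro{a_i}{x} \geq lb_i + 1$. Intersecting over all facet inequalities, the first description yields $\intr(lP) \cap \Z^n$, and the second yields $\ad{(lP)}{1} \cap \Z^n$, so the two sets coincide.

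For the ``in particular'' statement I would invoke Proposition~\ref{prop:equi}(\ref{item:qcd6}), which characterizes $\qd(P)$ as the infimum of the ratios $p/q$ with $p, q \in \Z_{>0}$ for which $\ad{(pP)}{q}$ contains a lattice point. Set $k := \cd(P)$ and choose $y \in \intr(kP) \cap \Z^n$, so that $\pro{a_i}{y} > kb_i$ for every $i$. Scaling by $l$ yields the lattice point $ly$ with $\pro{a_i}{ly} = l\pro{a_i}{y} > lkb_i$; since both sides are now integers (again using $lb_i \in \Z$), this upgrades to $\pro{a_i}{ly} \geq lkb_i + 1$, i.e.\ $ly \in \ad{(lkP)}{1} \cap \Z^n$. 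Applying Proposition~\ref{prop:equi}(\ref{item:qcd6}) with $p = lk$ and $q = 1$ gives $\qd(P) \leq lk = l\,\cd(P)$, as claimed. The proof is essentially bookkeeping; the only subtlety is remembering to clear the denominator $l$ before invoking integrality in the second step.
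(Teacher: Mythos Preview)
Your proof is correct and follows the same approach as the paper. The paper's argument for the set equality is identical to yours (integer strict inequality upgrades to $\geq lb_i+1$), and for the ``in particular'' the paper simply cites Proposition~\ref{prop:equi}(\ref{item:qcd6}) without spelling out the scaling-by-$l$ step that you make explicit; your added detail is exactly what is needed to fill that gap.
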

\begin{proof}
  Let $x \in\intr(l P) \cap \Z^n$. Then $\Z \ni \pro{a_i}{x} > l b_i
  \in \Z$ for all $i = 1, \ldots, m$.  Hence, $\pro{a_i}{x} \geq l b_i
  + 1$, as desired. The other inclusion is clear.  The last statement
  follows from Proposition ~\ref{prop:equi} (\ref{item:qcd6}).
\end{proof}
Note that for a \emph{lattice} polytope $P$, we automatically have
$l=1$, so
\[\qd(P) \leq \cd(P) \leq n+1,\]
where the last inequality is well-known (take the sum of $n+1$
affinely independent vertices of $P$). 

\subsection{The nef value}

The third invariant we are going to define is a finite number only if
the polytope is not too singular. Let us make this precise.

\begin{definition} \label{normal}
A rational cone $\sigma \subset (\R^n)^*$ with primitive generators
$v_1, \ldots, v_m \in (\Z^n)^*$
is \emph{$\Q$-Gorenstein of index $r_\sigma$} if there is a primitive
point $u_\sigma \in \Z^n$ with $\pro{v_i}{u_\sigma} = r_\sigma$
for all $i$.

The normal fan $\NF(P)$ of $P$ is \emph{$\Q$-Gorenstein of index $r$}
if the maximal cones 
% $\sigma_1, \ldots, \sigma_l$ 
are $\Q$-Gorenstein
%with indices $r_1, \ldots, r_l$ respectively, 
and
$r=\lcm(r_\sigma \ : \ \sigma \in \NF(P))$.

Such a cone/fan is called {\em Gorenstein}, if the index is $1$.
Moreover, we say that $P$ is {\em smooth}, if for any maximal cone of
$\NF(P)$ the primitive ray generators form a lattice basis. Clearly, $P$ smooth implies $\NF(P)$ Gorenstein.
\end{definition}
In other words, $\NF(P)$ is $\Q$-Gorenstein, if the primitive ray
generators of any maximal cone lie in an affine hyperplane, and the
index equals the least common multiple of the lattice distance of
these hyperplanes from the origin. For instance, any {\em simple}
polytope is $\Q$-Gorenstein because every cone in the normal fan is
simplicial.

\begin{definition} 
  The {\em nef value} of $P$ is given as
  \[\nf(P) := (\sup\{s > 0 \,:\, \NF(\ad{P}{s}) = \NF(P)\})^{-1} \in
  \R_{>0} \cup \{\infty\}.\]
\end{definition}
Note that in contrast to the definition of the $\Q$-codegree, here 
the supremum is never a maximum. 
\begin{definition} \label{def:QGorenstein} Assume $\NF(P)$ is
  $\Q$-Gorenstein, and $v$ is a vertex of $P$. Assume that in the
  inequality description of $P$ as in \eqref{eq:inequalities} of
  Definition~\ref{inequalities}, the vertex $v$ satisfies equality
  precisely for $i \in I$. That is, the normal cone of $v$ is $\sigma
  = \pos ( a_i \ : \ i \in I )$.  For $s \geq 0$, define the point
  $v(s)$ by $v(s) = v + \frac{s}{r_\sigma} u_{\sigma}$, where
  $u_\sigma$ and $r_\sigma$ are defined in Definition~\ref{normal}.
  Note that $\pro{a_i}{v(s)} = b_i+s$ for $i \in I$.
\end{definition}

The following lemma collects various ways to compute the nef value
$\tau$ of a polytope, if the normal fan is $\Q$-Gorenstein.
\begin{lemma} \label{lemma:QGorenstein}
$\NF(P)$ is $\Q$-Gorenstein if and only if $\nf(P) < \infty$. Assume this condition holds. 
Then, for $s \in [0,\nf(P)^{-1}]$ we have $\adP{s} = \conv( v(s) \, :
\, v \text{ vertex of } P )$.
Consequently, the following quantities coincide:
\begin{enumerate}
\item\label{item:lemma:QGorenstein1} $\nf(P)^{-1}$
\item\label{item:lemma:QGorenstein2} $\max\{s \in \Q_{>0} \,:\, v(s) \in \ad{P}{s}$ for all vertices
  $v$ of $P \}$
\item\label{item:lemma:QGorenstein3} $\min\{s \in \Q_{>0} \,:\, v(s) = v'(s)$ for two different
  vertices $v,v'$ of $P \}$
\item\label{item:lemma:QGorenstein4} $\min\{s \in \Q_{>0} \,:\, \ad{P}{s}$ is combinatorially different from
  $P \}$
\item\label{item:lemma:QGorenstein5} $\max\{s \in \Q_{>0} \,:\, \NF(P)$ refines $\NF(\ad{P}{s}) \}$
\end{enumerate}
\end{lemma}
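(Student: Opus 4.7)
The plan is to handle the biconditional ``$\NF(P)$ is $\Q$-Gorenstein iff $\nf(P)<\infty$'', the identity $\adP{s}=\conv(v(s):v\text{ vertex})$ on $[0,\nf(P)^{-1}]$, and the coincidence of (1)--(5) in turn. For the biconditional, the forward direction is constructive: assuming $\Q$-Gorenstein, for each vertex $v$ with normal cone $\sigma_v=\pos(a_i:i\in I)$ and associated pair $(u_{\sigma_v},r_{\sigma_v})$, the point $v(s):=v+(s/r_{\sigma_v})u_{\sigma_v}$ satisfies $\pro{a_i}{v(s)}=b_i+s$ for every $i\in I$. The strict inequalities $\pro{a_i}{v}>b_i$ at the remaining indices persist for small $s>0$ by continuity, so $v(s)$ is a vertex of $\adP{s}$ with normal cone $\sigma_v$; hence $\NF(\adP{s})=\NF(P)$ on a right neighborhood of zero, giving $\nf(P)<\infty$. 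Conversely, pick a rational $s^{*}>0$ with $\NF(\adP{s^{*}})=\NF(P)$, locate the unique vertex $w_v$ of $\adP{s^{*}}$ sharing the normal cone $\sigma_v$, and scale $(w_v-v)/s^{*}$ to a primitive integer vector to recover $u_{\sigma_v}$ and $r_{\sigma_v}$.

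For the identity, set $T:=\nf(P)^{-1}$. On $[0,T)$ the equality $\NF(\adP{s})=\NF(P)$ gives a bijection of vertex sets preserving normal cones; the vertex of $\adP{s}$ with cone $\sigma_v$ must solve the affine system $\pro{a_i}{\cdot}=b_i+s$ for $i$ in the tight index set of $v$. Uniqueness is immediate because the corresponding $a_i$ span $\R^n$ (as $\sigma_v$ is full-dimensional). Consistency comes from $\Q$-Gorenstein: pairing any linear dependency $\sum\mu_i a_i=0$ on the tight index set with $u_{\sigma_v}$ forces $\sum\mu_i=0$, which is exactly the compatibility condition for the shifted system. The unique solution is $v(s)$, so $\adP{s}=\conv(v(s):v\text{ vertex})$ on $[0,T)$; the identity extends to $s=T$ because both sides depend continuously on $s$.

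For the coincidence of (1)--(5), set $T:=\nf(P)^{-1}$. The equality (1)$=$(4) is essentially the definition, since combinatorial type is recorded by the normal fan. From the identity just established, $v(T)\in\adP{T}$ for every vertex $v$, proving $T\le$(2); and the $v(s)$ are distinct vertices of $\adP{s}$ for $s<T$, so $T\le$(3). At $s=T$, $\NF(\adP{T})$ is strictly coarser than $\NF(P)$ (by definition of $T$), so $\conv(v(T):v)=\adP{T}$ has strictly fewer than $|V(P)|$ distinct vertices; this forces a collision $v(T)=v'(T)$ for some $v\ne v'$, yielding (3)$\le T$, and by linearity of $v(s)$ in $s$ together with the monotonicity of $\adP{s}$ in $s$, the collision at $T$ immediately pushes at least one of the colliding trajectories out of $\adP{s}$ for $s>T$, giving (2)$\le T$. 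Finally (5)$\ge T$ is trivial, while (5)$\le T$ uses that as $s$ crosses $T$ some facet of $\adP{s}$ degenerates so that its inward normal ray disappears from $\NF(\adP{s})$, obstructing the refinement relation. The main technical steps are (2)$\le T$ and (5)$\le T$, which both require a careful analysis of the facet/vertex incidence structure at the first critical value $s=T$.
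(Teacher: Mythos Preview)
Your argument is considerably more detailed than the paper's (which essentially asserts the key biconditional ``$\NF(P)=\NF(\adP{s})$ iff all $v(s)$ are distinct'' and reads off (1)$\Leftrightarrow$(3)$\Leftrightarrow$(4), then treats (2) and (5) in two lines). Most of what you write is correct, but there is a genuine error in your treatment of (5)$\le T$, and your argument for (2)$\le T$ is shakier than it needs to be.

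\textbf{The error for (5)$\le T$.} You claim that when a facet of $\adP{s}$ degenerates and its inward normal ray disappears from $\NF(\adP{s})$, this ``obstructs the refinement relation''. This is backwards: ``$\NF(P)$ refines $\NF(\adP{s})$'' means $\NF(P)$ is the \emph{finer} fan, so $\NF(P)$ having extra rays is precisely what refinement allows. Indeed, in dimension two the rays of $\NF(\adP{s})$ are always a subset of those of $\NF(P)$, and a two-dimensional complete fan is determined by its rays; hence $\NF(P)$ refines $\NF(\adP{s})$ for \emph{every} $s$ with $\adP{s}\neq\emptyset$. For a non-$\Q$-normal polygon (such as the right-hand example in Figure~\ref{fig:start}) this makes the quantity in (5), read literally, equal to $\qd(P)^{-1}>\nf(P)^{-1}$. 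So not only is your argument for (5)$\le T$ wrong, the inequality itself fails for the stated formulation of (5); the paper's own proof does not actually address (5) separately either. You should flag this rather than try to push the argument through.

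\textbf{The sketch for (2)$\le T$.} Your phrase ``the collision at $T$ immediately pushes at least one of the colliding trajectories out of $\adP{s}$'' is not justified by ``linearity and monotonicity'' alone. A clean way to close this: suppose $v(s)\in\adP{s}$ for all vertices $v$ at some $s>T$. Each $v(s)$ is then a vertex of $\adP{s}$ (the constraints indexed by $I_v$ already pin it down to a point), with normal cone containing $\sigma_v$. Since the $v(s)$ are pairwise distinct (two affine trajectories that agree at $s=T$ and at some $s>T$ would coincide identically, forcing $v=v'$), their normal cones have disjoint interiors; a covering/volume comparison with the $\sigma_v$ then forces each normal cone to equal $\sigma_v$, whence $\NF(\adP{s})=\NF(P)$, contradicting $s>T$. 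This is essentially the computation underlying the paper's asserted biconditional, and it also cleanly gives (3)$\le T$ without the unjustified step ``$\NF(\adP{T})$ is strictly coarser''.
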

\begin{proof}
The first assertion follows by Definition \ref{def:QGorenstein}. Notice that  $\NF(P)=\NF(\ad{P}{s})$ if and only if 
$v(s)\neq v'(s)$ for any  two different
  vertices $v,v'$ of $P.$ This implies the assertions \ref{item:lemma:QGorenstein1}$\Leftrightarrow$\ref{item:lemma:QGorenstein3}$\Leftrightarrow$\ref{item:lemma:QGorenstein4}. 
  Let now $\xi=\max\{s \in \Q_{>0} \,:\, v(s) \in \ad{P}{s}\}.$ As remarked in Definition \ref{def:QGorenstein} it is
  $\nf(P)^{-1}\leq\xi.$ On the other hand the existence  of an $s\in\Q$ such that  $\xi<s<\nf(P)^{-1}$ would lead 
  to a contradiction. In fact it would imply that $\NF(P)=\NF(\ad{P}{s})$ and the existence of a vertex $v\in P$ for which $v(s)\not\in\ad{P}{s}.$
  This proves  \ref{item:lemma:QGorenstein1}$\Leftrightarrow$\ref{item:lemma:QGorenstein2}$\Leftrightarrow$\ref{item:lemma:QGorenstein5}.
\end{proof}

Figure~\ref{fig:nqgs} shows a three-dimensional lattice polytope $P$
whose normal fan is not $\Q$-Gorenstein ($\nf(P) = \infty$).
Note that $P$ has $5$ vertices, while the adjoint polytope $\adP{c}$
(for $0 < c < \frac{1}{\qd(P)}$) has $6$ vertices.
\begin{figure}[ht]
  \centering
    \begin{tikzpicture}[perspective nqgs,scale=.27]
    % outer vertices
    \coordinate (a) at (  0,   0,   0);%
    \coordinate (b) at (  0,  10,   0);%
    \coordinate (c) at (  5,   0,   0);%
    \coordinate (d) at (  0,   0,  10);%
    \coordinate (e) at (  5,   0,  10);%

    % inner vertices
    \coordinate (f) at (  4,  1,  1);%
    \coordinate (g) at (  1,  7,  1);%
    \coordinate (h) at (  1,  1,  1);%
    \coordinate (i) at (  1,  1,  8);%
    \coordinate (j) at (  1,  7,  2);%
    \coordinate (k) at (  4,  1,  8);%

    % outer backside edges
    \draw [line width=1.2pt,color=black!60] (a) -- (c);%
    \draw [line width=1.2pt,color=black!60] (b) -- (c);%
    \draw [line width=1.2pt,color=black!60] (c) -- (e);%

    % inner backside edges
    \draw [line width=1.2pt,color=gray] (f) -- (g);%
    \draw [line width=1.2pt,color=gray] (f) -- (h);%
    \draw [line width=1.2pt,color=gray] (f) -- (k);%

    % inner faces
    \fill [fill=red!50,opacity=.3] (i) -- (j) -- (k);
    \fill [fill=red!70,opacity=.3] (g) -- (h) -- (i) -- (j);

    % inner vertices and edges
    \draw [line width=1.2pt,color=black!80] (g) -- (h);%
    \draw [line width=1.2pt,color=black!80] (g) -- (j);%
    \draw [line width=1.2pt,color=black!80] (h) -- (i);%
    \draw [line width=1.2pt,color=black!80] (i) -- (j);%
    \draw [line width=1.2pt,color=black!80] (i) -- (k);%
    \draw [line width=1.2pt,color=black!80] (j) -- (k);%
    \fill (f) [red] circle (6pt);%
    \fill (g) [red] circle (6pt);%
    \fill (h) [red] circle (6pt);%
    \fill (i) [red] circle (6pt);%
    \fill (j) [red] circle (6pt);%
    \fill (k) [red] circle (6pt);%

    % inner faces
    \fill [fill=blue!30,opacity=.3] (a) -- (b) -- (d);
    \fill [fill=blue!50,opacity=.3] (b) -- (d) -- (e);

    % outer vertices and edges
    \draw [line width=1.2pt] (a) -- (b);%
    \draw [line width=1.2pt] (a) -- (d);%
    \draw [line width=1.2pt] (b) -- (d);%
    \draw [line width=1.2pt] (b) -- (e);%
    \draw [line width=1.2pt] (d) -- (e);%
    \fill (a) [blue] circle (6pt);%
    \fill (b) [blue] circle (6pt);%
    \fill (c) [blue] circle (6pt);%
    \fill (d) [blue] circle (6pt);%
    \fill (e) [blue] circle (6pt);%
  \end{tikzpicture}
  \caption{$\adP{1/5} \subseteq P$ for a $3$-dimensional lattice polytope $P$}
  \label{fig:nqgs}
\end{figure}
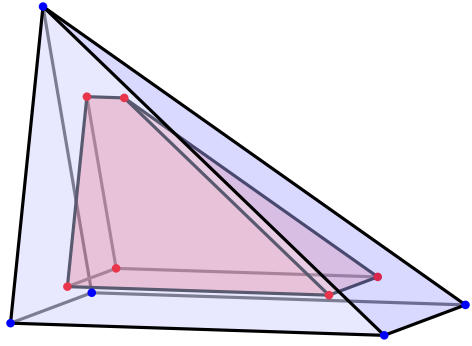

By definition, we have $\qd(P) \leq \nf(P)$. We also want to compare
the codegree and the nef value.
\begin{proposition}
Let $P$ be a lattice polytope with $\Q$-Gorenstein normal fan of index
$r$.
If $s \geq r \nf(P)$ is an integer, then $\ad{(s P)}{r}$ is a lattice
polytope. In particular, 
\[\cd(P) - 1 < r \nf(P).\]
\label{estimate}
\end{proposition}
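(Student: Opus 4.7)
The plan is to combine the scaling identity $\ad{(sP)}{r} = s\cdot\adP{r/s}$ from Proposition~\ref{prop:elem}(\ref{item:adj4}) with the explicit vertex description of adjoint polytopes in the $\Q$-Gorenstein regime furnished by Lemma~\ref{lemma:QGorenstein}. The key combinatorial fact that makes everything work integrally after rescaling is that $r/r_\sigma \in \Z$ for every maximal cone $\sigma$ of $\NF(P)$, which is precisely the reason the index is defined as the least common multiple of the local indices.

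First I would show that $\ad{(sP)}{r}$ is a lattice polytope. Since $s \geq r\,\nf(P)$, we have $r/s \leq \nf(P)^{-1}$, so Lemma~\ref{lemma:QGorenstein} gives
\[
\adP{r/s} \;=\; \conv\bigl(v(r/s) \,:\, v \text{ vertex of } P\bigr),
\qquad
v(r/s) \;=\; v + \tfrac{r/s}{r_\sigma}\,u_\sigma,
\]
where $\sigma$ denotes the normal cone of $P$ at $v$. Rescaling by $s$ via Proposition~\ref{prop:elem}(\ref{item:adj4}) yields
\[
\ad{(sP)}{r} \;=\; s\cdot\adP{r/s} \;=\; \conv\bigl(\,sv + \tfrac{r}{r_\sigma}\,u_\sigma \,:\, v \text{ vertex of } P\bigr).
\]
Each $sv$ is a lattice point because $P$ is a lattice polytope and $s \in \Z$; the primitive vector $u_\sigma$ lies in $\Z^n$ by Definition~\ref{normal}; and $r/r_\sigma \in \Z$ by the definition of the index. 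Hence every generator of the convex hull is integral, so $\ad{(sP)}{r}$ is a lattice polytope.

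For the codegree inequality, I would take $s := \lceil r\,\nf(P)\rceil$, the smallest admissible integer. Since $\qd(P) \leq \nf(P)$, we have $r/s \leq \nf(P)^{-1} \leq \qd(P)^{-1}$, so $\adP{r/s}$ is non-empty by Proposition~\ref{prop:equi}, and therefore so is the lattice polytope $\ad{(sP)}{r}$. Because $r > 0$, every point of $\ad{(sP)}{r}$ satisfies $d_{sP}(x) \geq r > 0$ and hence lies in $\intr(sP)$; any vertex of this non-empty lattice polytope is therefore a lattice point in $\intr(sP)$. This gives $\cd(P) \leq s = \lceil r\,\nf(P)\rceil < r\,\nf(P) + 1$, equivalently $\cd(P) - 1 < r\,\nf(P)$.

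There is no genuinely hard step: the content is entirely the integrality of $r/r_\sigma$, and the rest is bookkeeping with the identities in Proposition~\ref{prop:elem} and Lemma~\ref{lemma:QGorenstein}. The only mild subtlety is making sure Lemma~\ref{lemma:QGorenstein} is applicable at the closed endpoint $r/s = \nf(P)^{-1}$, which is needed to handle the edge case where $r\,\nf(P)$ itself happens to be an integer; this is covered by the closed interval $[0,\nf(P)^{-1}]$ appearing in the lemma.
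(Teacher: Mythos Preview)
Your proof is correct and follows essentially the same approach as the paper: use Lemma~\ref{lemma:QGorenstein} to describe the vertices of $\adP{r/s}$, rescale by $s$ via Proposition~\ref{prop:elem}(\ref{item:adj4}), and observe that $sv + \tfrac{r}{r_\sigma}u_\sigma$ is integral because $r/r_\sigma \in \Z$. For the codegree bound the paper argues contrapositively (if $\cd(P)-1 \ge r\,\nf(P)$ then $(\cd(P)-1)P$ would acquire an interior lattice point), whereas you argue directly with $s=\lceil r\,\nf(P)\rceil$; these are equivalent, and your version is in fact more explicit about non-emptiness and the closed endpoint $r/s=\nf(P)^{-1}$ than the paper's terse treatment.
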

\begin{proof}
By Lemma~\ref{lemma:QGorenstein} every vertex of $\adP{r/s}$ is
of the form $v(r/s) = v + \frac{r}{r_\sigma s} u_\sigma$ for some
vertex $v$ of $P$. Hence, every vertex of $\ad{(s P)}{r}$ is given as
$s v(r/s) = sv + \frac{r}{r_\sigma} u_\sigma$, a lattice point. For
the last statement it suffices to observe that ${(\cd(P)-1) P}$ does
not have interior lattice points.
\end{proof}

\subsection{The \emph{mountain} and $\Q$-normality}

Here is a graphical description of the nef value and the $\Q$-codegree. It also 
provides an efficient way to compute these invariants.

\begin{proposition}\label{prop:M(P)}
Let the \emph{mountain} $M(P) \subseteq \R^{n+1}$ be defined as
\[M(P) := \{(x,s) \,:\, x \in P,\; 0\leq s \leq d_P(x)\}.\]
Assume that $P$ has an inequality description as in 
\eqref{eq:inequalities} of Definition~\ref{inequalities}, then
\[M(P) = \{(x,s) \in \R^{n+1} \,:\, (A \,\mid - \eins)\,
(x,s)^\transpose \geq b, \; s \geq 0\}.\]
Therefore, $M(P)$ is a rational polytope with $M(P) \,\cap\,\ \R^n
\times \{s_0\} = \adP{s_0} \times \{s_0\}$. Moreover, 
\begin{align} \label{eqqq1}
  \qd(P)^{-1} =  \max(s \,:\, \textit{ there is a vertex of } M(P) \text{ with last coordinate } s)
\end{align}
If $\NF(P)$ is $\Q$-Gorenstein, then
\begin{equation}\label{eqqq2}
\nf(P)^{-1} =  \min(s > 0\,:\, \textit{ there is a vertex of } M(P) \text{ with last coordinate } s)
\end{equation}
\end{proposition}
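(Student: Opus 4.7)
My plan is to first establish the explicit inequality description of $M(P)$, and then read off both equations as statements about the last-coordinate functional on $M(P)$. The inequality description comes from unpacking the definition: $(x,s) \in M(P)$ iff $x \in P$ and $0 \le s \le d_P(x) = \min_i(\pro{a_i}{x}-b_i)$, equivalently the system $\pro{a_i}{x}-s \ge b_i$ for all $i$ together with $s \ge 0$ (once $s \ge 0$ the inequalities $\pro{a_i}{x} \ge b_i$ are automatic). This shows that $M(P)$ is a bounded rational polytope with the stated slice $M(P) \cap (\R^n \times \{s_0\}) = \adP{s_0} \times \{s_0\}$.

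Equation \eqref{eqqq1} is then immediate: the last coordinate is a linear functional on the polytope $M(P)$, so its maximum is attained at a vertex, and by the slice formula this maximum equals $\sup\{s > 0 : \adP{s} \ne \emptyset\} = \qd(P)^{-1}$.

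The bulk of the work lies in \eqref{eqqq2}. Under the $\Q$-Gorenstein hypothesis I want to show that no vertex of $M(P)$ has last coordinate strictly between $0$ and $\nf(P)^{-1}$, while at least one vertex does have last coordinate exactly $\nf(P)^{-1}$. Any vertex $(x_0, s_0)$ of $M(P)$ with $s_0 > 0$ must project to a vertex of $\adP{s_0}$ (otherwise a nontrivial convex decomposition in the slice lifts to one in $M(P)$), so by Lemma~\ref{lemma:QGorenstein} any candidate with $s_0 < \nf(P)^{-1}$ has the form $(v(s_0), s_0)$ for a vertex $v$ of $P$ with normal cone $\sigma$; the tight lifted inequalities at this point are exactly the $(a_i, -1)$ for $i \in I$ (the facets of $P$ through $v$), since the combinatorial type of $\adP{s_0}$ agrees with that of $P$. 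Because $\pro{a_i}{u_\sigma} = r_\sigma$ for $i \in I$, the nonzero vector $(u_\sigma, r_\sigma)$ lies in their common kernel, so these lifts span only an $n$-dimensional subspace and $(v(s_0), s_0)$ cannot be a vertex of the $(n+1)$-dimensional polytope $M(P)$. At $s_0 = \nf(P)^{-1}$ I would invoke Lemma~\ref{lemma:QGorenstein}(\ref{item:lemma:QGorenstein3}) to obtain distinct vertices $v \ne v'$ with $v(s_0) = v'(s_0) =: x_0$; now the tight set at $(x_0, s_0)$ contains the lifts for both cones $\sigma, \sigma'$, and if their combined span were still only $n$-dimensional, the two one-dimensional kernels would coincide, i.e., the \emph{adjunction directions} $u_\sigma/r_\sigma$ and $u_{\sigma'}/r_{\sigma'}$ would agree; but then the parallel rays $t \mapsto v + t\, u_\sigma/r_\sigma$ and $t \mapsto v' + t\, u_{\sigma'}/r_{\sigma'}$ meet at $t = s_0$, forcing them to coincide and $v = v'$, a contradiction.

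The main obstacle is this rank computation at $s_0 = \nf(P)^{-1}$: the $\Q$-Gorenstein hypothesis is used precisely to pin down the one-dimensional kernels of the lifted facet inequalities at each vertex, and the parallel-rays trick is what converts \emph{same kernel} into \emph{same vertex of $P$}.
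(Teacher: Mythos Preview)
Your proposal is correct. The approach is essentially the same as the paper's, though you work dually: where the paper argues via edges (each bottom vertex $(v,0)$ has a unique upward edge in direction $(u_\sigma,r_\sigma)$, and two such edges collide at height $\nf(P)^{-1}$, producing a vertex), you argue via the rank of tight facet inequalities. Your treatment is in fact more explicit on one point the paper leaves implicit, namely ruling out vertices at heights strictly between $0$ and $\nf(P)^{-1}$; your rank-$n$ computation for the tight set $\{(a_i,-1):i\in I\}$ makes this transparent, and your parallel-rays argument at height $\nf(P)^{-1}$ is exactly the dual of the paper's ``two edges meet'' observation.
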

\begin{proof}
  Abbreviate $q:=\qd(P)^{-1}$.  According to
  Proposition~\ref{prop:equi}(\ref{item:qcd2}), $q = \max\{s > 0 : \adP{s} \neq
  \emptyset \}$. By definition of $\adP{s}$, this is the maximal
  positive $s$ such that there is an $x \in P$ which satisfies $d_F(x)
  \ge s$ for all facets $F$ of $P$. This shows (\ref{eqqq1}).

Let us prove (\ref{eqqq2}). Suppose $\NF(P)$ is $\Q$-Gorenstein, and abbreviate $t:=\nf(P)^{-1}$.
For every vertex $v$ of $P$ and $s > 0$ define $v(s)$ as in
Definition~\ref{def:QGorenstein}.
At every vertex $(v,0)$ of the bottom facet $P \times \{0\}$ of
$M(P)$ there is a unique upwards edge towards $(v(s),s)$ for small
$s$. 
By Lemma~\ref{lemma:QGorenstein}(\ref{item:lemma:QGorenstein3})
there are two vertices $v$, $v'$ of $P$ so that $v(t) =
v'(t)$. The corresponding point $(v(t),t) = (v'(t),t)$ in $M(P)$ is a
vertex as it is incident to at least two edges.
\end{proof}

%%
%% ///(1)
%%
Let us consider the example given on the right hand side of
Figure~\ref{fig:start}, and take a look at its mountain, see Figure~\ref{fig:mon}. 
The height of the mountain equals the reciprocal of the $\Q$-codegree,
while the height of the first nontrivial vertex is the reciprocal of
the nef value.

\begin{figure}[ht]
  \includegraphics[height=2cm]{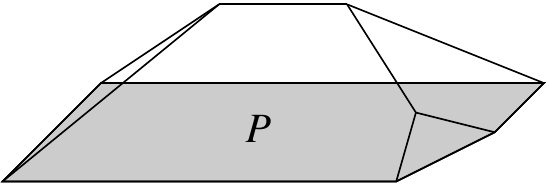}
\caption{The ``lattice distance mountain'' $M(P)$}
\label{fig:mon}
\end{figure}

% As one sees, from the second highest vertex to the top the mountain
% looks benign. 
This motivates the following definition (see \cite{DDP09}).

\begin{definition}
We say that $P$ is {\em $\Q$-normal}, if 
$\qd(P) = \nf(P)$.
\end{definition}
To get the correct intuition for this notion let us note that $P$ is $\Q$-normal if and only if all {\em vertices} of $P$ survive under polyhedral adjunction (as long as the adjoint polytope is full-dimensional). For $n \geq 3$ it is not enough that all {\em facets} of $P$ survive, as Figure~\ref{fig:pyramid} illustrates (where $\nf(P)^{-1} = 2$, $\qd(P)^{-1} = 6$ and $\core(P)$ is an interval). 
% However, this condition is equivalent to $\Q$-normality in the
% special case that $\core(P)$ is a point.
\begin{figure}[ht]
  \centering
    \begin{tikzpicture}[perspective adjusted,scale=.18]
      
    % outer vertices
    \coordinate (a) at (  0,   1,   1);%
    \coordinate (b) at (  0,   1,  -1);%
    \coordinate (c) at ( 22, -10,  12);%
    \coordinate (d) at ( 22, -10, -12);%
    \coordinate (e) at (-22, -10, -12);%
    \coordinate (f) at (-22, -10,  12);%

    % inner vertices
    \coordinate (g) at ( -2, -2,  0);%
    \coordinate (h) at (  2, -2,  0);%
    \coordinate (i) at ( 10, -6, -4);%
    \coordinate (j) at ( 10, -6,  4);%
    \coordinate (k) at (-10, -6, -4);%
    \coordinate (l) at (-10, -6,  4);%

    % backside edges of outer
    \draw [line width=1.5pt,color=gray] (d) -- (e);%
    \draw [line width=1.5pt,color=gray] (b) -- (e);%
    \draw [line width=1.5pt,color=gray] (f) -- (e);%

    % backside edges of inner
    \draw [thick,color=black!70] (g) -- (k);%
    \draw [thick,color=black!70] (i) -- (k);%
    \draw [thick,color=black!70] (k) -- (l);%

    % fill two faces of the inner
    \fill [fill=red!50,opacity=.7] (g) -- (h) -- (j) -- (l);
    \fill [fill=red!30,opacity=.7] (h) -- (i) -- (j);

    % vertices and edges of inner
    \draw [thick] (g) -- (h);%
    \draw [thick] (h) -- (i);%
    \draw [thick] (h) -- (j);%
    \draw [thick] (i) -- (j);%
    \draw [thick] (g) -- (l);%
    \draw [thick] (j) -- (l);%
    \fill (g) [red] circle (10pt);%
    \fill (h) [red] circle (10pt);%
    \fill (i) [red] circle (10pt);%
    \fill (j) [red] circle (10pt);%
    \fill (k) [red] circle (10pt);%
    \fill (l) [red] circle (10pt);%

    % outer faces
    \fill [fill=blue!50,opacity=.3] (a) -- (c) -- (f);
    \fill [fill=blue!30,opacity=.3] (a) -- (b) -- (d) -- (c);

    % outer vertices and edges
    \draw [line width=1.5pt] (a) -- (b);%
    \draw [line width=1.5pt] (a) -- (c);%
    \draw [line width=1.5pt] (b) -- (d);%
    \draw [line width=1.5pt] (a) -- (f);%
    \draw [line width=1.5pt] (c) -- (f);%
    \draw [line width=1.5pt] (d) -- (c);%
    \fill (a) [blue] circle (12pt);%
    \fill (b) [blue] circle (12pt);%
    \fill (c) [blue] circle (12pt);%
    \fill (d) [blue] circle (12pt);%
    \fill (e) [blue] circle (12pt);%
    \fill (f) [blue] circle (12pt);%
  \end{tikzpicture}
  \caption{$\adP 4 \subseteq P$ for a $3$-dimensional lattice polytope $P$}
  \label{fig:pyramid}
\end{figure}
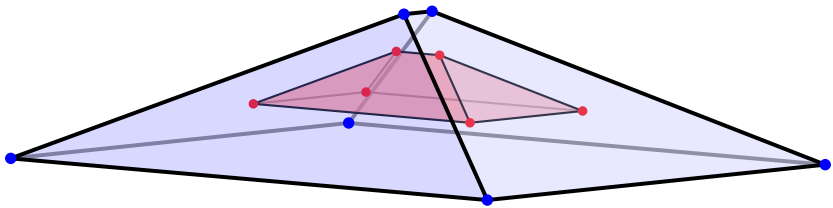
%\begin{figure}[ht]
%  \includegraphics[height=6cm]{pyramid9}
%\caption{$\adP{4} \subset P$ for a $3$-dimensional lattice polytope %$P$}
%\label{pyramid}
%\end{figure}

\section{Natural projections}\label{sec:natural}

Throughout let $P \subseteq \R^n$ be an $n$-dimensional rational polytope. 

\subsection{The core and the natural projection}

Recall that $\core(P) := \adP{1/\qd(P)}$ is a rational polytope of dimension $<n$.
\begin{definition}
Let $K(P)$ be the linear space parallel to $\aff(\core(P))$.
We call $\pi_P \colon \R^n \to \R^n/K(P)$ the \emph{natural projection\/}
associated with $P$.
\end{definition}

\begin{lemma}
Let $x \in \relint(\core(P))$. Let us denote by $F_1, \ldots, F_t$ the facets of $P$ with $d_{F_i}(x)
= \qd(P)^{-1}$. Then their primitive inner normals $a_1, \ldots, a_t$
positively span the linear subspace $K(P)^\perp$.

Moreover, if $\core(P) = \{x\}$, then
\[\{y \in \R^n \,:\, d_{F_i}(y) \geq 0 \text{ for all } i = 1, \ldots, t\}\]
is a rational polytope containing $P$.
\label{lemma:span}
\end{lemma}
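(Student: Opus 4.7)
The plan is to exploit the maximality of $s_0:=\qd(P)^{-1}$ in two steps, establishing the inclusions $\cone(a_1,\ldots,a_t)\subseteq K(P)^\perp$ and $\cone(a_1,\ldots,a_t)\supseteq K(P)^\perp$ separately, and then deducing the second assertion as a direct corollary of the first.

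First I would verify that each $a_i$ annihilates $K(P)$. Since $x\in\relint(\core(P))$ and $\core(P)=\adP{s_0}$, for every $v\in K(P)$ and all sufficiently small $\epsilon\in\R$ of either sign, the point $x+\epsilon v$ still lies in $\adP{s_0}$. Evaluating $d_{F_i}(x+\epsilon v)\geq s_0=d_{F_i}(x)$ gives $\epsilon\pro{a_i}{v}\geq 0$; letting $\epsilon$ take both signs forces $\pro{a_i}{v}=0$, so $a_i\in K(P)^\perp$ and hence $\cone(a_1,\ldots,a_t)\subseteq K(P)^\perp$.

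For the reverse inclusion, which I expect to be the main obstacle, I would argue by contradiction via polar duality in $\R^n$. Assume the containment is strict. Since the dual cone of the subspace $K(P)^\perp$ is $K(P)$ itself, and duality reverses strict containments of closed convex cones, we get $(\cone(a_1,\ldots,a_t))^\vee=\{v\in\R^n:\pro{a_i}{v}\geq 0\text{ for all }i\leq t\}\supsetneq K(P)$. Pick a $v$ in this dual cone with $v\notin K(P)$. For all sufficiently small $\epsilon>0$, each tight facet satisfies $d_{F_i}(x+\epsilon v)=s_0+\epsilon\pro{a_i}{v}\geq s_0$, while each non-tight facet $F$ satisfies $d_F(x+\epsilon v)>s_0$ by continuity since $d_F(x)>s_0$. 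Therefore $x+\epsilon v\in\adP{s_0}=\core(P)\subseteq x+K(P)$, forcing $\epsilon v\in K(P)$ and contradicting the choice of $v$.

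For the second assertion, suppose $\core(P)=\{x\}$, so $K(P)=\{0\}$ and hence by the first part $\cone(a_1,\ldots,a_t)=(\R^n)^*$. The polyhedron $Q:=\{y\in\R^n:\pro{a_i}{y}\geq b_i,\ i=1,\ldots,t\}$ then has trivial recession cone $\{v:\pro{a_i}{v}\geq 0\ \forall i\leq t\}=K(P)=\{0\}$, so $Q$ is a bounded rational polytope. Since its defining inequalities form a subset of the defining inequalities of $P$, we conclude $P\subseteq Q$.
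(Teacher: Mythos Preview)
Your proof is correct and follows essentially the same route as the paper's: the first inclusion via the observation that the tight affine functionals $d_{F_i}$ are constant along $\core(P)$, the reverse inclusion by picking $v\in\cone(a_i)^\vee\setminus K(P)$ and showing $x+\epsilon v\in\core(P)$ for small $\epsilon>0$, and boundedness from $\cone(a_i)=(\R^n)^*$. The only differences are cosmetic---you spell out the polar duality and use the recession cone for boundedness, whereas the paper phrases the separation step directly and argues boundedness via a ball around the origin in $\conv(a_1,\ldots,a_t)$.
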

\begin{proof}
We set $s := \qd(P)^{-1}$. Let $i \in \{1, \ldots, t\}$. Since
$d_{F_i}(x) = s$ and $x \in \relint(\ad{P}{s})$, we have $d_{F_i}(y) =
s$ for all $y \in \ad{P}{s}$.
This shows $C := \pos(a_1, \ldots, a_t) \subseteq K(P)^\perp$. 
Assume that this inclusion were strict. Then there exists some $v \in
\R^n$ such that $\pro{v}{C} \geq 0$ and $v$ does not vanish on the
linear subspace $K(P)^\perp$.
In particular, for any $i \in \{1, \ldots, t\}$ one gets $\pro{v}{a_i}
\geq 0$, so $d_{F_i}(x + \epsilon v) \geq d_{F_i}(x) = s$ for any
$\epsilon > 0$. Moreover, if we choose $\epsilon$ small enough, then
$d_{G}(x + \epsilon v) \approx d_G(x) > s$ for any other facet $G$ of
$P$. Hence, $x + \epsilon v \in \Ps$. But this means $v \in K(P)$, and
$v$ must vanish on $K(P)^\perp$, a contradiction.

Finally, notice that, if $\Ps = \{x\}$, then $a_1, \ldots, a_t$
positively span $(\R^n)^*$.
In particular, $\conv(a_1, \ldots, a_t)$ contains a small
full-dimensional ball around the origin.
Dually, $\{y \in \R^n \,:\, \pro{a_i}{y} \geq b_i, \, i = 1, \ldots,
t\}$ is contained in a large ball. Hence, it is a bounded rational
polyhedron, thus a rational polytope.
\end{proof}

\subsection{The $\Q$-codegree under natural projections}

We begin with a  key observation.
\begin{proposition}\label{prop:projection}
The image $Q := \pi_P(P)$ of the natural projection of $P$
is a rational polytope satisfying $\qd(Q) \geq \qd(P)$. 
Moreover, if $\qd(Q) = \qd(P)$, then $\core(Q)$ is the point
$\pi_P(\core(P))$.
\end{proposition}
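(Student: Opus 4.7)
My plan has three parts.

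First, $Q$ is a rational polytope. By Proposition~\ref{prop:equi}, $\qd(P)$ is a positive rational, so $\core(P) = \adP{1/\qd(P)}$ is rational and $K(P)$ is a rational linear subspace. Hence $\pi_P$ is a rational linear surjection, and $Q = \pi_P(P)$ is a rational polytope.

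Second, to prove $\qd(Q) \ge \qd(P)$, by Proposition~\ref{prop:equi} it suffices to show that for every $s > 0$, if $\ad{Q}{s}$ is nonempty then so is $\adP{s}$. I would prove this by lifting: given $y \in \ad{Q}{s}$, construct $x \in P$ with $\pi_P(x) = y$ and $d_P(x) \ge s$, so that $x \in \adP{s}$. Analyzing the fiber $P_y := P \cap \pi_P^{-1}(y)$, which lies in an affine translate of $K(P)$, the facets of $P$ split according to whether $a_i \in K(P)^\perp$. A dimension count identifies $\pi_P(F_i) = Q \cap \{\bar{a}_i = b_i\}$ as a codimension-one subset of $Q$ for each facet $F_i$ with $a_i \in K(P)^\perp$, forcing $\bar{a}_i \ge b_i$ to be a facet inequality of $Q$ with primitive normal $\bar{a}_i$ in the quotient dual lattice; the associated constraint $a_i(x) \ge b_i + s$ is then constant on the fiber and automatically satisfied by $y \in \ad{Q}{s}$. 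The remaining facets with $a_i \notin K(P)^\perp$ impose constraints varying linearly over the fiber, and I would invoke Farkas' lemma to establish joint feasibility on the affine space $\tilde{y} + K(P)$: any infeasibility certificate would give a nonnegative combination $\sum \lambda_i a_i \in K(P)^\perp$ whose descent to $\R^n/K(P)$ decomposes further into primitive facet normals of $Q$, producing a contradiction to $y \in \ad{Q}{s}$ via Lemma~\ref{lemma:span} (which furnishes a positive spanning of $K(P)^\perp$ by active facet normals at $\core(P)$).

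Third, for the moreover statement, set $s^\ast := 1/\qd(P)$ and assume $\qd(Q) = \qd(P)$, so $\core(Q) = \ad{Q}{s^\ast}$. By the lifting, every $y \in \core(Q)$ arises as $\pi_P(x)$ for some $x \in \adP{s^\ast} = \core(P)$, hence $\core(Q) \subseteq \pi_P(\core(P))$. Since $\core(P)$ is contained in an affine translate of $K(P) = \ker \pi_P$, the image $\pi_P(\core(P))$ is a single point, so $\core(Q) = \{\pi_P(\core(P))\}$.

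The hardest step is the Farkas feasibility argument: one must rule out dual certificates $\sum \lambda_i a_i \in K(P)^\perp$ coming from the non-$K(P)^\perp$ facets of $P$ whose descent to $Q$ has a ``lossy'' decomposition into primitive facet normals of $Q$. I expect the necessary control to come from careful bookkeeping of primitivity in the quotient lattice $\Z^n / (K(P) \cap \Z^n)$, combined with the fact that $\bar{b}_G$ is attained as the minimum on $Q$ of the lifted primitive normal, both supplied by the structure of the natural projection through $\core(P)$.
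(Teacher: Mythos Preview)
Your approach via lifting is substantially harder than what the paper does, and the Farkas step you flag as ``hardest'' is a genuine gap rather than a technicality.

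The paper's proof is essentially two lines and runs in the opposite direction: instead of lifting points of $\ad{Q}{s}$ up to $\adP{s}$, it pushes specific facet constraints of $P$ down to $Q$. By Lemma~\ref{lemma:span}, the facets $F_1,\dots,F_t$ of $P$ active at $\core(P)$ have primitive inner normals $a_1,\dots,a_t$ that positively span $K(P)^\perp$. Since the dual lattice of $\Z^n/(K(P)\cap\Z^n)$ is $(\Z^n)^*\cap K(P)^\perp$, each $a_i$ is still a primitive facet normal of $Q$. Hence for $s=\qd(P)^{-1}$ every $y\in\ad{Q}{s}$ satisfies $\pro{a_i}{y}\ge b_i+s$ for $i=1,\dots,t$; but these inequalities are all equalities at the single point $\pi_P(\core(P))$, and since the $a_i$ positively span $(\R^n/K(P))^*$ this forces $y=\pi_P(\core(P))$. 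Thus $\ad{Q}{s}\subseteq\{\pi_P(\core(P))\}$, giving $\qd(Q)^{-1}\le s$ and the ``moreover'' clause at once. No feasibility argument on fibers is needed.

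Your lifting claim $\ad{Q}{s}\subseteq\pi_P(\adP{s})$ for arbitrary $s$ is strictly stronger than what the proposition asserts, and the paper establishes it (as an equality $\ad{Q}{s}=\pi_P(\adP{s})$) only under the additional hypothesis of $\Q$-normality, in Proposition~\ref{nice}. The obstruction is exactly what you worried about: a facet $G$ of $Q$ need not be the image of any facet of $P$ with normal in $K(P)^\perp$ (the example in Figure~\ref{strange} exhibits this explicitly), so a Farkas certificate $c=\sum_i\lambda_i a_i\in K(P)^\perp$ built from facets of $P$ with $a_i\notin K(P)^\perp$ need not compare well with the facet data of $Q$. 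When you rewrite $c=\sum_j\mu_j\bar a_{G_j}$ in terms of primitive facet normals of $Q$, there is no mechanism relating $\sum_i\lambda_i$ to $\sum_j\mu_j$, and the desired inequality between $\sum_i\lambda_i(b_i+s)$ and $\sum_j\mu_j(b_{G_j}+s)$ does not follow. The ``bookkeeping of primitivity'' you invoke controls individual vectors, not the total mass of a positive combination, so it cannot close this gap.
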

\begin{proof}
Let $t,x,F_i,a_i$ as in Lemma~\ref{lemma:span} and $s := \qd(P)^{-1}$. 
$Q$ is a rational polytope with respect to the lattice $L := \Z^n/(K(P) \cap \Z^n)$.  
The dual lattice of $L$ is $(\Z^n)^* \cap K(P)^\perp$. In particular, 
any $a_i$ for $i \in \{1, \ldots, t\}$ is still a primitive normal vector of a facet of $Q$. 
In particular, $\ad{Q}{s} \subseteq \pi_P(\Ps) =
\{\pi_P(x)\}$. Therefore, $\qd(Q)^{-1} \leq s$.
\end{proof}

The following example shows that this projection can be quite
peculiar.

\begin{figure}[ht]
\includegraphics{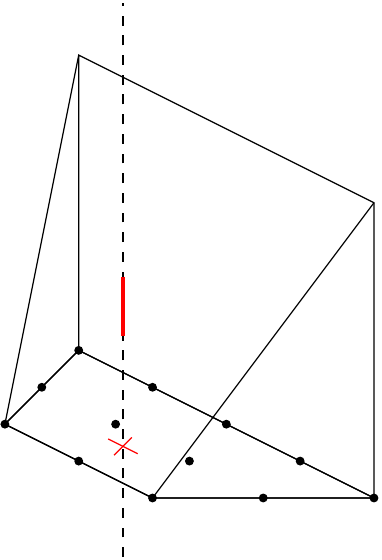}
\hspace{2cm}
\includegraphics{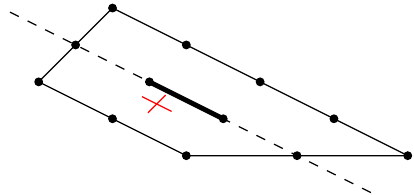}
\\
$P = \conv \left[
  \begin{smallmatrix}
    0&2&0&2&0&0 \\
    0&0&4&2&0&4 \\
    0&0&0&0&h&h
  \end{smallmatrix}
\right]$
\hspace{2cm}
$Q = \conv \left[
  \begin{smallmatrix}
    0&2&0&2 \\
    0&0&4&2
  \end{smallmatrix}
\right]$
\caption{\label{figEx}The $\Q$-codegree projection $\pi_P\;:\; P \to Q$}
\label{strange}
\end{figure}

In this picture, the dashed lines are the affine hulls along which 
we are projecting, while the fat line segments are the cores of $P$
and $Q$. 
On the left side we only drew the lattice points on the bottom face for clarity. 
Here, $\pi_P$ projects onto the bottom face $Q$. If we assume that the
height $h$ of $P$ is large enough, then the adjoint polytope
$\core(P)$ is a line segment projecting onto the point $x =
(4/3,4/3,0)$ marked on the bottom. Note that this point doesn't even
lie in the line segment $\core(Q)$. Essentially, the reason for this
peculiar behaviour is that the preimage of one of the two facets of
$Q$ defining the affine hull of $\core(Q)$ is not a facet of $P$.
Moreover, $\qd(Q) = 1 > \frac{3}{4} = \qd(P)$.

\subsection{Projections of $\alpha$-canonical polytopes}

\begin{definition}
  Let $\sigma$ be a rational cone with primitive generators $v_1,
  \ldots, v_m$. Then the height function associated with $\sigma$ is
  the piecewise linear function
  $$\height_\sigma(x) := \max \left\{ \sum_{i=1}^m \lambda_i \ : \ \lambda_i
  \ge 0 \text{ for } i = 1, \ldots, m\ , \ \sum_{i=1}^m \lambda_i v_i
  = x \right\}$$
  on $\sigma$.
  For $\alpha > 0$, we say that $\sigma$ is {\em $\alpha$-canonical} if
  $\height_\sigma(x) \ge \alpha$ for every non-zero $x \in \sigma \cap
  \Z^n$. A $1$-canonical cone is said to be {\em canonical}.

  A rational polytope is {\em ($\alpha$-)canonical} if all cones of its
  normal fan are.
\label{canonical-def}
\end{definition}
This is a generalization to the non-$\Q$-Gorenstein case of canonical
singularities in algebraic geometry.
Note that a $\Q$-Gorenstein cone of index $r$ is $1/r$-canonical.
In particular, rational polytopes with Gorenstein normal fan are canonical.

\begin{lemma} \label{lemma:canonical+projection}
  Let $\pi \colon P \to Q$ be a polytope projection, and assume $P$ is
  $\alpha$-canonical. Then $\alpha\, d_P(x) \le d_Q(\pi(x))$ for all $x
  \in P$.
\end{lemma}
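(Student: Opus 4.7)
The plan is to bound, for each facet $G$ of $Q$, the pullback of its defining inequality to $P$ by writing the primitive inner normal of $G$ as a non-negative combination of primitive inner normals of facets of $P$, and using the $\alpha$-canonical hypothesis to force the sum of the coefficients to be at least $\alpha$.

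First I would fix a facet $G$ of $Q$ with primitive inner normal $c \in (\Z^n)^* \cap K^\perp$ (where $K$ denotes the kernel of the linear map underlying $\pi$) and defining inequality $\pro{c}{\cdot} \geq d$. Because $c$ vanishes on $K$, we have $\pro{c}{x} = \pro{c}{\pi(x)}$ for every $x \in \R^n$, so the face $F := \{y \in P \,:\, \pro{c}{y} = d\}$ is the (non-empty) preimage under $\pi$ of the face of $Q$ cut out by $G$. Let $\sigma$ denote the normal cone of $F$ in $\NF(P)$; its primitive generators are exactly the primitive inner normals $a_{i_1}, \ldots, a_{i_k}$ of the facets of $P$ containing $F$, and by construction $c \in \sigma$.

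The key step is then to apply $\alpha$-canonicity: since $\sigma$ is a cone of $\NF(P)$ and $c$ is a non-zero lattice point of $\sigma$, the assumption yields $\height_\sigma(c) \geq \alpha$, i.e., coefficients $\lambda_j \geq 0$ with $c = \sum_j \lambda_j a_{i_j}$ and $\sum_j \lambda_j \geq \alpha$. Evaluating both sides at any vertex $v$ of $F$ (which lies in every $F_{i_j}$) pins down $d = \sum_j \lambda_j b_{i_j}$, so for every $x \in P$
\[ \pro{c}{\pi(x)} - d \;=\; \sum_j \lambda_j (\pro{a_{i_j}}{x} - b_{i_j}) \;=\; \sum_j \lambda_j\, d_{F_{i_j}}(x) \;\geq\; \Bigl(\sum_j \lambda_j\Bigr) d_P(x) \;\geq\; \alpha\, d_P(x). \]
Taking the minimum over all facets $G$ of $Q$ gives the desired inequality $d_Q(\pi(x)) \geq \alpha\, d_P(x)$.

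The main subtlety lies in the bookkeeping: one must verify that $c$ is correctly identified as a non-zero lattice point of the cone $\sigma$ of $\NF(P)$ (so that the $\alpha$-canonical assumption on $P$ applies) and that the resulting decomposition is supported on normals of facets actually containing $F$ — this last point is what guarantees that evaluation at a vertex of $F$ recovers the constant $d$. Once the height estimate $\sum_j \lambda_j \geq \alpha$ is in hand, the chain of inequalities is immediate from the definition $d_P(x) = \min_i d_{F_i}(x)$.
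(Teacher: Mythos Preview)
Your proof is correct and follows essentially the same approach as the paper: pull back the primitive inner normal of a facet of $Q$ to a non-zero lattice point in a cone $\sigma$ of $\NF(P)$, write it as a non-negative combination of the primitive generators of $\sigma$ with coefficient sum at least $\alpha$, and compare. Your version is somewhat more explicit than the paper's in identifying $\sigma$ as the normal cone of the face $F = \{\pro{c}{\cdot} = d\}$ and in justifying $d = \sum_j \lambda_j b_{i_j}$ by evaluating at a vertex of $F$, whereas the paper simply asserts this identity.
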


\begin{proof} Let $\pro{a}{\cdot} \ge b$ be a facet of $Q$ realizing
  $d_Q(\pi(x))$. That is, $\pro{a}{\pi(x)} = b + d_Q(\pi(x))$.
  Then the integral linear functional $\pi^*a$ belongs to some cone
  $\sigma \in \NF(P)$ with primitive generators $a_1, \ldots,
  a_m$. Write $\pi^*a = \sum_{i=1}^m \lambda_i a_i$ with $\lambda_i
  \ge 0$ for $i = 1, \ldots, m$ and $\sum_{i=1}^m \lambda_i =
  \height_\sigma(\pi^*a)$. Then $b = \sum_{i=1}^m \lambda_i b_i$, and $\sum_{i=1}^m \lambda_i
  \ge \alpha$. Thus
  \begin{multline*}
    d_Q(\pi(x)) 
    = \pro{a}{\pi(x)} - b 
    = \pro{\pi^*a}{x} - b 
    \\
    = \sum_{i=1}^m \lambda_i (\pro{a_i}{x} - b_i)
    \ge \sum_{i=1}^m \lambda_i d_P(x)
    \ge \alpha d_P(x) \ .
  \end{multline*}
\end{proof}

\begin{corollary}
  Let $\pi \colon P \to Q$ be a polytope projection, and assume $P$ is
  $\alpha$-canonical. Then $\qd(P) \ge \alpha \qd(Q)$.

  In particular, if $P$ is canonical, then $\qd(P) \ge \qd(Q)$.
\end{corollary}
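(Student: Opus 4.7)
The plan is to read off the corollary as a direct consequence of Lemma~\ref{lemma:canonical+projection} and the characterization of $\qd$ from Proposition~\ref{prop:equi}.

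First I would pick $s := \qd(P)^{-1}$, which by Proposition~\ref{prop:equi}(\ref{item:qcd2}) is the maximum value of $s$ for which $\adP{s}$ is non-empty. Choose any $x \in \adP{s}$, i.e.\ $x \in P$ with $d_P(x) \ge s$. Applying Lemma~\ref{lemma:canonical+projection} to this $x$ yields
\[
d_Q(\pi(x)) \;\ge\; \alpha\, d_P(x) \;\ge\; \alpha s \;=\; \frac{\alpha}{\qd(P)}.
\]
Hence $\pi(x) \in \ad{Q}{\alpha/\qd(P)}$, so this adjoint polytope of $Q$ is non-empty. By the definition of the $\Q$-codegree (using Proposition~\ref{prop:equi}(\ref{item:qcd2}) again), this forces $\qd(Q)^{-1} \ge \alpha/\qd(P)$, which rearranges to $\qd(P) \ge \alpha\,\qd(Q)$.

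The second statement is immediate: a canonical polytope is by Definition~\ref{canonical-def} $1$-canonical, so $\alpha = 1$ and $\qd(P) \ge \qd(Q)$.

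There is no real obstacle here; the whole argument is bookkeeping once Lemma~\ref{lemma:canonical+projection} is in hand. The only thing to double-check is that the lemma applies to the \emph{same} point we selected in $\adP{s}$ and that $Q = \pi(P)$ indeed inherits its facet structure in a way compatible with the statement of the lemma (which it does, since the lemma is stated for arbitrary polytope projections and arbitrary $x \in P$, not only for vertices).
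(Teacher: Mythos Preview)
Your proof is correct and is exactly the argument the paper intends: the corollary is stated without proof immediately after Lemma~\ref{lemma:canonical+projection}, and deducing it amounts to precisely the bookkeeping you carry out (pick $x$ realizing $\qd(P)^{-1}$, apply the lemma to bound $d_Q(\pi(x))$, and read off the inequality for $\qd(Q)$).
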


This shows that for {\em canonical} polytopes the natural projection in
Proposition~\ref{prop:projection} is $\Q$-codegree preserving! In 
particular, the polytope $Q$ has the nice property that $\core(Q)$ is a point. 

\begin{example}
  Unfortunately, it is in general not true that
  being $\alpha$-canonical is preserved under the natural
  projection, as can be seen from the following example. Consider
  the polytope
    \begin{align*}
      P = \conv \left[
          \renewcommand{\arraycolsep}{1.5pt}
          \renewcommand{\arraystretch}{.7}
        \begin{array}{>{\scriptstyle}r>{\scriptstyle}r>{\scriptstyle}r>{\scriptstyle}r>{\scriptstyle}r>{\scriptstyle}r>{\scriptstyle}r>{\scriptstyle}r>{\scriptstyle}r}
          14 & 8 & 0 & -8 & 14 & 0 & 0 & -14 & -14\\
          7 & 1 & 0 & 1 & 7 & 21 & 21 & 7 & 7\\
          -21 & -3 & 0 & 3 & 21 & 21 & -21 & 21 & -21
        \end{array}
      \right]
    \end{align*}
  This is a three-dimensional lattice polytope.  Its core face has the
  vertices $(0,7,7)$ and $(0,7,-7)$, so the natural projection $\pi$
  maps onto a two-dimensional lattice polytope by projecting onto the
  first two coordinates. The projection is
  \begin{align*}
    \pi(P) = \conv \left[
          \renewcommand{\arraycolsep}{1.5pt}
          \renewcommand{\arraystretch}{.7}
        \begin{array}{>{\scriptstyle}r>{\scriptstyle}r>{\scriptstyle}r>{\scriptstyle}r>{\scriptstyle}r>{\scriptstyle}r}
        14 & 8 & 0 & -8 & 0 & -14\\
        7 & 1 & 0 & 1 & 21 & 7
      \end{array}
    \right]
  \end{align*}
All but one normal cone of $P$ is canonical. The exception is the
normal cone at the origin. Its primitive rays are $(-1,-5,-1)$,
$(1,-5,1)$, $(0,-3,-1)$, and $(0,-3,1)$. The ray $(0,-1,0)$ is in the
cone, and its height is $\frac13$. So $P$ is $\frac13$-canonical.  The
normal cones of the natural projection $Q$ are again canonical with
one exception. The normal cone at the origin is generated by the rays
$(1,-8)$ and $(-1,-8)$. It contains the ray $(0,-1)$, so $Q$ is only
$\frac18$-canonical. The computations were done with
\texttt{polymake}~\cite{Joswig2009}.
\end{example}

\subsection{$\Q$-normality under natural projections}

\begin{proposition} \label{nice}
  Let $P$ be $\Q$-normal. Then its image $Q$ under the natural
  projection is $\Q$-normal, its core is the point $\core(Q) =
  \pi_P(\core(P))$, and $\qd(Q) = \qd(P)$. Moreover, if $P$ is
  $\alpha$-canonical, then $Q$ is $\alpha$-canonical.
\end{proposition}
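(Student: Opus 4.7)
Set $s:=\qd(P)^{-1}$ and fix $x\in\relint\core(P)$; since $K(P)$ is the linear space parallel to $\aff\core(P)$, the image $q:=\pi_P(x)$ is independent of $x$, so $q=\pi_P(\core(P))$ is a single point. Let $F_1,\dots,F_t$ be the facets of $P$ with $d_{F_i}(x)=s$, and let $a_1,\dots,a_t$ be their primitive inner normals; by Lemma~\ref{lemma:span} these positively span $K(P)^\perp$. The plan is to prove the four conclusions in order: the equalities $\core(Q)=\{q\}$ and $\qd(Q)=\qd(P)$, then $\Q$-normality of $Q$, and finally preservation of $\alpha$-canonicality.

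First I would show $q\in\adQ{s}$; combined with the inclusion $\adQ{s}\subseteq\{q\}$ from the proof of Proposition~\ref{prop:projection}, this yields both $\core(Q)=\{q\}$ and $\qd(Q)=\qd(P)$. The input from $\Q$-normality is that $\NF(\adP{s'})=\NF(P)$ for all $s'\in(0,s)$: every facet $F$ of $P$ then contributes a non-empty facet of $\adP{s'}$ on which $d_F\equiv s'$, and passing to the Hausdorff limit $s'\nearrow s$ gives $\min_{\core(P)}d_F=s$ for every facet $F$ of $P$. For facets $F$ whose normal lies in $K(P)^\perp$, the function $d_F$ is constant on $\core(P)$, so $d_F\equiv s$ there and $F$ must be one of the $F_i$.

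The crux, and the main obstacle I anticipate, is the sub-claim that \emph{every} facet of $Q$ arises as $\pi_P(F_i)$ for some $i$; equivalently, no primitive $a\in K(P)^\perp$ defining a facet of $Q$ can cut out a face $F_a=\{y\in P:\pro{a}{y}=b_a\}$ of codimension $\geq 2$ in $P$. The plan is to track $F_a$ along the adjunction family: by $\Q$-normality its combinatorial type is preserved throughout $s'\in(0,s)$, and in the Hausdorff limit $s'\nearrow s$ the corresponding face of $\adP{s'}$ accumulates inside the hyperplane cut out by $a$ in $\aff\core(P)$, which is all of $\core(P)$ since $\pro{a}{\cdot}$ is constant there. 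Matching $\dim F_a$ against $\dim\core(P)=\dim K(P)$ and against the codimension-one requirement $\dim\pi_P(F_a)=\dim Q-1$ should force $F_a$ to be a facet of $P$ after all. Granting this, every facet $\bar F=\pi_P(F_i)$ of $Q$ satisfies $d_{\bar F}(q)=d_{F_i}(x)=s$, so $d_Q(q)=s$ and $q\in\adQ{s}$, as required.

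For $\Q$-normality of $Q$, since $\core(Q)=\{q\}$ every vertex track of $Q$ terminates at $q$ at parameter $s$, and by Lemma~\ref{lemma:QGorenstein}(\ref{item:lemma:QGorenstein3}) it suffices to rule out an earlier collision $\bar v(s')=\bar v'(s')$ with $s'<s$. Each vertex $\bar v$ of $Q$ lifts to a vertex $v$ of $P$, the parametrization $v(s')=v+(s'/r_\sigma)u_\sigma$ projects to a parametrization of $\bar v(s')$, and an earlier collision in $Q$ would place $v(s')$ and $v'(s')$ into a common fiber of $\pi_P$, contradicting the $\Q$-normality of $P$ after a short chase. Finally, the sub-claim above identifies every normal cone $\bar\sigma$ of $Q$ as $\sigma\cap K(P)^\perp$ for a cone $\sigma\in\NF(P)$, with primitive generators inherited from those of $\sigma$ lying in $K(P)^\perp$; hence $\height_{\bar\sigma}(y)\geq\height_\sigma(y)\geq\alpha$ for every $y\in\bar\sigma\cap(\Z^n)^*$, yielding $\alpha$-canonicality of $Q$.
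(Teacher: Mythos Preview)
Your proposal has two genuine gaps, both stemming from the same missing structural fact.

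\textbf{The sub-claim.} Your dimension-matching argument does not go through as written. Knowing that the Hausdorff limit of $F_a^{(s')}$ \emph{accumulates inside} $\core(P)$ gives no lower bound on $\dim F_a$; you would need the limit to equal \emph{all} of $\core(P)$, which would force $K(P)\subseteq N_P(F_a)^\perp$ and then indeed $\dim F_a = \dim\pi_P(F_a)+\dim K(P)=n-1$. But the equality of the limit with $\core(P)$ is precisely what is at stake, and your sketch does not establish it. The inequality $\dim\pi_P(F_a)=\dim Q-1$ alone only gives $\dim F_a\ge n-\dim K(P)-1$, which is not enough.

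\textbf{The $\alpha$-canonical step.} Your inequality $\height_{\bar\sigma}(y)\ge\height_\sigma(y)$ is backwards. If the primitive generators of $\bar\sigma$ are a \emph{subset} of those of $\sigma$, then every admissible expression for $\height_{\bar\sigma}$ is also admissible for $\height_\sigma$, so $\height_{\bar\sigma}(y)\le\height_\sigma(y)$; this does not transfer $\alpha$-canonicality. What is actually needed is that each cone $\bar\sigma$ of $\NF(Q)$ is \emph{itself} a cone of $\NF(P)$ (same generators, not merely a subset), so that the height functions literally coincide.

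Both issues dissolve once you use the single structural consequence of $\Q$-normality that the paper exploits: by Lemma~\ref{lemma:QGorenstein}(\ref{item:lemma:QGorenstein5}), $\NF(P)$ refines $\NF(\core(P))$, and $K(P)^\perp$ is the minimal cone of $\NF(\core(P))$; hence $K(P)^\perp$ is a union of cones of $\NF(P)$. This immediately yields (i) every ray of $\NF(Q)=\NF(P)\cap K(P)^\perp$ is a ray of $\NF(P)$, so every facet of $Q$ lifts to a facet of $P$; (ii) every cone of $\NF(Q)$ is a cone of $\NF(P)$, so $\alpha$-canonicality passes; and (iii) $d_G(\pi_P(x))=d_{\pi_P^*G}(x)$ for each facet $G$ of $Q$, whence $\adQ{s'}=\pi_P(\adP{s'})$ for all $s'\ge 0$, giving $\qd(Q)=\qd(P)$, $\core(Q)=\{\pi_P(\core(P))\}$, and $\Q$-normality of $Q$ in one stroke. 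Your first paragraph is in fact already proving the refinement statement in disguise: from $\min_{\core(P)}d_F=s$ for every facet $F$, one sees that every facet-defining hyperplane of $P$ translates to a supporting hyperplane of $\core(P)$, which is exactly $\NF(P)$ refining $\NF(\core(P))$. Making this explicit closes both gaps and shortens the argument considerably.
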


\begin{proof}
If $P$ is $\Q$-normal, then the normal fan of $P$ refines the normal
fan of $\core(P) = \adP{1/\nf(P)}$.
In particular, the face $K(P)^\perp$ of $\NF(\core(P))$
is a union of faces of $\NF(P)$. Therefore, being $\alpha$-canonical
is preserved.
On the other hand, $\NF(Q) = \NF(P) \cap
K(P)^\perp$ for any polytope projection $P \to Q$. That means that
every facet $F$ of $Q$ lifts to a facet $\pi_P^*F$ of $P$. Together with
$d_F(\pi_P(x)) = d_{\pi_P^*F}(x)$ (for $x \in P$) this implies
$\adQ{s} = \pi(\adP{s})$ for any $s \geq 0$.
This yields the statements.
\end{proof}

If a rational polytope is $\Q$-normal and its core is a point, then 
the generators of its normal fan form the vertex set of a lattice polytope. 
Such a fan corresponds to a so-called {\em toric Fano variety}, see, e.g., \cite{Deb03,Nil05}.

\section{Cayley decompositions}\label{sec:cayley}

Throughout let $P \subseteq \R^n$ be an $n$-dimensional lattice polytope. 

\subsection{Lattice width, Cayley polytopes and codegree}

We recall that the {\em lattice width} of a polytope $P$ is defined as
the minimum of $\max_{x \in P} \pro{u}{x} - \min_{x\in P} \pro{u}{x}$
over all non-zero integer linear forms $u$.
We are interested in lattice polytopes of lattice width one, which we
also call (nontrivial) {\em Cayley polytopes} or Cayley polytopes of
length $\geq 2$.

\begin{definition}\label{CayleyDef}
Given lattice polytopes $P_0, \ldots, P_t$ in $\R^k$, 
the {\em Cayley sum} $P_0 * \cdots * P_t$ is defined to be the convex hull of 
$(P_0 \times 0) \cup (P_1 \times e_1) \cdots \cup (P_t \times e_t)$ in
$\R^k \times \R^t$ for the standard basis $e_1, \ldots, e_t$ of $\R^t$. 

We say that $P \subseteq \R^n$ is a {\em Cayley polytope of length $t+1$}, if 
there exists an affine lattice basis of $\Z^n \cong \Z^k \times \Z^t$ identifying $P$ with the Cayley sum $P_0 *  \cdots *
P_t$ for some lattice polytopes $P_0, \ldots, P_t$ in $\R^k$.
\end{definition}

This definition can be reformulated, \cite[Proposition~2.3]{BN08}.

\begin{lemma} \label{lemma:Batyrev-Nill}
  Let $\sigma \subseteq \R^{n+1}$ be the cone spanned by $P \times
  1$. 
%   Let us define $u \in (\Z^{n+1})^*$ such that $\pro{u}{P \times 1} =
%   1$.
  Then the following statements are equivalent:
  \begin{enumerate}
  \item\label{item:lemma:Batyrev-Nill1} $P$ is a Cayley polytope $P_0
    * \cdots * P_t$ of length $t+1$ 
  \item\label{item:lemma:Batyrev-Nill2} There is a lattice projection
    $P$ onto a unimodular $t$-simplex 
  \item\label{item:lemma:Batyrev-Nill3} There are nonzero $x_1,
    \ldots, x_{t+1} \in \sigma^\vee \cap (\Z^{n+1})^*$ such 
    that \[x_1 + \cdots + x_{t+1} = e_{n+1}\]
  \end{enumerate}
\end{lemma}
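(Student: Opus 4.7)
The plan is to prove the three implications cyclically: (1)$\Rightarrow$(2)$\Rightarrow$(3)$\Rightarrow$(1). The first two are essentially bookkeeping; only (3)$\Rightarrow$(1) involves substantive content.

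For (1)$\Rightarrow$(2), if $P = P_0 * \cdots * P_t \subseteq \R^k \times \R^t$, then projection to the $\R^t$-factor is a lattice map sending $P$ onto $\conv(0, e_1, \ldots, e_t) = \Delta_t$.

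For (2)$\Rightarrow$(3), take the lattice projection $\pi \colon \R^n \to \R^t$ with $\pi(P) = \Delta_t$. Pulling back the coordinate functions on $\R^t$ yields affine integral functions $\ell_1, \ldots, \ell_t$ on $\R^n$; set $\ell_{t+1} := 1 - \ell_1 - \cdots - \ell_t$. Each $\ell_i$ is non-negative on $P$, not identically zero (each vertex of $\Delta_t$ is attained), and they sum to~$1$. Writing $\ell_i(p) = \langle a_i, p\rangle + c_i$ and homogenizing via $x_i(p,w) := \langle a_i, p\rangle + c_i\, w$ gives nonzero elements of $(\Z^{n+1})^*$ that are non-negative on $\sigma = \cone(P \times 1)$ and sum to $e_{n+1}$.

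For (3)$\Rightarrow$(1), set $\ell_i(p) := x_i(p,1)$; these are affine integral functions, non-negative on $P$, with $\sum_i \ell_i = 1$. At any lattice point $v \in P \cap \Z^n$ the values $\ell_i(v)$ are non-negative integers summing to $1$, so exactly one $\ell_i(v)$ equals $1$ and the rest vanish. Each face $F_i := \{\ell_i = 1\} \cap P$ is non-empty: since $x_i \neq 0$ lies in $\sigma^\vee$ and $\sigma$ is full-dimensional, $x_i$ must be positive on some ray of $\sigma$, forcing $\ell_i(v) \geq 1$ for some vertex $v$, hence $\ell_i(v) = 1$. The map $\pi(p) := (\ell_1(p), \ldots, \ell_t(p))$ is then an affine lattice map sending $P$ into $\Delta_t$ whose image contains all vertices $0, e_1, \ldots, e_t$, so $\pi(P) = \Delta_t$. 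The induced homomorphism $\Z^n \to \Z^t$ is surjective onto a free abelian group, hence splits, giving a lattice isomorphism $\Z^n \cong \Z^k \oplus \Z^t$ under which $\pi$ becomes the second projection. Setting $P_i := \pi^{-1}(e_i) \cap P$ (with $e_0 := 0$), every vertex of $P$ lies in some $P_i$ by the integrality observation, so $P = \conv\bigl(\bigcup_i P_i\bigr) = P_0 * \cdots * P_t$.

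The main subtlety I expect is the integrality argument at the start of (3)$\Rightarrow$(1): one must simultaneously use that the $x_i$ are integral, that $P$ is a lattice polytope, and that the sum of the $\ell_i$ is the constant $1$ in order to force the values at vertices into $\{0,1\}$. This is what makes the target simplex unimodular and lets the Cayley decomposition be read off directly from the fibres.
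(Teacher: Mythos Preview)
Your proof is correct. Note, however, that the paper does not supply its own proof of this lemma: it merely records the statement and cites \cite[Proposition~2.3]{BN08}. Your argument---cycling (1)$\Rightarrow$(2)$\Rightarrow$(3)$\Rightarrow$(1), with the substantive step being the integrality observation that the affine forms $\ell_i$ take only values $0$ or $1$ at vertices of $P$ and hence partition the vertex set into the fibres $P_i$---is the standard one and is essentially what one finds in that reference.
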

Since the $t$-th multiple of a unimodular $t$-simplex contains
no interior lattice points, we conclude from Lemma \ref{lemma:Batyrev-Nill}(2) that
\[\cd(P_0 * \cdots * P_t) \geq t+1.\]
Conversely, Conjecture~\ref{C2} states that having large codegree implies being a Cayley polytope. To get the reader acquainted with Conjecture~\ref{C2}, we include a simple observation. 
\begin{lemma}
If $\cd(P)>\up{\frac{n+1}{2}}$, then through every vertex there is an edge whose only lattice points are its two vertices.
\end{lemma}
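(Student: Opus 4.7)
The plan is to prove the contrapositive: fix a vertex $v$ of $P$ and assume that every edge incident to $v$ has lattice length at least $2$; then exhibit a lattice point in $\intr(kP)$ for $k:=\up{\frac{n+1}{2}}$, contradicting the hypothesis on $\cd(P)$.

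First I would choose $n$ primitive edge directions $u_1,\ldots,u_n$ at $v$ that are linearly independent. This is possible because the tangent cone at $v$ is $n$-dimensional, so its ray generators (the primitive edge directions) span $\R^n$. By the length assumption, for each $i$ the point $v+2u_i$ lies on the corresponding edge through $v$ and hence in $P$, so the full-dimensional lattice simplex $T:=\conv(v,v+2u_1,\ldots,v+2u_n)$ is contained in $P$.

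The key step is to verify $\intr(T)\subseteq\intr(P)$. Writing a point of $T$ in barycentric coordinates as $\alpha_0 v+\sum_{i=1}^n\alpha_i(v+2u_i)$, for any facet $F$ of $P$ one has
\[d_F\bigl(\alpha_0 v+\textstyle\sum_{i=1}^n\alpha_i(v+2u_i)\bigr)=\alpha_0\,d_F(v)+\sum_{i=1}^n\alpha_i\,d_F(v+2u_i)\ge 0,\]
since $v$ and each $v+2u_i$ lie in $P$. If $v\notin F$, then $d_F(v)>0$, so the expression is strictly positive whenever $\alpha_0>0$. If $v\in F$, then $d_F(v)=0$, but the linear hyperplane $\{x\in\R^n:\langle a_F,x\rangle=0\}$ cannot contain the $n$ linearly independent vectors $u_1,\ldots,u_n$, so some $d_F(v+2u_i)=2\langle a_F,u_i\rangle>0$; combined with $\alpha_i>0$, this again forces strict positivity. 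Thus $\intr(T)\subseteq\intr(P)$.

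Finally, with $k:=\up{\frac{n+1}{2}}$ we have $2k>n$, and the lattice point $w:=kv+u_1+\cdots+u_n$ satisfies
\[\frac{w}{k}=\frac{2k-n}{2k}\cdot v+\sum_{i=1}^n\frac{1}{2k}\cdot(v+2u_i),\]
a convex combination with all coefficients strictly positive; hence $w/k\in\intr(T)\subseteq\intr(P)$, and so $w\in\intr(kP)\cap\Z^n$. This yields $\cd(P)\le\up{\frac{n+1}{2}}$, contrary to assumption. The main obstacle is the interior containment $\intr(T)\subseteq\intr(P)$: the whole argument hinges on it, and it is exactly the linear independence of $u_1,\ldots,u_n$ that keeps $T$ from collapsing into a facet of $P$ at $v$.
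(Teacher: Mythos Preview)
Your argument is correct and shares its geometric core with the paper's proof: both negate the conclusion at some vertex $v$, pick $n$ linearly independent primitive edge directions $u_i$ with $v+2u_i\in P$, and thereby embed an affine image of $2\Delta_n$ into $P$. The difference lies in how the codegree bound is extracted. The paper invokes Stanley's monotonicity theorem to get $\cd(P)\le\cd(f(2\Delta_n))\le\cd(2\Delta_n)=\up{\frac{n+1}{2}}$, treating the codegree of $2\Delta_n$ as known. You instead construct the interior lattice point $w=kv+u_1+\cdots+u_n$ by hand, which makes your proof entirely self-contained and avoids the nontrivial citation of Stanley's result. One minor remark: your careful facet-by-facet verification that $\intr(T)\subseteq\intr(P)$ is unnecessary, since $T$ is full-dimensional and contained in $P$, so any interior point of $T$ has an open neighbourhood in $T\subseteq P$ and hence lies in $\intr(P)$ automatically.
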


\begin{proof}
Assume otherwise. Then there exists an injective lattice homomorphism $f$ mapping 
$2 \Delta_n \to P$. Therefore, Stanley's monotonicity theorem \cite{Sta93,BN07} yields $n+1 - \cd(f(2 \Delta_n)) \leq n+1 - \cd(P)$, hence 
$\cd(P) \leq \cd(f(2 \Delta_n)) \leq \cd (2 \Delta_n) = \up{\frac{n+1}{2}}$. 
This yields a contradiction to our assumption.
\end{proof}

\subsection{The decomposition theorem}

Let $P,P'$ be $n$-dimensional lattice polytopes. We will say that $P$ and $P'$ are {\em unimodularly equivalent} ($P \cong P'$), 
if there exists an affine lattice automorphism of $\Z^n$ mapping the
vertices of $P$ onto the vertices of $P'$. 
It is a well-known result, see e.g. \cite{BN07}, that $P \cong \Delta_n$ if and only if $\cd(P)=n+1$. 
Since $\mu(P) \le \cd(P) \le n+1$, and $\mu(\Delta_n)=n+1$, we deduce that $P \cong \Delta_n$ if and only if $\mu(P)=n+1$. 

The following proves a general structure result on lattice
polytopes of high $\Q$-codegree.
We set
\[d(P) := \left\{\begin{array}{rl}
2(n-\lfloor \qd(P) \rfloor) & \text{, if } \qd(P) \not\in \N\\
2 (n-\qd(P)) + 1 & \text{, if } \qd(P) \in \N
\end{array}\right.\]
If we exclude the special situation $P \cong \Delta_n$, we have
%$k \leq 2 (n+1 - \lceil \qd(P) \rceil)
$1 \leq d(P) < 2 (n+1-\qd(P))$. 

\begin{theorem}\label{main}
Let $P$ be an $n$-dimensional lattice polytope with $P \not\cong
\Delta_n$.

If $n >d(P)$, then $P$ is a Cayley sum of lattice polytopes in $\R^m$
with $m \leq d(P)$.
\label{new}
\end{theorem}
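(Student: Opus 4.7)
The plan is to invoke Lemma~\ref{lemma:Batyrev-Nill}(3) by exhibiting nonzero lattice elements $x_1,\ldots,x_{\ell+1}\in\sigma^\vee\cap(\Z^{n+1})^*$ summing to $e_{n+1}$ with $\ell+1\ge n+1-d(P)$, where $\sigma:=\cone(P\times\{1\})$. Set $q:=\qd(P)$ and fix $x_0\in\relint(\core(P))$. By Lemma~\ref{lemma:span}, the primitive inner normals $a_1,\ldots,a_t$ of the facets of $P$ tight at $x_0$ positively span $K(P)^\perp$, and each $(a_i,-b_i)$ is a primitive lattice ray of $\sigma^\vee$ with $\langle(a_i,-b_i),(x_0,1)\rangle=1/q$. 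Since $\langle e_{n+1},(x_0,1)\rangle=1$, these tight rays are the natural building blocks, and any decomposition using only tight rays has at most $q$ summands.

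A first step is a reduction to the case $\core(P)=\{x_0\}$. The natural projection $\pi_P\colon P\twoheadrightarrow Q$ of Proposition~\ref{prop:projection} satisfies $\qd(Q)\ge q$, and a direct computation using the two cases in the definition of $d(P)$ gives $d(P)\ge d(Q)+\dim\core(P)$. A Cayley decomposition of $Q$ with summand dimension $\le d(Q)$ pulls back along the lattice projection $\pi_P$ to a Cayley decomposition of $P$ with summand dimension $\le \dim\core(P)+d(Q)\le d(P)$, so after this reduction we may assume $\core(P)$ is a single point and the $a_i$'s positively span all of $(\R^n)^*$. In this situation a positive integer relation $\sum_i\mu_i a_i=0$ exists and produces
\[
\sum_i\mu_i(a_i,-b_i)=(0,M)\in\sigma^\vee\cap(\Z^{n+1})^*,\qquad M=\tfrac{1}{q}\sum_i\mu_i\in\Z_{>0}.
\]

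The main step is to convert such a relation into a decomposition of $e_{n+1}$ with enough summands. When $q\in\N$, one rescales to $M=1$, so $e_{n+1}=\sum_i\mu_i(a_i,-b_i)$ with $\sum_i\mu_i=q$, and the task reduces to showing $|\{i:\mu_i>0\}|\ge 2q-n=n+1-d(P)$. When $q\notin\N$, no pure-ray combination gives $e_{n+1}$ because $M\in\Z$ forces $q\mid\sum_i\mu_i$; here one supplements the ray combination by a non-ray lattice element of $\sigma^\vee$ whose pairing with $(x_0,1)$ absorbs the fractional part of $q$, aiming for a total support of size $\ge 2\lfloor q\rfloor-n+1=n+1-d(P)$. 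The piecewise formula for $d(P)$ is precisely what this case split dictates. Once such a decomposition is produced, Lemma~\ref{lemma:Batyrev-Nill}(3) yields a lattice projection $P\to\Delta_\ell$ with $\ell\ge n-d(P)$, i.e., a Cayley structure with summand dimension $\le d(P)$.

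The hardest step is the support-size bound. It amounts to a combinatorial statement about positive integer relations among the tight facet normals $a_i$ (which positively span $(\R^n)^*$), together with, in the non-integer-$q$ case, a simultaneous control over the auxiliary lattice element that absorbs the fractional part. The hypothesis $P\not\cong\Delta_n$ is essential: the standard simplex realizes the extremal configuration in which the only positive integer relation is the one supported on all $n+1$ facet normals with equal weight, leaving no flexibility to enlarge the support; outside this case, the tight-facet geometry of the core provides enough slack to meet the target support size, and this is precisely what the numerical form of $d(P)$ encodes.
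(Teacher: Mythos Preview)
Your framework is right: reduce to a point core via natural projections, work in the dual cone $\sigma^\vee$, note that the tight facet normals $(a_i,-b_i)$ pair with $(x_0,1)$ to give $1/q$, and aim for Lemma~\ref{lemma:Batyrev-Nill}(3). This is exactly how the paper sets things up.

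The genuine gap is the last two paragraphs. You correctly identify the ``support-size bound'' as the heart of the matter and then do not prove it. The assertion that when $q\in\N$ one may ``rescale to $M=1$'' is unjustified: a positive integer relation $\sum_i\mu_i a_i=0$ can be divided by a common factor, but there is no a priori reason a relation with $\sum_i\mu_i=q$ exists, let alone one with at least $2q-n$ distinct indices. In the non-integer case, ``a non-ray lattice element that absorbs the fractional part'' is a wish, not a construction. The final paragraph (``outside this case, the tight-facet geometry of the core provides enough slack'') is not an argument; the hypothesis $P\not\cong\Delta_n$ only rules out $\qd(P)=n+1$ and does not by itself furnish a decomposition with large support.

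What the paper supplies, and what your outline is missing, is a concrete mechanism to control the number of summands. After passing to the larger polytope $\tilde P\supseteq P$ of Lemma~\ref{lemma:span} (all of whose facets are tight at $x_0$), one forms the lattice polytope
\[
R=\conv\bigl(\{0\}\cup\{\eta:\eta\text{ primitive facet normal of }\tilde\sigma\}\bigr)\subset(\R^{n+1})^*,
\]
a pyramid over the hyperplane $\langle\,\cdot\,,(x_0,1)\rangle=1/q$. One shows $\cd(R)=\lfloor q\rfloor+1=:r$ and then applies the Ehrhart-theoretic Lemma~\ref{sums} to the lattice point $(u,r)=(e_{n+1},r)$ in $\pos(R\times\{1\})$. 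That lemma decomposes $u$ as $k\cdot 0+\eta_1+\cdots+\eta_g+p$ with $k\in\{0,1\}$ and the residual $p\in jR$ satisfying $j\le n+2-r$; this bound on $j$ is what forces $g$ (hence the number of nonzero summands in $\sigma^\vee$) to be large. The integer versus non-integer distinction for $q$ arises from whether $k=0$ or $k=1$. Your proposal has no analogue of $R$, of its codegree computation, or of the height bound from Lemma~\ref{sums}, and without some such device the support-size bound does not follow.
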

For the proof we recall the following folklore result.
\begin{lemma}\label{sums}
  Let $P \subseteq \R^n$ be an $n$-dimensional lattice polytope.  Let
  $z \in \pos(P \times \{1\}) \cap \Z^{n+1}$. Then there exist (not
  necessarily different) vertices $v_1, \ldots, v_g$ of $P$ and a
  lattice point $p \in (j P) \cap \Z^n$ with
  \[z = (v_1, 1) + \cdots + (v_g,1) + (p,j)\] such that $(p,j)=(0,0)$
  or $1 \leq j \leq n+1-\cd(P)$.
%and $p$ is not a non-negative integer  combination of $x_1, \ldots, x_g$.
\end{lemma}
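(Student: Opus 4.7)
The plan is to split by the value of the last coordinate $k$ of $z = (z', k)$, which is a nonnegative integer since $z \in \pos(P \times \{1\}) \cap \Z^{n+1}$. If $k = 0$, then writing $z$ as a nonnegative combination of the generators $(v, 1)$ (with $v$ a vertex of $P$) forces all coefficients to vanish, so $z = 0$, and the decomposition holds trivially with $g = 0$ and $(p, j) = (0, 0)$. Henceforth I assume $k \ge 1$, whence $z'/k \in P$.

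Next I would triangulate $P$ into $n$-dimensional lattice simplices whose vertex sets lie in the vertex set of $P$ (e.g., via a pulling triangulation), and choose a simplex $\sigma$ of the triangulation that contains $z'/k$, with vertices $v_0, \ldots, v_n$. Writing $z'/k = \sum_i \lambda_i v_i$ as a convex combination and splitting $k \lambda_i = g_i + \eta_i$ with $g_i := \lfloor k \lambda_i \rfloor$ and $\eta_i \in [0, 1)$, I set
\[
p := \sum_i \eta_i v_i = z' - \sum_i g_i v_i \in \Z^n, \qquad j := \sum_i \eta_i = k - \sum_i g_i \in \N.
\]
This immediately produces the decomposition $z = \sum_i g_i (v_i, 1) + (p, j)$; moreover, the relation $p = j \sum_i (\eta_i/j) v_i$ for $j \ge 1$ shows $p \in j \sigma \subseteq j P$, while $j = 0$ forces each $\eta_i = 0$ and hence $p = 0$.

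The main obstacle is then the upper bound $j \le n + 1 - \cd(P)$, which I would obtain by a direct duality argument. Since $\sum_i \eta_i < n + 1$, we already know $j \le n$; assuming further that $j \ge 1$, consider the \emph{complementary} lattice point
\[
q := \sum_i v_i - p = \sum_i (1 - \eta_i) v_i \in \Z^n.
\]
Setting $j' := n + 1 - j \in \{1, \ldots, n\}$, the barycentric coordinates $(1 - \eta_i)/j'$ of $q/j'$ with respect to the vertices of $\sigma$ are strictly positive and sum to $1$, so $q/j' \in \intr(\sigma)$, i.e., $q \in \intr(j' \sigma) \cap \Z^n$. Because $\sigma \subseteq P$ are both $n$-dimensional, $\intr(j' \sigma) \subseteq \intr(j' P)$, so $\intr(j' P) \cap \Z^n \ne \emptyset$, forcing $\cd(P) \le j' = n + 1 - j$. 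Rearranging gives the desired bound, completing the proof.
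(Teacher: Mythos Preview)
Your argument is correct, and it takes a somewhat different (and more self-contained) route than the paper's. The paper chooses a possibly lower-dimensional simplex $S\subseteq P$ via Carath\'eodory, writes $(p,j)$ as a lattice point in the half-open fundamental parallelepiped, and then invokes Ehrhart-theoretic machinery: the height $j$ is at most the degree of the $h^*$-polynomial of $S$, which equals $\dim S+1-\cd(S)$ by Ehrhart--Macdonald reciprocity, and finally $\dim S+1-\cd(S)\le n+1-\cd(P)$ by Stanley's monotonicity theorem. Your proof instead fixes a full-dimensional simplex $\sigma$ from a triangulation of $P$ by its own vertices, and replaces all of the Ehrhart machinery with the elementary ``complementary point'' trick $q=\sum_i v_i-p$: since each $\eta_i<1$, all barycentric coordinates $1-\eta_i$ are strictly positive, so $q$ is an interior lattice point of $(n+1-j)\sigma\subseteq(n+1-j)P$, forcing $\cd(P)\le n+1-j$. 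This is effectively a bare-hands proof of the reciprocity/monotonicity ingredients in the special case needed here; it is more elementary and avoids the external references, at the cost of committing to full-dimensional simplices (which is harmless once you use a vertex-triangulation rather than Carath\'eodory).
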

\begin{proof}
  There exists an $m$-dimensional simplex $S$ in $P$ with vertices
  $v_1, \ldots, v_{m+1}$ in the vertex set of $P$ such that $z \in
  \pos((v_1,1), \ldots, (v_{m+1},1))$.  We can write 
  \begin{align*}
    z\ &=\ \sum_{i=1}^{m+1} k_i (v_i,1)\ +\ \sum_{i=1}^{m+1} \lambda_i (v_i,1)
    \quad\text{for}\quad k_i\; \in\; \N \quad\text{and}\quad \lambda \;\in\; [0,1)\,.
  \end{align*}
  See also Figure~\ref{fig:zdecomp}. 
The lattice point $\sum_{i=1}^{m+1} \lambda_i (v_i,1)$ is an element
of the fundamental parallelepiped of the simplex $S$. By
\cite[Corollary 3.11]{BR07} its height $j$ equals at most the degree
of the so-called Ehrhart
$h^*$-polynomial. Ehrhart-Macdonald-Reciprocity implies that this
degree is given by $m+1-\cd(S)$. We refer to \cite{BN07} for more
details. Now, the result follows from $j \le m+1-\cd(S) \leq
n+1-\cd(P)$ by Stanley's monotonicity theorem 
  \cite{Sta93}.
\end{proof}
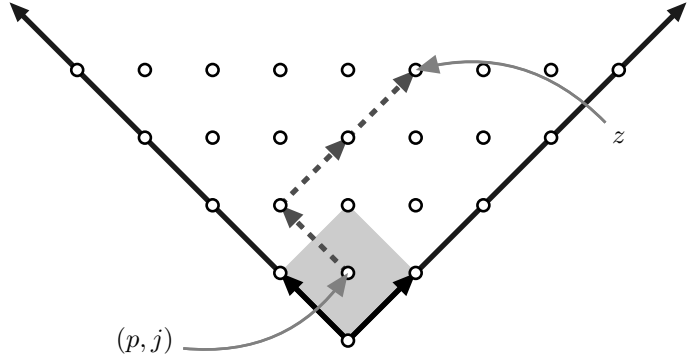
\begin{figure}[tb]
  \centering
  \begin{tikzpicture}[scale=.9]
    \foreach \x in {-1,...,9} {
      \foreach \y in {0,...,8} {
        \coordinate (a\x\y) at (\x,\y);
      }
    }

    \draw[line width=0pt,color=black!20,fill=black!20] (a40) -- (a31) -- (a42) -- (a51) -- cycle;

    \draw[line width=.7pt,black!90,arrows=-triangle 45, postaction={draw, line width=2pt, shorten >=3pt, -}] (a40) -- (a-15);
    \draw[line width=.7pt,black!90,arrows=-triangle 45, postaction={draw, line width=2pt, shorten >=3pt, -}] (a40) -- (a95);

    \draw[line width=1pt,black!70,dashed,arrows=-triangle 45, postaction={draw, line width=2pt, shorten >=3pt, -}] (a41) -- (a32);
    \draw[line width=1pt,black!70,dashed,arrows=-triangle 45, postaction={draw, line width=2pt, shorten >=3pt, -}] (a32) -- (a43);
    \draw[line width=1pt,black!70,dashed,arrows=-triangle 45, postaction={draw, line width=2pt, shorten >=3pt, -}] (a43) -- (a54);

    \draw[line width=1pt,arrows=-triangle 45, postaction={draw, line width=2pt, shorten >=3pt, -}] (4,0) -- (3,1);
    \draw[line width=1pt,arrows=-triangle 45, postaction={draw, line width=2pt, shorten >=3pt, -}] (4,0) -- (5,1);

    \foreach \x/\y in {4/0,3/1,4/1,5/1,2/2,3/2,4/2,5/2,6/2,1/3,2/3,3/3,4/3,5/3,6/3,7/3,0/4,1/4,2/4,3/4,4/4,5/4,6/4,7/4,8/4} {
      \draw[line width=1pt,fill=white]  (a\x\y) circle (2.5pt);% 
    }

    \node at (a10) {$(p,j)$} edge[line width=1pt,black!50,bend right=30,arrows=-triangle 45, postaction={draw, line width=1pt,  shorten >=3pt, -}] (a41);
    \node at (a83) {$z$} edge[line width=1pt,black!50,bend right=30,arrows=-triangle 45, postaction={draw, line width=1pt, shorten >=3pt, -}] (a54);
  \end{tikzpicture}

  \bigskip

  \caption{Decomposing $z$ in the proof of Lemma~\ref{sums}.}
  \label{fig:zdecomp}
\end{figure}

\begin{proof}[Proof of Theorem~\ref{main}]
By successive application of Proposition~\ref{prop:projection} we can
find a lattice projection $P \to Q$ with $\dim(Q)=n' \leq n$ such that
$\qd(P) \leq \qd(Q)$ and $\ad{Q}{s} = \{x\}$ for $s :=
\qd(Q)^{-1}$. By observing that $d(Q)+(n-n') \leq d(P)$,
we see that $d(P)<n$ implies $d(Q)<n'$ and, moreover, if the desired
statement holds for $Q$, then it also holds for $P$.
Hence, we may assume that $s = \qd(P)^{-1}$ and $\ad{P}{s} = \{x\}$.

By Lemma~\ref{lemma:span}, $P$ is contained in a rational polytope
$\tilde P$ with $s = \qd(\tilde P)^{-1}$ and $\ad{\tilde P}{s} = \{x\}$
so that all facets of $\tilde P$ have distance $s$ from $x$.
Let $\sigma \subseteq \tilde\sigma \subseteq \R^{n+1}$ be the
(full-dimensional, pointed) cones over $P \times \{1\} \subseteq
\tilde P \times \{1\}$, and let $u \in (\R^{n+1})^*$ be the last
coordinate functional. As $u$ evaluates positively on all vertices of
$\tilde P \times \{1\}$, we have $u \in \intr\tilde\sigma^\dual
\subseteq \intr\sigma^\dual$.
Let us define the lattice polytope
\[R := \conv(\{0\} \cup \{\eta \ : \ \eta \text{ primitive facet
  normal of } \tilde\sigma \}) \subseteq (\R^{n+1})^*.\]
In order to invoke Lemma~\ref{lemma:Batyrev-Nill}(3), we will show
that $R$ has high codegree so that $u$ can be decomposed into a sum of
many lattice points in $\tilde\sigma^\dual \subseteq \sigma^\dual$ by Lemma~\ref{sums}.

To this end, observe that $\pro{\eta}{(x,1)} = s$ for every primitive
facet normal $\eta$ of $\tilde\sigma$, so that $R$ is an
($n+1$)-dimensional pyramid with apex $0$:
$$
R = \tilde\sigma^\dual \ \cap \ \{ y \in (\R^{n+1})^* \ : \
\pro{y}{(x,1)} \le s \}\,.$$

\begin{center}
\begin{tikzpicture}[x=.3cm,y=-.3cm]

% objects at depth 50:
\draw[black] (8,16) -- (2,4);
\draw[black] (8,16) -- (14,4);
\draw[dashed,black] (8,16) -- (2,6);
\draw[line width=4.5bp,black] (7,14) -- (9,14);
\draw[black] (6.8,14) -- (9.5,14);
\draw[dashed,black] (8,16) -- (14,8);
\draw[very thick,arrows=-to,red] (13.4,14) -- (13.4,12);
\draw[very thick,red] (13,14) -- (13.8,14);
\draw[very thick,arrows=-to,red] (11.2,11.6) -- (9.6,10.4);
\draw[very thick,red] (10.9,12) -- (11.5,11.2);
\path (6,14.4) node[text=black,anchor=base east] {\fontsize{10.8}{12.96}\selectfont{}$\tilde P \times 1 \supseteq P \times 1$};
\path (4.5,7.3) node[text=black,anchor=base west] {\fontsize{10.8}{12.96}\selectfont{}$\sigma$};
\path (13.0,9.6) node[text=black,anchor=base east] {\fontsize{10.8}{12.96}\selectfont{}$\tilde\sigma$};
\path (9.5,10.6) node[text=red,anchor=base east] {\fontsize{10.8}{12.96}\selectfont{}$\eta_i$};
\path (12.9,13.6) node[text=red,anchor=base east] {\fontsize{10.8}{12.96}\selectfont{}$u$};

% objects at depth 40:
\draw[fill=blue] (8,13.95) circle (0.08cm);
\draw[very thick,arrows=-to,blue] (8,16) -- (8,14.2);
\path (8,13.3) node[text=blue,anchor=base] {\fontsize{10.8}{12.96}\selectfont{}$(x,1)$};

\end{tikzpicture}%

\qquad
\begin{tikzpicture}[x=.3cm,y=-.3cm]

% objects at depth 60:
\path[fill=white!75!black,arrows=-to] (10.25333,13) -- (8,16) -- (4,13);
\draw[very thick,blue] (1,13) -- (11.5,13);

% objects at depth 50:
\draw[black] (8,16) -- (0,11);
\draw[dashed,black] (8,16) -- (14,8);
\draw[black] (8,16) -- (14,10);
\draw[dashed,black] (8,16) -- (0,10);
\draw[very thick,arrows=-to,red] (8,16) -- (4,13);
\draw[black] (4,13) -- (10.25333,12.99333);
\draw[very thick,blue] (6.6,12.6) -- (7.4,12.6);
\draw[very thick,arrows=-to,blue] (7,12.6) -- (7,10.6);
\path (6.8,12.0) node[text=blue,anchor=base east] {\fontsize{10.8}{12.96}\selectfont{}$(x,1)$};
\path (7.5,9.13333) node[text=red,anchor=base east] {\fontsize{10.8}{12.96}\selectfont{}$u$};
\path (8.76667,10.66667) node[text=black,anchor=base west] {\fontsize{10.8}{12.96}\selectfont{}$\tilde\sigma^\dual$};
\path (12.03333,10.53333) node[text=black,anchor=base west] {\fontsize{10.8}{12.96}\selectfont{}$\sigma^\dual$};
\path (4.3,14.2) node[text=red,anchor=base east] {\fontsize{10.8}{12.96}\selectfont{}$\eta_i$};

% objects at depth 40:
\draw[very thick,arrows=-to,red] (8,16) -- (8,8.1);
\path (8,14.54) node[anchor=base east] {\fontsize{10.8}{12.96}\selectfont{}$R$};
\path (11.6,13.5) node[text=blue,anchor=base west] {\fontsize{10.8}{12.96}\selectfont{}$\{\langle \cdot, (x,1) \rangle = s\}$};

\end{tikzpicture}%

\end{center}

Let us bound the height of an interior lattice point of
$\tilde\sigma^\dual$. Assume there is some $y \in \intr
\tilde\sigma^\dual \cap (\Z^{n+1})^*$ such that $\pro{y}{(x,1)} <
1$. Because $x \in P$ is a convex combination of vertices there is
some vertex $w \in P \times \{1\}$ such that $\pro{y}{w} <
1$. However, $y \in \intr \tilde\sigma^\dual \subseteq \intr
\sigma^\dual$ implies $0 < \pro{y}{w}$. This contradicts $\pro{y}{w}
\in \Z$. Now, $\pro{\,\cdot\,}{(x,1)} \le s$ is a valid inequality for
$R$, and by the above $\intr(kR) \cap (\Z^{n+1})^* = \emptyset$ for $k
\leq s^{-1}=\qd(P)$.

On the other hand, $u$ is a lattice point in $\intr
\tilde\sigma^\dual$ with $\pro{u}{(x,1)} = 1$. So $u \in \intr(kR)
\cap (\Z^{n+1})^*$ for $k > \qd(P)$. Hence, $r:=\cd(R) = \lfloor
\qd(P) \rfloor + 1$.

From Lemma \ref{sums} applied to $R$ and $(u,r) \in \pos(R \times
\{1\}) \cap (\Z^{n+2})^*$ we conclude that 
\[(u,r) = k (0,1) + (\eta_1, 1) + \cdots + (\eta_g,1) + (p,j)\]
for a natural number $k$, for (not necessarily different) non-zero
vertices $\eta_1, \ldots, \eta_g$ of $R$ and for a lattice point $p
\in (j R) \cap (\Z^{n+1})^*$ with the property that $(p,j)=(0,0)$ or
$1 \leq j \leq n+2-r$.
% and $p$ is not a multiple of some vertex $x_1, \ldots, x_g$. %

From $u \not\in (r-2)R$ and $(u,r-2)= (k-2) (0,1) +
(\eta_1, 1) + \cdots + (\eta_g,1) + (p,j)$ we conclude that $k-2 < 0$,
that is, $k \in \{0,1\}$. Further, if $k=1$, then $u \in (r-1)R
\setminus \intr((r-1)R)$ so that $1=\pro{u}{(x,1)}=(r-1)s$, that is,
$\qd(P) \in \Z$.

Let us first consider the case $k=0$. Since $u \in \intr(r R)$, we observe that $(u,r) \not\in \pos((\eta_1,1), \ldots, (\eta_g,1))$, thus, 
$(p,j) \not= (0,0)$. 
Therefore, $r=g+j$, and $u$ splits into a sum of at least $g +1 \geq r+1-(n+2-r) = 2 \lfloor \qd(P) \rfloor - n + 1$ 
non-zero lattice vectors in $\tilde\sigma^\dual$. Hence, Lemma~\ref{lemma:Batyrev-Nill}(3) yields that 
$P$ is a Cayley polytope of lattice polytopes in $\R^m$ with $m \leq n+1-(g+1) \leq 2 (n - \lfloor \qd(P) \rfloor)$.

It remains to deal with the case $k=1$. Here, we have already observed that $\qd(P) \in \Z$. 
If $(p,j)=(0,0)$, then $u$ splits into a sum of at least $g+1=r$ non-zero lattice points in $\tilde\sigma^\dual$, 
so Lemma~\ref{lemma:Batyrev-Nill}(\ref{item:lemma:Batyrev-Nill3}) yields that $P$ is the Cayley polytope of 
lattice polytopes in $\R^m$ with $m \leq n+1-(g+1) \leq n+1-\qd(P)$. Finally, if $(p,j) \not= (0,0)$, then 
$r = g+1+j$, so we again deduce from Lemma~\ref{lemma:Batyrev-Nill}(\ref{item:lemma:Batyrev-Nill3}) that $P$ is the Cayley polytope of 
$g+1=r-j \geq r-(n+2-r)=2r-n-2$ lattice polytopes in an ambient space of dimension $n+1-(2r-n-2) = 2(n-\qd(P)) +1$.
\end{proof}
\begin{remark}
  Statement and proof of Theorem~\ref{new} generalize 
  Theorem~3.1 in \cite{HNP09}, which proves 
  Conjecture~\ref{C2} in the case of {\em Gorenstein polytopes}. 
  A Gorenstein polytope $P$ with codegree $c$ can be characterized 
  by the property that $P$ is a $\Q$-normal lattice polytope with
  $\ad{(c P)}{1}$ being a lattice point.
\end{remark}
\begin{corollary}
  Let $P$ be an $n$-dimensional lattice polytope. If $n$ is odd and
  $\qd(P) > \frac{n+1}{2}$, or if $n$ is even and $\qd(P) \geq
  \frac{n+2}{2}$, then $P$ is a Cayley polytope.
\label{cor}
\end{corollary}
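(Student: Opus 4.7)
The plan is to derive Corollary \ref{cor} directly from Theorem \ref{main}, after disposing of the degenerate case $P \cong \Delta_n$. Observe first that the standard simplex $\Delta_n$ is itself a nontrivial Cayley polytope: the projection onto the last coordinate sends the vertices $0, e_1, \ldots, e_n$ into $\{0,1\}$, giving lattice width one. So we may assume $P \not\cong \Delta_n$ and aim to apply Theorem~\ref{main}. What remains is to verify the hypothesis $n > d(P)$ under each of the two alternatives in the statement.

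For this I would split into four subcases according to the parity of $n$ and whether $\qd(P) \in \N$, using crucially that both $(n+1)/2$ (for $n$ odd) and $(n+2)/2$ (for $n$ even) are integers. When $n$ is odd and $\qd(P) > (n+1)/2$, a non-integer $\qd(P)$ satisfies $\lfloor \qd(P) \rfloor \geq (n+1)/2$, so $d(P) = 2(n - \lfloor \qd(P) \rfloor) \leq n-1$; an integer $\qd(P)$ must satisfy $\qd(P) \geq (n+3)/2$, so $d(P) = 2(n - \qd(P)) + 1 \leq n-2$. The even case with $\qd(P) \geq (n+2)/2$ is completely analogous: a non-integer $\qd(P)$ forces $\lfloor \qd(P) \rfloor \geq (n+2)/2$, giving $d(P) \leq n-2$, and an integer $\qd(P) \geq (n+2)/2$ gives $d(P) \leq n-1$.

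In every case $n > d(P)$, so Theorem \ref{main} presents $P$ as a Cayley sum $P_0 * \cdots * P_t$ of lattice polytopes in $\R^m$ with $m \leq d(P) < n$. Since the Cayley sum lives in $\R^m \times \R^t$, the inequality $n = \dim P \leq m+t$ forces $t \geq n - d(P) \geq 1$, so the resulting Cayley decomposition has length $t+1 \geq 2$, which is exactly the notion of (nontrivial) Cayley polytope used in the paper. There is no substantive obstacle beyond the case-by-case bookkeeping with floors and parities; all of the real work has been done in Theorem~\ref{main}.
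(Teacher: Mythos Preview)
Your argument is correct and is exactly the intended derivation: the paper states the corollary without proof, leaving it as a direct consequence of Theorem~\ref{main}, and your case analysis on the parity of $n$ and integrality of $\qd(P)$ verifies the hypothesis $n>d(P)$ precisely as needed. Your handling of the excluded case $P\cong\Delta_n$ and the observation that $m\leq d(P)<n$ forces $t\geq 1$ (so the Cayley structure is nontrivial) are both fine.
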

There is no obvious analogue  for {\em rational} polytopes. For instance, 
for $\epsilon > 0$, the $\Q$-codegree of $(1+\epsilon) \Delta_n$ equals $(n+1)/(1+\epsilon)$, so it gets arbitrarily close to $n+1$, 
however its lattice width is always strictly larger than one.

Theorem~\ref{new} proves Conjecture~\ref{C2}, if $\up{\qd(P)} = \cd(P)$. 
Therefore, using Proposition~\ref{estimate} we get the following new result.
\begin{corollary}
Conjecture~\ref{C2} holds, if $\NF(P)$ is Gorenstein and $P$ is $\Q$-normal.
\end{corollary}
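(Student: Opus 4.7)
The plan is to combine Proposition~\ref{estimate} (with Gorenstein index $r=1$) with the assumption of $\Q$-normality to force $\up{\qd(P)} = \cd(P)$, and then invoke Theorem~\ref{new}. First I would dispose of the trivial case $P \cong \Delta_n$: here $\cd(P)=n+1$, and $\Delta_n$ is manifestly the Cayley sum of $n+1$ points in $\R^0$, so the conclusion of Conjecture~\ref{C2} holds vacuously at dimension $0 = 2(n+1-\cd(P))$.

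For $P \not\cong \Delta_n$, Proposition~\ref{estimate} with $r=1$ gives $\cd(P) - 1 < \nf(P)$, and $\Q$-normality identifies $\nf(P) = \qd(P)$. Combined with the universal estimate $\qd(P) \leq \cd(P)$ from Section~\ref{sec:codegree}, this sandwich forces
\[\up{\qd(P)} = \cd(P).\]
Next I would unpack the definition of $d(P)$ in the two cases. If $\qd(P) \in \N$, then $\qd(P) = \cd(P)$ and $d(P) = 2(n+1-\cd(P)) - 1$; if $\qd(P) \notin \N$, then $\lfloor \qd(P) \rfloor = \cd(P)-1$ and $d(P) = 2(n+1-\cd(P))$. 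In both cases $d(P) \leq 2(n+1-\cd(P))$.

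The hypothesis $\cd(P) > \frac{n+2}{2}$ of Conjecture~\ref{C2} then yields $d(P) \leq 2(n+1-\cd(P)) < n$, so the dimension assumption $n > d(P)$ of Theorem~\ref{new} is satisfied. Applying Theorem~\ref{new} realises $P$ as a Cayley sum of lattice polytopes in $\R^m$ with $m \leq d(P) \leq 2(n+1-\cd(P))$, which is exactly the conclusion of Conjecture~\ref{C2}. The main conceptual step—and the only non-routine one—is the observation that combining Gorensteinness with $\Q$-normality pins $\qd(P)$ between $\cd(P)-1$ and $\cd(P)$; the rest is a bookkeeping exercise translating between $\qd$, $\cd$, and $d(P)$.
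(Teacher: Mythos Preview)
Your argument is correct and follows exactly the line the paper indicates: use Proposition~\ref{estimate} with $r=1$ together with $\Q$-normality to squeeze $\qd(P)$ into the interval $(\cd(P)-1,\cd(P)]$, hence $\up{\qd(P)}=\cd(P)$, and then feed this into Theorem~\ref{new}. You have simply made explicit the bookkeeping (the case split on $\qd(P)\in\N$, the verification that $d(P)\le 2(n+1-\cd(P))<n$, and the treatment of $\Delta_n$) that the paper compresses into a single sentence.
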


If $P$ is smooth with $\cd(P) > \frac{n+2}{2}$, 
then it was shown in \cite{DDP09,DN10} that $P \cong P_0 * \cdots *
P_t$, where $t+1 = \cd(P)=\qd(P)$, and $P_0, \ldots, P_t$ have the
same normal fan. The proof relies on algebraic geometry, no purely
combinatorial proof is known.

\subsection{A sharper conjecture}
\label{q-conj}
We conjecture that in Corollary~\ref{cor} the condition $\qd(P) >
\frac{n+1}{2}$ should also be sufficient in even dimension. 
This is motivated by an open question in algebraic geometry, see Remark~\ref{ag-motivation}. 
We can prove this conjecture  in the case of lattice simplices.

\begin{proposition}
Let $P \subseteq \R^n$ be an $n$-dimensional rational simplex. Let $a_i$ be the
lattice distance of the $i$-th vertex of $P$ from the facet of $P$ not containing the vertex. Then
\[\nf(P) = \qd(P) = \sum_{i=0}^n \frac{1}{a_i}.\]
\label{simplex}
\end{proposition}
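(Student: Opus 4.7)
The plan is to parameterize points of $P$ via barycentric coordinates and turn both invariants into linear optimization problems. Label the vertices $v_0,\ldots,v_n$ of $P$ so that $v_i$ is the unique vertex not on $F_i$. Since $d_{F_j}$ is an affine function that vanishes on the face opposite to $v_j$ and takes the value $a_j$ at $v_j$, for any affine combination $x=\sum_i \lambda_i v_i$ with $\sum_i\lambda_i=1$ I have $d_{F_j}(x)=\lambda_j a_j$. Hence the normalization $\sum_i\lambda_i=1$ becomes the single linear identity
\[\sum_{j=0}^n \frac{d_{F_j}(x)}{a_j}=1\]
holding on all of $\R^n$.

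For the $\Q$-codegree, I would use Proposition~\ref{prop:equi}(\ref{item:qcd2}) to rewrite $\qd(P)^{-1} = \max_{x\in P}\min_j d_{F_j}(x)$. Writing $d_{F_j}(x)=s+t_j$ with $t_j\ge 0$ and substituting into the displayed identity gives $s\cdot\sum_j (1/a_j) + \sum_j (t_j/a_j) = 1$, whose maximum in $s$ is clearly attained at $t_j\equiv 0$. This yields $\qd(P)=\sum_j 1/a_j$ immediately.

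For the nef value, I would exhibit the vertices of $\adP{s}$ for every $s\in[0,\qd(P)^{-1})$ explicitly. For each $j$, let $v_j(s)$ be the point whose barycentric coordinates are $\lambda_i=s/a_i$ for $i\neq j$ and $\lambda_j=1-s\sum_{i\neq j}1/a_i$. A short calculation shows $d_{F_i}(v_j(s))=s$ for $i\neq j$ and $d_{F_j}(v_j(s))=a_j\bigl(1-s\sum_{i\neq j}1/a_i\bigr)$, and the condition $d_{F_j}(v_j(s))\ge s$ simplifies precisely to $s\le \qd(P)^{-1}$. Moreover, $v_j(s)-v_i(s) = \bigl(1-s\sum_k 1/a_k\bigr)(v_j-v_i)$, so for $s<\qd(P)^{-1}$ the $n+1$ points $v_j(s)$ remain affinely independent and form a simplex combinatorially isomorphic to $P$. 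Consequently $\NF(\adP{s})=\NF(P)$ for all such $s$, which gives $\nf(P)^{-1}\ge \qd(P)^{-1}$. Combined with the general inequality $\qd(P)\le\nf(P)$ noted in Section~\ref{sec:codegree}, both invariants equal $\sum_j 1/a_j$.

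The argument is essentially routine once the barycentric calculus is set up; the only mildly subtle step is verifying that the full simplicial combinatorial type of $\adP{s}$ is preserved throughout the interval $[0,\qd(P)^{-1})$, and this is exactly what the displacement identity $v_j(s)-v_i(s) = (1-s\sum_k 1/a_k)(v_j-v_i)$ delivers, showing that no pair of vertices collides before $s$ reaches $\qd(P)^{-1}$.
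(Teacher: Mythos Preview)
Your argument is correct and rests on the same linear identity as the paper's proof, namely $\sum_{j} d_{F_j}(x)/a_j = 1$; the paper packages this as the affine embedding $y \mapsto (d_{F_0}(y),\ldots,d_{F_n}(y))$ onto the simplex $\{y\in\R_{\ge 0}^{n+1}:\sum y_i/a_i=1\}$, which is just your barycentric parametrization rescaled. The only real difference is that the paper simply asserts $\nf(P)^{-1}=\qd(P)^{-1}=s$ for the equidistant point, whereas you spell this out by tracking the vertices $v_j(s)$ and observing via the displacement identity that $\adP{s}$ is a positive homothet of $P$ for $s<\qd(P)^{-1}$---a welcome bit of extra detail, but not a different idea.
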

\begin{proof}
  Let $x$ be the unique point that has the same lattice distance $s$ from
  each facet. Then $\nf(P)^{-1} = \qd(P)^{-1} = s$.  Fix a basis
  $\{e_0,\ldots,e_n\}$ for $\R^{n+1}$ and consider the affine
  isomorphism $P \to \conv(a_0 e_0, \ldots, a_n e_n)=\{y \in
  \R_{\ge0}^{n+1} \,:\, \sum_{i=0}^n \nicefrac{y_i}{a_i} = 1\}\subset
  \R^{n+1}$ given by $y \mapsto (d_{F_0}(y), \ldots, d_{F_n}(y))$. The
  point $x$ is mapped to $c:=(s, \ldots, s)$, so
  $\nicefrac1s=\sum_{i=0}^n \nicefrac1{a_i}$.
\end{proof}

\begin{corollary}
  Let $P \subseteq \R^n$ be an $n$-dimensional lattice simplex. 
\begin{enumerate}
\item\label{item:cor:pyro1} If $\qd(P) >
  \frac{n+1}{2}$ (or $\qd(P) = \frac{n+1}{2}$ and $a_i \not= 2$ for
  some $i$), then $P$ is a lattice pyramid.
\item\label{item:cor:pyro2} If $\qd(P) \geq \frac{n+1}{2}$ and $P \not\cong 2 \Delta_n$, then $P$ has lattice width one.
\end{enumerate}
\label{pyro}
\end{corollary}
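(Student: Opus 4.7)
The plan is to combine Proposition~\ref{simplex} with a mod-$2$ argument on the facet normals. By Proposition~\ref{simplex}, $\qd(P) = \sum_{i=0}^n 1/a_i$, and for a lattice simplex each $a_i \in \Z_{\geq 1}$. Whenever all $a_i \geq 2$, one has $\sum 1/a_i \leq (n+1)/2$, with equality precisely when every $a_i = 2$.

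For part~(\ref{item:cor:pyro1}), both hypotheses force some $a_j = 1$: if $\qd(P) > (n+1)/2$ the case all $a_i \geq 2$ is impossible, and if $\qd(P) = (n+1)/2$ with some $a_i \neq 2$, then in the all $a_i \geq 2$ alternative there must be some $a_k \geq 3$, giving $\sum 1/a_i \leq (n+1)/2 - 1/6$, a contradiction. Once $a_j = 1$, the vertex $v_j$ sits at lattice distance one from the opposite facet $F_j$, so $P$ is a lattice pyramid with apex $v_j$.

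For part~(\ref{item:cor:pyro2}), if the hypothesis of~(\ref{item:cor:pyro1}) holds then $P$ is a lattice pyramid, and the primitive inner normal of the base is an integer functional taking only the values $0$ and $1$ on $P$, certifying lattice width one. Otherwise $\qd(P) = (n+1)/2$ and all $a_i = 2$; the task reduces to proving that $P \not\cong 2\Delta_n$ implies lattice width one. Normalize so that $v_0 = 0$, and let $V := [v_1 \mid \cdots \mid v_n]$ be the integer matrix of columns $v_j$ and $N$ the integer matrix whose rows are the primitive inner normals $n_1, \ldots, n_n$ of the facets through $0$. Then $\pro{n_i}{v_j} = 2\delta_{ij}$, i.e., $N V = 2 I_n$, so $|\det N|\cdot |\det V| = 2^n$. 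A dual-basis computation (the $w_j$ defined by $\pro{n_i}{w_j} = \delta_{ij}$ lie in $\Z^n$ exactly when $N \in GL_n(\Z)$, and satisfy $v_j = 2 w_j$) shows that $|\det V| = 2^n$ is equivalent to $P \cong 2\Delta_n$; hence $P \not\cong 2\Delta_n$ forces $|\det N| > 1$, and since $|\det N|$ divides $2^n$, it must be even.

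The main step, and the main obstacle, is extracting a width-one functional from this parity. Since $\det N$ is even, the rows $n_1, \ldots, n_n$ are linearly dependent modulo $2$, so some nonempty $I \subseteq \{1, \ldots, n\}$ satisfies $\sum_{i \in I} n_i \in 2 (\Z^n)^*$. The vector $u := \frac{1}{2} \sum_{i \in I} n_i$ then lies in $(\Z^n)^*$, and using $\pro{n_i}{v_j} = 2\delta_{ij}$ one computes $\pro{u}{v_j} = 1$ if $j \in I$ and $\pro{u}{v_j} = 0$ otherwise, while $\pro{u}{v_0} = 0$. Thus $u$ takes only the values $0$ and $1$ on the vertex set of $P$, certifying lattice width one and completing the proof.
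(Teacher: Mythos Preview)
Your proof is correct and follows essentially the same strategy as the paper's. The paper phrases the mod-$2$ step via the image lattice $\Lambda$ of the distance map $y \mapsto (d_{F_1}(y),\ldots,d_{F_n}(y))$ and a linear relation on $\Lambda/2\Z^n$, while you work directly with the matrix $N$ of facet normals and a row dependence modulo $2$; since $\Lambda = N\Z^n$, these are the same argument, and in both cases the width-one functional is $u=\tfrac12\sum_{i\in I} n_i$.
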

\begin{proof} 
  Assume that $P$ is not a lattice pyramid. Then $a_i \geq 2$ for all   $i=0, \ldots, n$. Hence,
  \begin{align*} 
    \qd(P) &= \sum_{i=0}^n \frac{1}{a_i} \leq \frac{n+1}{2}.
\end{align*} 
This proves \ref{item:cor:pyro1}. For \ref{item:cor:pyro2}, let us assume that $a_i=2$ for all $i=0, \ldots, n$. 
We consider the injective affine map $\R^n \to \R^n$, $y \mapsto (d_{F_1(y)}, \ldots, d_{F_n}(y))$. 
Note that the image of $P$ is $2 \Delta_n = \conv(0, 2e_1, \ldots, 2 e_n)$. 
Let us denote the image of $\Z^n$ by $\Lambda$. It satisfies $2 \Z^n \subseteq \Lambda \subseteq \Z^n$. 
If $\Lambda = \Z^n$, then $P \cong 2 \Delta_n$. Hence, 
our assumption yields that the reduction mod $2$ is a proper linear subspace $\Lambda/2\Z^n \subset (\Z/2\Z)^n$. 
Therefore,  it must satisfy an equation 
$\sum_{i \in I} x_i \equiv 0 \mod 2$ for some subset $\varnothing\ne I \subseteq \{1, \ldots, n\}$. The linear functional 
$\nicefrac12 (\sum_{i \in I} x_i)$ defines an element $\lambda \in\Lambda^*$ such that $\lambda(2 e_i)=1$ if
$i\in I$ and $0$ otherwise. Hence, $P$ has lattice width one in the 
direction of the pullback of $\lambda$. 
\end{proof}

\begin{example}
It is tempting to guess that $\qd(P) = \frac{n+1}{2}$ and $a_i = 2$
for all $i$ implies that $P \cong 2 \Delta_n$.
However, here is another example:
$ \conv \left[
  \renewcommand{\arraycolsep}{1.5pt}
  \renewcommand{\arraystretch}{.7}
  \begin{array}{>{\scriptstyle}r>{\scriptstyle}r>{\scriptstyle}r>{\scriptstyle}r>{\scriptstyle}r>{\scriptstyle}r}
    0&0&1&1\\
    0&1&0&1\\
    0&1&1&0
  \end{array}
\right] .$
\end{example}
A corresponding result for the codegree was proven in \cite{Nil08} where it is shown that a
lattice $n$-simplex is a lattice pyramid, if $\cd(P) \geq \frac{3}{4}
(n + 1)$. Let us stress that Conjecture~\ref{C2} is still open for
lattice simplices.

\section{Adjunction theory of toric varieties}\label{AG}

In this section, we explain the connection between  the previous combinatorial results and the adjunction theory of toric varieties.

\subsection{General notation and definitions.}
Let $X$ be a normal projective algebraic variety of dimension $n$ with
canonical class $K_X$ defined over the complex numbers. We assume
throughout that $X$ is {\em $\Q$-Gorenstein} of index $r$, i.e., $r$ is
the minimal  $r \in \N_{> 0}$ such that $r K_X$ is a Cartier divisor.
$X$ is called {\em Gorenstein}, if $r=1$.

Let $L$ be an ample line bundle (we will often use the same symbol for
the associated Weil divisor) on $X$. We use the additive notation to
denote the tensor operation in the Picard group, ${\rm Pic}(X).$ When
we consider  (associated) $\Q$-divisors the same additive notation
will be used for the operation in the group ${\rm Div(X)}\otimes\Q.$

Recall that $L$ is {\em nef}, resp. {\em ample}, if it has
non-negative, resp. positive, intersection with all irreducible curves
in $X.$ Moreover $L$ is said to be {\em big} if the global sections of
some multiple define a birational map to a projective space. If a line
bundle is nef, then being big is equivalent to having positive
degree. It follows that every ample line bundle is nef and big.
The pair $(X,L),$ where $L$ is an ample line bundle on $X$ is often
called a {\em polarized algebraic variety}.
The linear systems $|K_X+sL|$ are called {\em adjoint linear
  systems}. These systems define classical invariants which have been
essential tools in the existent classification of projective
varieties. In what follows we summarize what is essential to
understand the results in this paper. More details can be found in
1.5.4. and 7.1.1. of \cite{BS95}.

\begin{definition}
\item Let $(X,L)$ be a polarized variety. 
\begin{enumerate}
\item The {\em unnormalized spectral value} of $L$ is defined as
\[ \begin{array}{ccc}
 \qd(L) &:= &\sup \{s \in \Q \,:\, h^0(N(K_X + s L)) = 0   \text{ for all positive integers } N \\
 &&\text{  such that }N(K_X + s L)\text{  is an integral Cartier divisor} \}.\end{array}\]

Note that, $\qd(L) < \infty$ follows from $L$ being big.
\item The {\em nef value} of $L$ is defined as 
\[\nf(L) := \min \{s \in \R \,:\, K_X + s L \textup{ is nef}\}.\]
\end{enumerate}
 \end{definition}
 It was proven by Kawamata that $\nf(L)\in\Q$. 
Moreover if $r\nf=\frac{u}{v},$ where $u$ and $v$ are coprime,  then the  linear system $|m(vrK_X+uL)|$
is globally generated for a big enough integer $m.$ The corresponding morphism, $f:X\to
\P^{M}=\P(H^0(m(vrK_X+uL))),$  has  a Remmert-Stein factorization as $f=p\circ
\phi_{\nf},$ where $\phi_{\nf}:X\to Y$ is a morphism with connected
fibers onto a normal variety $Y,$ called the {\em nef value morphism}.
The rationality of $\qd(L)$ was only shown very recently
\cite[1.1.7]{BCHMcK10} as a consequence of the existence of the minimal model
program. 

Observe that the invariants above can be visualized as follows, see Figure~\ref{nef-cone}. 
Traveling from  $L$ in the direction of the vector $K_X$ in the Neron-Severi space ${\rm NS}(X)\otimes\R$ of 
divisors, $L+\frac{1}{\qd(L)}K_X$ is the meeting point with the cone of effective divisors ${\rm Eff}(X)$ and $L+\frac{1}{\nf(L)} K_X$ is the meeting point with the cone of nef-divisors ${\rm Nef}(X).$
We now summarize some well-known results which will be used in this section.
\begin{figure}
\centering
\def\svgwidth{10cm}
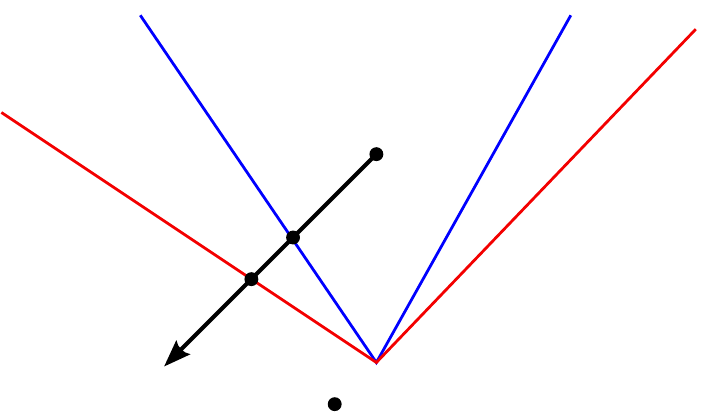
\caption{Illustrating $\qd(L)$ and $\nf(L)$}
\label{nef-cone}
\end{figure}
\begin{proposition}\label{kawa}In the above notation:
\begin{enumerate}
\item\label{item:prop:kawa1} $\nf(L)$ is the largest $s \in \Q$ such that $K_X + s L$ is nef but not ample.
\item\label{item:prop:kawa2} $\qd(L) \leq \nf(L), $ 
with equality if and only if $\phi_{\nf}$ is not birational.
\item\label{item:prop:kawa3} Let $r \nf(L) = u/v$ with coprime positive integers $u,v$. Then 
\[u \leq r (n+1),\]
in particular, $\nf(L) \leq r (n+1)$.
\item\label{item:prop:kawa4} $\qd(L) \leq n+1$.
\end{enumerate}
\end{proposition}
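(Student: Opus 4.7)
My plan is to handle the four assertions of Proposition~\ref{kawa} in order, mostly by combining the definitions with standard results from birational geometry and citing the heavy guns where appropriate.

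For (\ref{item:prop:kawa1}), I would argue directly from the definition of $\nf(L)$. The nef cone in $\mathrm{NS}(X) \otimes \R$ is closed with the ample cone as its interior, so $K_X + \nf(L) L$ is nef but fails to be ample, while for any rational $s > \nf(L)$ the class $K_X + sL = (K_X + \nf(L) L) + (s - \nf(L)) L$ is a sum of a nef and an ample divisor and is therefore ample. This identifies $\nf(L)$ as the largest rational $s$ for which $K_X + sL$ is nef but not ample.

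For (\ref{item:prop:kawa2}), the inequality $\qd(L) \leq \nf(L)$ is immediate: whenever $s > \nf(L)$, the divisor $K_X + sL$ is ample, hence big, so a suitable Cartier multiple carries a nonzero global section, which forces $\qd(L) \leq s$. For the equivalence with birationality of $\phi_{\nf}$, the key observation is that $\phi_{\nf}$ is birational precisely when $vrK_X + uL$, equivalently $K_X + \nf(L) L$, is big. If this class is big, openness of the big cone provides some rational $s < \nf(L)$ with $K_X + sL$ still big, giving $\qd(L) < \nf(L)$. Conversely, if $\qd(L) < \nf(L)$, I would pick a rational $s$ strictly between them with $K_X + sL$ effective after some Cartier multiple, and then add the ample class $(\nf(L)-s) L$ to conclude that $K_X + \nf(L) L$ is big.

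Statement (\ref{item:prop:kawa3}) is Kawamata's Rationality Theorem applied to the pair $(X,L)$; the bound $u \leq r(n+1)$ is its precise content, and rewriting $\nf(L) = u/(rv)$ with $v \geq 1$ gives the stated consequence. I would simply cite this from \cite{BS95}.

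The fourth statement is the main obstacle, since the straightforward chain $\qd(L) \leq \nf(L) \leq r(n+1)$ from (\ref{item:prop:kawa2}) and (\ref{item:prop:kawa3}) carries an unwanted factor of the Gorenstein index $r$. To sharpen it to $n+1$, I would invoke the classical adjunction-theoretic fact that for any ample line bundle $L$ on a normal projective $\Q$-Gorenstein variety $X$ of dimension $n$, the $\Q$-divisor $K_X + (n+1)L$ is pseudoeffective (a consequence of Kawamata non-vanishing applied after passing to a suitable resolution and clearing denominators). It then follows from the definition of $\qd(L)$ that every rational $s > n+1$ produces a multiple of $K_X + sL$ with a nonzero global section, whence $\qd(L) \leq n+1$. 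This non-vanishing input is the chief technical ingredient, but it is well documented in the adjunction-theory literature.
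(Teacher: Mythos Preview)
Your proposal is correct and follows essentially the same route as the paper. The paper's own proof is almost entirely a list of citations to \cite{BS95} (specifically 1.5.5 for (\ref{item:prop:kawa1}), 7.1.6 for (\ref{item:prop:kawa2}), Kawamata's rationality theorem for (\ref{item:prop:kawa3}), and 7.1.3 for (\ref{item:prop:kawa4})); you have unpacked the same arguments, using the identical structural ingredients---closedness of the nef cone, openness of the big cone inside the pseudoeffective cone, Kawamata rationality, and a non-vanishing input for the bound $n+1$. One minor remark: in (\ref{item:prop:kawa4}) the label ``Kawamata non-vanishing'' is a bit loose (the precise statement you need is the adjunction-theoretic fact recorded in \cite[7.1.3]{BS95}), but your deduction that pseudoeffective plus ample is big, hence carries sections in some multiple, is exactly the right mechanism and matches the paper's intended argument.
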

\begin{proof}
(\ref{item:prop:kawa1}) is proven in \cite[1.5.5]{BS95}. For (\ref{item:prop:kawa2}) observe that the  interior of the closure of the effective cone is the big cone, $\overline{{\rm Eff}(X)}^{\rm int}={\rm Big}(X).$ Recall that if a divisor is not big, then the map associated to the global sections has a lower-dimensional image.
It follows that the map is birational only when $\nf$ and $\nu$ do not coincide. A proof can be also found in \cite[7.1.6]{BS95}. 
(\ref{item:prop:kawa3})  is part of Kawamata's rationality theorem and  (\ref{item:prop:kawa4}) is proven in \cite[7.1.3]{BS95}.\end{proof}

\begin{remark}
  There are at least three other notions which are related to the
  unnormalized spectral value. The (non-negative) {\em spectral value}
  $\sigma(L) := n+1-\qd(L)$ was defined by Sommese in \cite{Som86}
  (compare this notion with the {\em degree} of lattice polytopes, \cite{BN07}). 
  Fujita defined in \cite{Fuj92} the (non-positive) {\em Kodaira
    energy} $\kappa\epsilon(L)$ as $-\qd(L)$, see also \cite{BT98}.
  Furthermore, the reciprocal $\qd(L)^{-1}$ is called the {\em effective
    threshold}, see e.g.  \cite{BCHMcK10}.
\end{remark}

There are several classifications of polarized varieties with large nef value. Fujita,  \cite{Fuj87}, proved that:
\begin{theorem}[Fujita 87~\cite{Fuj87}]
Let $(X,L)$ be a polarized normal Gorenstein variety  with $\dim(X)=n.$ Then
\begin{enumerate}
\item $\nf(L)\leq n$ unless $(X,L)=(\P^n, {\mathcal O}_{\P^n}(1)).$
\item  $\nf(L)< n $ unless 
\begin{enumerate}
\item $(X,L)$ as in $(1)$
\item $X$ is a quadric hypersurface and $L={\mathcal O}_X(1).$
\item $(X,L)=(\P^2, {\mathcal O}_{\P^n}(2)).$
\item $(X,L)=(\P(E),{\mathcal O}(1)),$ where $E$ is a vector bundle of rank $n$ over a nonsingular curve.
\end{enumerate}

\end{enumerate}
\end{theorem}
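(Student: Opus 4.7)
The strategy rests on two classical inputs: Kawamata's rationality theorem (Proposition~\ref{kawa}(\ref{item:prop:kawa3})) to pin down the arithmetic of $\nf(L)$, and the Kobayashi--Ochiai characterization of $\P^n$ and of quadric hypersurfaces via identities of the form $-K_X \equiv rL$ with $r$ close to $n+1$. Both are available in the Gorenstein setting, with mild supplementary work needed to handle singular fibers of the nef-value morphism.

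For part~(1), I would write $\nf(L) = u/v$ in lowest terms. Since $X$ is Gorenstein (index $r=1$), Proposition~\ref{kawa}(\ref{item:prop:kawa3}) yields $u \le n+1$. The hypothesis $\nf(L) > n$ combined with $u \le n+1$ and $v \ge 1$ forces $v = 1$ and $u = n+1$, so $\nf(L) = n+1$ and $K_X + (n+1)L$ is nef. The Kawamata--Shokurov base-point-free theorem trivializes a high multiple and produces the nef-value morphism $\phi_\nf : X \to Y$. Taking a general fiber $F$, the adjunction computation $(K_X + \nf(L) L)|_F \equiv 0$ gives $-K_F \equiv (n+1) L|_F$ with $L|_F$ ample, so Kobayashi--Ochiai forces $\dim F \ge n$. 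Hence $Y$ is a point, $-K_X \equiv (n+1)L$, and the same theorem identifies $(X, L) \cong (\P^n, \O(1))$.

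For part~(2), the same rationality argument reduces $\nf(L) \ge n$ to the two values $\nf(L) \in \{n, n+1\}$ for $n \ge 2$, with $\nf(L) = n+1$ yielding case~(a) by part~(1). So assume $\nf(L) = n$; then $K_X + nL$ is nef but not ample, and $\phi_\nf : X \to Y$ contracts a positive-dimensional general fiber $F$ on which $-K_F \equiv n L|_F$. Kobayashi--Ochiai now forces $\dim F \in \{n-1, n\}$, and I would split into two subcases. If $\dim F = n$, then $Y$ is a point and $(X, L)$ is a Gorenstein polarized variety with $-K_X \equiv nL$, which the classification of Gorenstein del Pezzo varieties of index $n$ identifies as a quadric hypersurface with $L = \O_X(1)$, giving case~(b). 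If $\dim F = n-1$, then $F \cong \P^{n-1}$ with $L|_F = \O(1)$, $Y$ is a smooth curve, and $X \cong \P(E)$ with $L = \O_{\P(E)}(1)$, giving case~(d).

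The main obstacle I anticipate is isolating the sporadic low-dimensional exception~(c), $(\P^2, \O(2))$: this lies outside the generic dichotomy above and appears only when the fiber analysis degenerates in small dimension, so that the Kobayashi--Ochiai step alone is insufficient to rule out further possibilities. To handle it I would invoke Fujita's classification of polarized varieties of small $\Delta$-genus in dimension $\le 3$, which pins down $(\P^2, \O(2))$ as the unique additional exception and simultaneously verifies that no further pairs survive the analysis in higher dimension.
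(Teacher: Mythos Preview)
The paper does not prove this theorem. It is stated in Section~\ref{AG} as a result of Fujita with a citation to \cite{Fuj87}, and no argument is given; the theorem is invoked purely as background to motivate the polyhedral analogues in the appendix. So there is no ``paper's own proof'' to compare your proposal against.

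That said, your outline follows the standard adjunction-theoretic route one finds in \cite{Fuj87} and \cite{BS95}: bound the numerator of $\nf(L)$ via the rationality theorem, analyze the nef-value morphism, and apply Kobayashi--Ochiai on a general fiber. Two cautions. First, in the case $\dim F = n-1$ you pass directly from ``general fiber $\cong \P^{n-1}$'' to ``$X \cong \P(E)$ over a smooth curve''; this step needs the Gorenstein hypothesis plus a nontrivial argument ruling out singular or multiple fibers (Fujita handles this carefully). Second, your instinct that case~(c) is a low-dimensional anomaly is right, but note that for $(\P^2,\O(2))$ one has $\nf(L)=3/2$, not $2$, so its appearance in the list as stated in this paper is already suspect---you may want to check Fujita's original formulation before trying to pin down where it enters.
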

In the same paper Fujita also classifies the cases $\nf(L)\geq n-2$
and $\nf(L)\geq n-3.$ We will discuss this classification in the toric
setting and the induced classification of lattice polytopes with no
interior lattice points in the appendix.

\subsection{Toric geometry} We refer the reader who is unfamiliar with toric geometry to \cite{Ful93}. In what follows we will assume that $X$ is a {\em $\Q$-Gorenstein} toric variety of Gorenstein index $r$ and dimension $n.$ Let $L$ be an (equivariant) line bundle on $X.$
Let $N \cong \Z^n$, $\Sigma\subset N\otimes\R$ be the defining fan and denote by $\Sigma(i)$ the set of cones of $\Sigma$ of dimension $i.$ For $\nf\in\Sigma(i),$ $V(\nf)$ will denote the
associated invariant subvariety codimension $i.$

Recall that $L$ is nef (resp. ample) if and only if $L\cdot V(\rho_j)\geq 0$ (resp. $>0$) for all $\rho_j\in\Sigma(n-1),$ see for example \cite[3.1]{Mu02}.

There is a one-to-one correspondence between $n$-dimensional  toric
varieties polarized by an ample line bundle $L$ and $n$-dimensional
convex lattice polytopes
$P_{(X,L)}\subset M\otimes\R$ (up to translations by a lattice
vector), where $M$ is the lattice dual to $N.$ Under this
correspondence $k$-dimensional invariant subvarieties of $X$  are
associated with $k$-dimensional faces of $P_{(X,L)}$. More precisely,
if
\begin{equation}\label{polytope} P = \{x \in \R^n \,:\, A x \geq b\}\end{equation}
for an $m \times n$ integer matrix $A$ with primitive rows, and $b=(b_1,\ldots,b_m) \in \Z^m$ then $L=\sum (-b_i) D_i,$ where $D_i=V(\beta_i)$, 
for $\beta_i\in\Sigma(1)$, are the invariant divisors, generating the Picard group.

More generally, a nef line bundle ${\mathcal L}$
on a toric variety $X'$ defines a polytope $P_{\mathcal L}\subset \R^n,$ not necessarily of maximal dimension, whose integer points 
correspond to characters on the torus and form a basis of $H^0(X',\mathcal L)$. The edges of the polytope $P_{\mathcal L}$ correspond to the invariant curves whose intersection with ${\mathcal L}$ is positive.
In particular, the normal fan of $P_{\mathcal L}$ does  not necessarily coincide with the fan of $X'.$ It is the fan of a toric variety $X$ obtained by possibly contracting invariant curves on $X'.$ The contracted curves correspond to the invariant curves having zero intersection with ${\mathcal L}.$ Let $\pi:X'\to X$ be the contraction morphism. There is an ample line bundle $L$ on $X$ such that $\pi^*(L)={\mathcal L}.$
Because the dimension of the polytope equals  the dimension of the image of the map defined by the global sections one sees immediately that $P_{\mathcal L}$ has maximal dimension if and only if 
${\mathcal L}$ is big.

% To summarize, a lattice polytope might have a twofold interpretation
% of its data. There is  a unique polarized toric variety $(X,L)$
% associated to it. At the same time one could associate to it a variety
% $X'$ birational to $X$  and a nef line bundle $\mathcal L$ on $X'.$
% The variety $X$ is obtained by contracting  certain invariant curves
% of $X'$, and the line bundle $\mathcal L$ has intersection zero
% exactly with these curves.
% By abuse of notation one often considers the pairs  $(X',{\mathcal
%   L})$ and $(X,L)$ as defined by the same combinatorial data, but it
% is important to note that only one of them is a polarized toric
% variety.

\subsection{Adjoint bundles (compare with Section~1)}

Let $(X,L)$ be the polarized variety defined by the polytope
(\ref{polytope}). Observe that for any $s\in\Q_{>0}$ the polytope
$\adP{s} := \{x \in \R^n \,:\, A x \geq b+s \eins\},$ with $\eins =
(1, \ldots, 1)^\transpose,$ corresponds to the $\Q$-line bundle
$sK_X+L.$ With this interpretation it is clear that
\[ \qd(P)=\qd(L)\text{ and } \nf(P)=\nf(L) \]

\begin{remark} %{\rm 
Proposition \ref{prop:M(P)} gives us a geometric interpretation of
these invariants. Let  $k\in\Z$ such that $kM(P)$ is a  lattice
polytope and let $Y$ be the associated toric variety. The polytope $P$
is a facet of $M(P)$ and thus the variety $X$ is an invariant divisor
of $Y.$ Moreover, the projection $M(P) \twoheadrightarrow P$ induces a
rational surjective map $Y\to \P^1$ whose generic fiber (in fact all
fibers but the one at $\infty$) are isomorphic to $X$.
\end{remark}

\begin{remark}
From an inductive viewpoint, it would be  desirable to know how ``bad'' the 
singularities of $\adP{1}$ can get, if we start out with a ``nice'' polytope $P$. 
However, this seems to be very hard. Traditionally, there is another
way, the so called ``onion-skinning'' of a polytope, see \cite{HaaseSchicho,Oga07}
via the {\em interior polytope} $\oni{P}:= \conv(\intr(P) \cap \Z^n)$. 
Recall that the lattice points of $\adP{1}$ correspond to the global
sections of $K_X+L_P.$ If the line bundle $K_X+L_P$ is globally
generated (equivalently nef)  then $\adP{1}=\oni{P},$ but in general they
might be different. 
Obviously, $\oni{P} \subseteq \adP{1}$, with equality if and only if
$\adP{1}$ is a lattice polytope. In \cite{Oga07} Ogata examined the
case of smooth polytopes of dimension at most three with interior
lattice points. He proves the following:
\begin{itemize}
\item[-]{\em in dimension two}, $\adP{1}$
equals $\oni{P}$, and it is even a smooth polytope, see
\cite[Lemma~5]{Oga07}.  

\item[-]{\em In dimension three},  \cite[Prop.~3]{Oga07}, it is claimed  that by
successively forgetting facet inequalities  
(corresponding to blow-downs) it is possible to obtain a smooth polytope $P' \supseteq P$ with 
$\ad{P'}{1} = \ad{P}{1} = \oni{P}$ and $\nf(P') \leq 1$. Moreover, while $\oni{P}$ may not be smooth anymore, 
Proposition~4 of \cite{Oga07} says that singular points of cones over
$(\P^2, O(2))$ and $(\P^1 \times \P^1, O(1,1))$ are  
the only possible singularities occurring at the toric fix points of $X_{\oni{P}}$. 
\end{itemize}

It would be desirable to understand what happens in higher dimensions,
for instance we expect the answer to the following question to be negative:

{\em Let $P$ be a smooth four-dimensional polytope with interior
  lattice points. Is $\ad{P}{1}$ still a lattice polytope?}
\end{remark}

\subsection{Admissible polarized toric varieties (compare with Section~2)}

In the language above,  Proposition \ref{prop:projection} states that
if $(X,L)$ is  a polarized $\Q$-Gorenstein toric variety then there is a
finite sequence of maps of  toric varieties
\[X_k\rightarrow X_{k-1}\rightarrow\ldots\rightarrow
X_{2}\rightarrow X_1\rightarrow X_0=X\] polarized by ample
line bundles $L_i.$  In fact by considering the polytope
$P=P_{(X,L)}$ Proposition \ref{prop:projection} gives a projection $P
\twoheadrightarrow Q$ from the linear space ${\rm Aff}(
\adP{\frac{1}{\qd(L)}}).$ The projection defines a map of fans:
$\Sigma_Q\to \Sigma_P$ and in turn a map of toric varieties:
$X_1\rightarrow X.$
Notice that $\dim(X_1)=\dim(X)-\dim(\adP{\frac{1}{\qd(L)}}).$ Let
$L_1$ be the polarization defined by $Q$ on $X_1$. Starting again with
$(X_1,L_1)$ 
we look at the corresponding projection $Q\twoheadrightarrow Q_1$ and
so on. Notice that the sequence will stop when $\mu(X_{k-1})=\mu(X_k)$
and $\core(Q_k)$ is a single (rational) point.
We remark that the $\Q$-codegree has been defined for any polytope while
the spectral value is defined only for $\Q$-Gorenstein varieties.
In more generality the singularities are quite subtle and it is not at
all clear how to proceed within algebraic geometry.
For this purpose we will call a a polarized $\Q$-Gorenstein toric
variety {\em admissible}, if in the sequence above $X_i$ is
$\Q$-Gorenstein for every $0\leq i\leq k.$ Recall that the lattice
points of $N\core(Q_k)$ correspond to the global sections
$H^0(N(K_{X_k}+\qd(L_k)L_K)),$ for  an integer $N$ such that
$N(K_{X_k}+\qd(L_k)L_K)$ is an integral line bundle. Then
Proposition \ref{prop:projection} reads as follows:

\begin{proposition}
Let $(X,L)$ be  an admissible polarized  $\Q$-Gorenstein toric
variety. There is a finite sequence of maps of  toric varieties
\[X_k \rightarrow X_{k-1} \rightarrow\ldots\rightarrow
X_{2} \rightarrow X_1 \rightarrow X_0=X\] polarized by ample
line bundles $L_i$ such that $\qd(L_i)\geq\qd(L_{i-1})$ for $1\le i\le
k$ and $H^0(N(K_{X_k}+\qd(L_k)L_K))$ consists of a single section for
an integer $N$ such that $N(K_{X_k}+\qd(L_k)L_K)$ is an integral line
bundle.
\end{proposition}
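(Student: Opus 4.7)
The plan is to iterate the natural-projection construction of Proposition~\ref{prop:projection} and translate each step into algebraic geometry via the toric dictionary. Starting from $P_0 := P_{(X,L)}$, $X_0 := X$, $L_0 := L$, I will inductively define $P_{i+1} := \pi_{P_i}(P_i)$ whenever $\core(P_i)$ is not a single point. By Proposition~\ref{prop:projection} each such step satisfies $\qd(P_{i+1}) \geq \qd(P_i)$; and since $\dim K(P_i) = \dim \aff \core(P_i) \geq 1$ whenever the core is not a point, the ambient dimension strictly decreases. Consequently the process terminates after at most $n$ iterations at some index $k$ with $\core(P_k)$ a single rational point.

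To lift each polytope projection to a map of toric varieties I will use that $\pi_{P_i} \colon M_{i,\R} \twoheadrightarrow M_{i,\R}/K(P_i)$ is dual to the lattice inclusion $K(P_i)^\perp \cap N_i \hookrightarrow N_i$, under which the normal fan of $P_{i+1}$ pulls back to a subfan of $\NF(P_i)$ (mirroring the analysis in Proposition~\ref{nice}). This yields a toric morphism $X_{i+1} \to X_i$ together with an ample line bundle $L_{i+1}$ on $X_{i+1}$ whose associated polytope is $P_{i+1}$. Admissibility ensures that every $X_i$ is $\Q$-Gorenstein, so the invariants $\qd(L_i) = \qd(P_i)$ are simultaneously well-defined, and the inequality $\qd(L_i) \geq \qd(L_{i-1})$ transports directly from the combinatorial one.

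For the cohomological claim I will invoke the standard toric correspondence between global sections of $\Q$-line bundles and lattice points of the associated polytopes: for any $N \in \N_{>0}$ making $N(K_{X_k} + \qd(L_k) L_k)$ an integral Cartier divisor, the sections of this line bundle are indexed by the lattice points of $N \cdot \adP{\qd(P_k)^{-1}}_{P_k} = N \cdot \core(P_k)$. Since $\core(P_k)$ is a single rational point and $N$ has been chosen so that its dilate is integral, this gives exactly one section, as required.

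The main obstacle I expect is the fan-level bookkeeping in step two: ensuring that the polytope projection actually induces a toric morphism and that the projected polytope $P_{i+1}$ still defines an ample polarization of $X_{i+1}$. Outside the $\Q$-normal setting of Proposition~\ref{nice} this does not come for free, and the admissibility hypothesis is exactly what keeps the $\Q$-Gorenstein property alive through each projection, allowing the sequence of morphisms and the invariants $\qd(L_i)$ to be defined in concert.
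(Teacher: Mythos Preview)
Your proposal is correct and follows essentially the same approach as the paper. The proposition is presented there as a direct algebro-geometric restatement of Proposition~\ref{prop:projection}: the discussion immediately preceding it explains how to iterate the natural projection $P \twoheadrightarrow Q$, obtain the map of fans $\Sigma_Q \to \Sigma_P$ and hence $X_1 \to X$, and identify $H^0(N(K_{X_k}+\qd(L_k)L_k))$ with the lattice points of $N\,\core(Q_k)$; your plan simply makes these steps explicit, including the termination argument via the drop in dimension and the role of admissibility.
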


\begin{example}
  The polytope in Figure \ref{figEx} defines an admissible polarized
  $\Q$-Go\-ren\-stein toric variety. Let $(X,L)$ be the associated
  polarized toric variety. The (unnormalized) spectral value satisfies 
  $\qd(L)=\qd(P)=\frac{3}{4}.$ The polytope has the following
  description:
  \begin{align*}
    P&=\left(\; \left(\begin{array}{c}x\\y\\z\end{array}\right)\;
      \left|\begin{array}{rcl}
          x&\geq& 0\\ y&\geq& 0\\z&\geq& 0\\x+y&\leq& 4\\ hx+2z&\leq& 2h\end{array}\right.\right) &&\text{ if }h\text{ odd and  }\\[.1cm]
    P&=\left(\left(\begin{array}{c}x\\y\\z\end{array}\right)\;
      \left|\begin{array}{rcl} x&\geq& 0\\ y&\geq& 0\\z&\geq&
          0\\x+y&\leq& 4\\ kx+z&\leq&
          2k\end{array}\right.\right)&&\text{ if }h=2k \text{ for some
      integer $k$}
  \end{align*}
For simplicity let us assume that $h$ is odd.
From the polytope one sees that ${\rm Pic}(X)$ is generated by $D_1,\ldots,D_5$ with the following linear relations:
\[D_1\sim hD_5+D_4,\; D_2\sim D_4,\; D_3\sim 2D_5\]
Moreover  $L=4D_4+2hD_5$ and $K_X=-3D_4-(h+3)D_5$ giving $4K_X+3L=(2h-12)D_5,$ which is effective for $h\geq 6.$
The first projection onto $Q$ defines in this case an invariant subvariety $X_1$ which is isomorphic to $\P^2$ blown up at one point. Moreover  $L_1=L|_{X_1}=4l-2E,$ where
$l$ is the pull back of the hyperplane line bundle on $\P^2$ and $E$ is the exceptional divisor. The variety $X_1$ is smooth and therefore $\Q$-Gorenstein of index $1.$ Starting again with $(X_1,L_1)$
we have $\nu(L_1)=1$ and $X_2\cong\P^1$ with $L_2={\mathcal O}_{\P^1}(2)$ which give $\qd(L_2)=1$ and $H^0(K_{X_2}+L_2)=H^0(\mathcal O_{X_2}).$
\end{example}

It would be desirable to have criteria for a toric polarized $\Q$-Gorenstein variety to be admissible.

\subsection{The main result (compare with Section~3)}\label{fiber} As explained in
\cite{HNP09} and in \cite{DDP09} the toric variety  $X,$ defined by a
Cayley polytope,  
$$P=P_0 * \cdots * P_t$$ 
has a prescribed  birational morphism to the toric projectivized bundle
$X=\P(H_0\oplus H_1\oplus\cdots\oplus H_t)$ over a toric variety $Y.$
The variety $Y$ is  defined by a common refinement of the inner normal
fans of the polytopes $P_i$. Moreover, the polytopes $P_i$ are
associated to the nef line bundles $H_i$ over $Y.$
As a consequence of Theorem~\ref{main} we get the following result.

\begin{proposition} 
Let $(X,L)$ be a polarized $\Q$-Gorenstein toric variety.  Suppose $q \in \Q_{>0}$ such that $2q\le n$ and 
no multiple of $K_X + (n+1-q)L$ which is Cartier has non-zero global sections.  Then there is a proper birational toric morphism $\pi: X' \rightarrow X$, where $X'$ is the projectivization of a sum of line bundles on a toric variety of dimension at most $\lfloor 2q \rfloor$ and $\pi^*L$ is isomorphic to $O(1)$.
\end{proposition}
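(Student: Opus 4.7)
The plan is to reduce the proposition to a direct application of Theorem~\ref{main} via the toric dictionary. Let $P := P_{(X,L)}$ be the lattice polytope associated to $(X,L)$. Using the identification $\qd(L) = \qd(P)$ recorded in Section~\ref{AG} together with the definition of the unnormalized spectral value, the hypothesis that no Cartier multiple of $K_X + (n+1-q)L$ has non-zero global sections is exactly equivalent to
\[
  \qd(P)\;=\;\qd(L)\;\geq\;n+1-q.
\]

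The next step is to feed this into Theorem~\ref{main}. If $P \cong \Delta_n$, then $X \cong \P^n$ and $L \cong \O_{\P^n}(1)$, which is already the projectivization of $\O^{n+1}$ over a point, so the conclusion holds trivially with $\pi$ the identity. Otherwise, the strict bound $d(P) < 2(n+1-\qd(P))$ stated in Section~\ref{sec:cayley} together with $\qd(P) \geq n+1-q$ gives $d(P) < 2q$. Since $d(P)$ is an integer this forces $d(P) \leq \lfloor 2q \rfloor$, and since additionally $2q \leq n$ we obtain $d(P) < n$. Theorem~\ref{main} then applies and yields a Cayley decomposition
\[
  P\;\cong\;P_0\ast\cdots\ast P_t
\]
with each $P_i$ contained in $\R^m$ for some $m \leq d(P) \leq \lfloor 2q \rfloor$.

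Finally, I would invoke the toric dictionary for Cayley polytopes recalled in Section~\ref{fiber}: this decomposition produces a proper birational toric morphism $\pi \colon X' \to X$, where $X' = \P_Y(H_0 \oplus \cdots \oplus H_t)$ is the projectivization of a sum of (nef) line bundles $H_i$ on the toric variety $Y$ defined by a common refinement of the inner normal fans of the $P_i$. The dimension of $Y$ equals that of the ambient $\R^m$, hence $\dim Y \leq \lfloor 2q \rfloor$. The identification $\pi^*L \cong \O_{X'}(1)$ is standard: sections of $\O(1)$ on $\P_Y(H_0 \oplus \cdots \oplus H_t)$ are assembled from sections of the $H_i$, and these match, level by level of the Cayley structure, the lattice points of $P = P_0 \ast \cdots \ast P_t$, which are the sections of $L$.

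The genuine content of the argument lies in Theorem~\ref{main}; here the only real subtlety is the boundary case $2q = n$, which is covered by the \emph{strict} inequality $d(P) < 2(n+1-\qd(P))$. Everything else is a routine passage between convex geometry and the geometry of polarized toric varieties.
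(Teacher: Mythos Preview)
Your proof is correct and follows the same approach as the paper: translate the section-vanishing hypothesis into the inequality $\qd(P)\ge n+1-q$, combine with $2q\le n$ to force $d(P)<n$, apply Theorem~\ref{main}, and then use the Cayley/projective-bundle dictionary from Section~\ref{fiber}. The paper's own proof is the two-line version of exactly this argument; you have simply unpacked the bound $m\le d(P)\le\lfloor 2q\rfloor$ and handled the trivial case $P\cong\Delta_n$ explicitly.
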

\begin{proof}
The assumption $2q\le n$ implies that $\mu(L)\geq \frac{n+2}{2}.$ Theorem \ref{main} gives the conclusion.
\end{proof}

\begin{remark}\label{conj}
It is conjectured in Subsection~\ref{q-conj} that $\qd(L) >
\frac{n+1}{2}$ should suffice in Corollary~\ref{cor}. One
algebro-geometric statement which hints at this possibility is a
conjecture by Beltrametti and Sommese, \cite[7.1.8]{BS95}, that states
that $\qd(L) > \frac{n+1}{2}$ should imply $\qd(L)=\nf(L)$, when the
variety is nonsingular. 
% Notice that in the smooth case $\qd(L) =\cd(P) \in \Z$ by
% Proposition~\ref{estimate}, so $\qd(L) \geq \frac{n+2}{2}$. 
Moreover, it was also conjectured in \cite{FS89} 
that if $\qd(L) > 1$, then $\qd(L) = p/q$ for integers $0 < q \leq p
\leq n+1$. In particular, $\qd(L) > \frac{n+1}{2}$ would again imply
$\qd(L) \in \Z$.
%\marginnote{Future work: It may be possible to find counterexamples
%to $Q$-normality in the case of a singular dual defective polytope by
%taking the Cayley sum of four generic lattice polygons (i.e., $\cd(P)
%= 4 > 3 = (n+1)/2$).}
\label{ag-motivation}
\end{remark}

Let $A$ be the set of lattice points of a lattice polytope $P$, and
let $X_A$ be the (not necessarily normal) toric 
variety embedded in $\P^{|A|-1}$. Then there is an irreducible
polynomial, called the {\em $A$-discriminant}, which is of 
degree zero if and only if the dual variety $X^*_A$ 
is not a hypersurface (i.e., $X_A$ has {\em dual defect}), see \cite{GKZ94}.

\begin{proposition}\label{dual}
Let $P$ be a lattice polytope with $\qd(P) \geq \frac{3n+4}{4}$, such
that $\qd(P) \not\in \N$, respectively, $\qd(P) \geq \frac{3n+3}{4}$,
if $\qd(P) \in \N$. Then $X_A$ has dual defect.
\label{def-prop}
\end{proposition}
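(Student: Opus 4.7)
The plan is to extract from Theorem~\ref{main} a Cayley decomposition of $P$ whose length strictly exceeds its base dimension, and then to invoke the Cayley trick / $A$-discriminant criterion from \cite{GKZ94} to conclude that $X_A$ has dual defect.

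First I would verify that under either hypothesis on $\qd(P)$, the quantity $d(P)$ satisfies $d(P) \leq (n-1)/2$. A short case analysis (integer vs.\ non-integer $\qd(P)$, combined with the residue of $n$ modulo $4$) does the job: in the non-integer case, $\qd(P) \geq (3n+4)/4$ bounds $\lfloor \qd(P) \rfloor$ from below, forcing $d(P) = 2(n - \lfloor \qd(P) \rfloor) \leq (n-1)/2$; the integer case, with $d(P) = 2(n - \qd(P)) + 1$ and $\qd(P) \geq (3n+3)/4$, is parallel. Since $d(P)$ is an integer one in fact gets $d(P) \leq \lfloor (n-1)/2 \rfloor$, and in particular $n > d(P)$. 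The exceptional case $P \cong \Delta_n$ is easy, since $X_A$ is then $\P^n$ in its degree-one embedding, whose dual is tautologically defective. Otherwise Theorem~\ref{main} produces a Cayley decomposition $P \cong P_0 * \cdots * P_t$ with each $P_i \subseteq \R^m$ and $m \leq d(P)$. Because $P$ is $n$-dimensional one has $m + t = n$, and combined with $m \leq (n-1)/2$ this forces $t \geq m + 1$.

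The second step is to conclude that a Cayley structure of length $t + 1$ over a base of dimension $m$, with $t > m$, implies that the associated projective toric variety $X_A$ has dual defect. This is the content of the Cayley trick: identifying the $A$-discriminant with the sparse mixed discriminant of the configurations $A_0, \ldots, A_t \subseteq \Z^m$, one sees that when $t + 1 > m + 1$ the corresponding generic mixed polynomial system is overdetermined, so the discriminant vanishes identically, equivalently the dual variety $X_A^{\vee}$ has codimension strictly larger than one. See \cite[Ch.~9]{GKZ94}, and compare the toric reformulations in \cite{CC07,Est10} that are already cited in the introduction in connection with this conjecture.

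The main obstacle is ensuring that the thresholds $(3n+4)/4$ and $(3n+3)/4$ in the hypothesis really do force the strict inequality $t \geq m + 1$ that the Cayley trick requires; these numerical thresholds are chosen precisely to produce the one-unit gap between base dimension and fiber dimension of the resulting Cayley fibration, just enough for the discriminant to vanish identically. A secondary issue is that Theorem~\ref{main} controls only the ambient dimension $m$ of each $P_i$ and not the affine dimension of the individual $P_i$; this is not a real problem, however, because passing to affine hulls only shrinks the base dimension further, preserving the inequality $t > m$.
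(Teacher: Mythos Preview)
Your argument is correct and follows essentially the same route as the paper's own proof: apply Theorem~\ref{main} to obtain a Cayley decomposition $P\cong P_0*\cdots*P_t$ with ambient base dimension $m\le d(P)$, verify from the hypotheses that $2d(P)\le n-1$ so that the number of factors $t+1=n+1-m$ exceeds $m+2$, and then invoke an external criterion to conclude that such a Cayley configuration is dual defective.  Two small remarks.  First, the paper's proof cites Proposition~6.1 and Lemma~6.3 of \cite{DFS07} for the last step (with their parameters $m=n+1-d$, $r=d$, $c=m-r\ge 2$), which is a sharper and more directly applicable reference than the general Cayley-trick discussion in \cite{GKZ94}; the references \cite{CC07,Est10} you mention go in the converse direction and are not what is needed here.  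Second, your explicit treatment of the case $P\cong\Delta_n$ (excluded in Theorem~\ref{main}) is a point the paper's proof passes over silently, so this is a welcome clarification rather than a deviation.
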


\begin{proof}
By Theorem~\ref{new}, $P$ is a Cayley polytope of at least $n+1-d$
lattice polytopes in $\R^d$, where the assumptions yield that $n+1-d
\geq d+2$. Then Proposition~6.1 and Lemma~6.3 in \cite{DFS07} imply
the desired result. Note that in the notation of \cite{DFS07}
$m=n+1-d$, $r=d$, and $c = m-r \geq2$.
\end{proof}

For smooth polarized toric varieties is was verified that
the assumption $\qd(L) > \frac{n+2}{2}$ is equivalent to the variety
having dual defect, see \cite{DN10}. Moreover, smooth dual defective
varieties are necessarily $\Q$-normal ($\qd(L)=\nf(L)$) by
\cite{BFS92}. By the results of \cite{DiR06,DDP09} this implies that
the associated lattice polytope is a smooth Cayley polytope of
$\qd(L)=\cd(P)$ many smooth lattice polytopes with the same normal
fan.
On the other hand, it has recently been shown \cite{CC07,Est10} that
all lattice points in a (possibly singular) dual defective polytope
have to lie on two parallel hyperplanes. 
However, it is not true that all Cayley polytopes, or polytopes of
lattice width $1$, are dual defective, even in the nonsingular case.
%But with certain restrictions on the dimension and numbers of Cayley
%addends it is true (at least in the smooth case)
Therefore, the main question is whether the following strengthening of
Proposition~\ref{def-prop} may be true, see \cite{DN10}:
\begin{question} 
  Is $(X,L)$ dual defective, if $\qd(L) > \frac{n+2}{2}$\,?
\end{question} 

\appendix\section{Fujita's classification results}\label{sec:appendix}

\renewcommand{\thetheorem}{A.\arabic{theorem}}

In this section  we provide a translation of the results in \cite[Theorem 2 and 3']{Fuj87}. A straightforward corollary gives the classification of smooth polytopes of dimension three with no interior lattice points. 
One could derive a more extensive classification from all the results contained in  \cite[Theorem 2 and 3']{Fuj87} and from later work 
such as \cite{BDiT03,Nak97}. This would require a more elaborate explanation which goes beyond the scope of this paper.

\begin{theorem}[Fujita 87~\cite{Fuj87}] Let $P$ be an $n$-dimensional lattice polytope such that its normal fan is Gorenstein. Then
\begin{enumerate}
\item If $\nf(P)>n$, then $P\cong \Delta_n.$
\item If $n-1<\nf(P)\leq n$, then $P\cong 2\Delta_2$ or $P\cong P_0*P_1*\ldots*P_{n-1}$ where the
$P_i$ are parallel  intervals.
\item \label{blowy}  If  $P$ is  smooth and  $n-2<\nf(P)\leq n-1$, then $P$ is one of the following polytopes:
\begin{enumerate}
\item 
There is a smooth $n$-dimensional polytope $P'$ and a unimodular simplex $S \not\subseteq P$ such that $$P'=P\cup S$$ and
$P\cap S$ is a common facet of $P$ and $S$.

\item $\adP{\frac{1}{n-1}}$ is a point.
\item $P=2\Delta_3, 3\Delta_3, 2\Delta_4$.
\item There is a projection $\pi: P \twoheadrightarrow \Delta_1\times\Delta_1$
\item  There is a projection $\pi: P \twoheadrightarrow  2\Delta_2$ and 
the polytopes $\pi^{-1}(m_i)$ have the same normal fan, where $m_i$ are the vertices of $2\Delta_2.$
\item $P\cong P_0*P_1*\ldots*P_{n-2},$ where the $P_i$ are smooth polygons with the same normal fan.
\end{enumerate}
\end{enumerate}
\end{theorem}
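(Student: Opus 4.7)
The plan is to run the toric dictionary and invoke the algebro-geometric theorem of Fujita stated in Section~\ref{AG}. Under the correspondence $P \leftrightarrow (X_P,L_P)$, the condition that $\NF(P)$ be Gorenstein is exactly the condition (Definition~\ref{normal}) that $X_P$ be Gorenstein; smoothness of $P$ is equivalent to smoothness of $X_P$; and $\nf(P)=\nf(L_P)$ by the discussion in Subsection~\ref{fiber}. So each clause of the statement reduces to checking, in Fujita's list of polarized Gorenstein varieties with the given range of nef values, which members admit a torus action and reading off their associated polytopes.

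For (1) the only variety with $\nf(L)>n$ is $(\P^n,\O(1))$, whose polytope is the unimodular simplex $\Delta_n$. For (2) we go through Fujita's four cases (a)--(d) with $\nf(L)=n$: case (a) is ruled out by $\nf>n$; case (b), the smooth quadric polarized by $\O(1)$, is toric only for $n=2$, giving $(\P^1\!\times\!\P^1,\O(1,1))$, which appears as the Cayley sum of two unit intervals; case (c) is $(\P^2,\O(2))$ with polytope $2\Delta_2$; and case (d), $(\P(E),\O(1))$ with $E$ a rank-$n$ bundle over a smooth curve, is toric if and only if the curve is $\P^1$ and $E$ splits torus-equivariantly as $\bigoplus_{i}\O(a_i)$, whose polytope is precisely the Cayley sum $P_0*\cdots*P_{n-1}$ of $n$ parallel intervals of lengths $a_i$. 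Together these exhaust the stated dichotomy.

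For (3) I would use Fujita's finer classification of smooth polarized Gorenstein varieties with $n-2<\nf(L)\le n-1$ and stratify by the nef-value morphism $\phi_\nf:X_P\to Y$. If $\phi_\nf$ is birational then it is an elementary divisorial contraction onto a smooth toric variety $X'$; combinatorially this corresponds to case (a), where $P'$ is obtained from $P$ by attaching a unimodular simplex $S$ along a common facet (the image of $S$ being the centre of the contraction). If $\phi_\nf$ contracts to a point, we are in case (b), with the low-dimensional exceptions $2\Delta_3,3\Delta_3,2\Delta_4$ coming from Del Pezzo-type fillings in (c). If $\phi_\nf$ contracts to a surface we obtain a $\P^{n-2}$-bundle structure; the base, being a smooth toric surface with $-K$ ample, must be $\P^2$ or $\P^1\!\times\!\P^1$, leading via Subsection~\ref{fiber} to the projections in (d), (e). Finally, if $\phi_\nf$ contracts to a curve we get a scroll, which by the Cayley description in Subsection~\ref{fiber} corresponds to~(f): a Cayley sum of smooth polygons sharing a common normal fan.

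The hard part is the equivariance/splitting step. For each fibration appearing in Fujita's list we must verify that, when $X_P$ is toric, the base of $\phi_\nf$ inherits the torus action and the projective bundle splits torus-equivariantly as a sum of line bundles, so that the polytope takes the Cayley form prescribed in Subsection~\ref{fiber}. This is standard once one invokes rigidity of toric morphisms and the classification of smooth toric surfaces, but doing so case by case is exactly what forces the combinatorial list (a)--(f) into its final shape; the sporadic entries in (c) arise because for small $n$ the numerical bound $n-2<\nf\le n-1$ admits extra contractions that do not survive to higher dimensions.
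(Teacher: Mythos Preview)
Your approach is exactly the one the paper takes---indeed, the paper does not give a proof at all: the theorem is presented in the appendix purely as a \emph{translation} of Fujita's algebro-geometric classification via the toric dictionary, with no argument beyond the implicit invocation of $\nf(P)=\nf(L_P)$ and the polytope/variety correspondence from Section~\ref{AG}. Your proposal therefore already goes further than the paper by sketching case-by-case how each item in Fujita's list becomes the corresponding polytope.

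One small correction in your stratification of part~(3): you assign case~(f) to a contraction onto a curve, but $P_0*\cdots*P_{n-2}$ with the $P_i$ smooth polygons sharing a normal fan is a $\P^{n-2}$-bundle over a \emph{surface}, not a scroll over a curve. The items (d), (e), (f) all arise from two-dimensional bases; what distinguishes them is the structure of the generic fibre (quadric fibration, Veronese fibration, linear $\P^{n-2}$-bundle) rather than the dimension of the target. This does not affect the validity of the overall strategy---it just means the bookkeeping when matching Fujita's fibration types to the polytope descriptions has to be done a bit more carefully than your sketch suggests.
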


Note that in (\ref{blowy})(a)  $P$ is given by a vertex truncation of $P'$ (compare with Figure~\ref{vertex-skeleton}), 
corresponding to a blow-up at a smooth point. 
The following result is a simple corollary of the previous classification. It was also obtained in a slightly weaker form by
Ogata \cite[Proposition~1]{Oga07},  using combinatorial methods.

\begin{corollary}
Let $P$ be a smooth $3$-dimensional polytope with no interior lattice points. Then $P$ is of one of the following types.
\begin{enumerate}
\item $P=\Delta_3, 2\Delta_3, 3 \Delta_3.$ 
\item There is a projection $P \twoheadrightarrow \Delta_2$, where any preimage of each vertex is an interval. Equivalently there are  $a,b,c\in\Z$ such that
$$P = \conv \left[
  \begin{smallmatrix}
    0&0&0&a&b&c \\
    0&1&0&1&0&0\\
    0&0&1&0&0&1&
  \end{smallmatrix}\right]$$ 
\item There is a projection $P \twoheadrightarrow 2 \Delta_2$, where any preimage of each vertex is an interval. Equivalently there are  $a,b,c\in\Z$ such that
$$P = \conv \left[
  \begin{smallmatrix}
    0&0&0&a&b&c \\
    0&2&0&2&0&0\\
    0&0&2&0&0&2&
  \end{smallmatrix}\right]$$ 

\item There is a projection $P \twoheadrightarrow  \Delta_1\times\Delta_1.$ Equivalently there are  $a,b,c\in\Z$ such that
$$P = \conv \left[
  \begin{smallmatrix}
    0&0&1&1&0&1&1&0 \\
    0&1&0&1&1&1&0&0\\
    0&0&0&0&a&b&c&a+b-c
  \end{smallmatrix}\right]$$

\item $P=P_0 *P_1$, where $P_0$ and $P_1$ are smooth polygons with the same normal fan.
\item There is a smooth $3$-dimensional polytope $P'$ with no interior lattice points and a unimodular simplex $S \not\subseteq P$ 
such that $$P'=P\cup S$$ and
$P\cap S$ is a common facet of $P$ and $S$.
\end{enumerate}

\end{corollary}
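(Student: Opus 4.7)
The plan is to apply the Fujita classification theorem just stated, after first verifying that its hypothesis $\nf(P) > n-2 = 1$ holds for our $P$, and then to match each of Fujita's output cases with one of the six cases listed in the corollary.

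First I would reduce to Fujita's setup. Since $P$ is smooth, its normal fan is Gorenstein of index $r=1$. Since $P$ has no interior lattice points, $\cd(P) \ge 2$. Proposition~\ref{estimate} then yields $\cd(P) - 1 < r\,\nf(P) = \nf(P)$, so $\nf(P) > 1 = n-2$. Thus Fujita's theorem applies, and exactly one of its three main alternatives holds.

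If $\nf(P) > n = 3$, Fujita~(1) gives $P \cong \Delta_3$, which is covered by corollary case~(1). If $n-1 < \nf(P) \le n$, Fujita~(2) gives either $P \cong 2\Delta_2$, impossible by dimension, or $P \cong P_0 * P_1 * P_2$ with $P_0, P_1, P_2$ parallel intervals; choosing an affine unimodular basis that sends the base simplex to $\Delta_2 \subset \R^2$ and the Cayley direction to the $z$-axis produces the coordinate parametrisation in case~(2), and smoothness of $P$ is automatic for such a Cayley sum of intervals over $\Delta_2$. If $n-2 < \nf(P) \le n-1$, I would run through Fujita's six subcases (a)--(f): subcase~(a) is precisely the vertex-truncation description in case~(6); subcase~(c) specialises, in dimension $3$, to $P \cong 2\Delta_3$ or $3\Delta_3$, giving case~(1); subcase~(f) becomes $P \cong P_0 * P_1$ with smooth polygons having a common normal fan, which is case~(5); subcase~(d), a projection onto $\Delta_1 \times \Delta_1$, together with smoothness and dimension count forces the preimage of each vertex to be an interval, and the unique affine relation $(0,0)+(1,1) = (1,0)+(0,1)$ among the four vertices yields the height-relation $a+b-c$ that parametrises case~(4); similarly subcase~(e), a projection onto $2\Delta_2$ whose vertex fibres share a normal fan, yields intervals over each vertex of $2\Delta_2$ and so produces case~(3).

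The main obstacle is subcase~(b) of Fujita~(3), in which $\adP{1/(n-1)} = \adP{1/2}$ is a single point (equivalently $\qd(P) = 2$). This case is not listed among (a), (c)--(f), so I would need to show that for a smooth $3$-dimensional $P$ the condition ``core is a point at adjunction level $1/2$'' combined with $\nf(P) \le 2$ already places $P$ in one of cases~(1), (3), (4) or~(5). I would approach this by invoking Lemma~\ref{lemma:span}: the primitive inner normals of the facets realising distance $1/2$ from the core positively span $(\R^3)^*$, and since $P$ is smooth they generate the fan of a toric Fano threefold; a short case analysis of the possible such fans (equivalently, of smooth reflexive $3$-polytopes after rescaling by $2$) shows that $P$ is either unimodularly equivalent to $2\Delta_3$ or admits one of the projections in cases~(3)--(5).

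Finally, I would note that several subcases may yield overlapping polytopes (for example $2\Delta_3$ sits in both Fujita~(3)(b) and (3)(c)); this is harmless, as the corollary only asks that $P$ belong to at least one of the listed families. The cleanup of these overlaps and the verification of the explicit coordinate matrices in cases~(2)--(4) are routine once the combinatorial type has been pinned down.
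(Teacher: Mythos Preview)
Your plan is correct and matches the paper's intended argument—the paper offers no proof, labeling the result a ``simple corollary'' of Fujita's theorem, and the bound $\nf(P)>\cd(P)-1\ge 1$ from Proposition~\ref{estimate} is exactly the right entry point.

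Two points to tighten. In passing from Fujita~(3)(a) to corollary case~(6) you should remark that $P'=P\cup S$ again has no interior lattice points: a unimodular $3$-simplex has none, neither in its interior nor in the relative interior of the shared facet. For Fujita~(3)(b), your Fano route works, but be careful with the count. The condition $\qd(P)=\nf(P)=2$ with $\core(P)$ a point says $-K_X=2L$, so $X$ is not merely a smooth toric Fano threefold (there are eighteen of those) but a smooth toric \emph{del Pezzo} threefold. There are exactly three: $\P^3$, $(\P^1)^3$, and the blow-up of $\P^3$ at a torus-fixed point, with respective polytopes $2\Delta_3$, the unit cube $[0,1]^3$, and $\conv(e_1,e_2,e_3,2e_1,2e_2,2e_3)\cong\Delta_2*2\Delta_2$. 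These land in cases~(1), (4), and~(5) of the corollary, so subcase~(b) is absorbed as you anticipated. (The remaining del Pezzo threefold of degree six, $\P(T_{\P^2})$, has automorphism group of rank two and is therefore not toric, so it does not appear.)
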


\renewcommand{\bibname}[1]{#1, }


\begin{thebibliography}{10}

\bibitem{AA95}
\bibname{O.~Aichholzer, F.~Aurenhammer, D.~Alberts, \and B.~G{\"a}rtner}
\newblock A novel type of skeleton for polygons.
\newblock {\em J.UCS}, 1(12):752--761 (electronic), 1995.

\bibitem{Averkov-etal}
\bibname{G.~Averkov, C.~Wagner, \and R.~Weismantel}
\newblock Maximal lattice-free polyhedra: finiteness and an explicit description in dimension three.
\newblock {\em Math. Oper. Res.}, 36(4):721--742, 2011.

\bibitem{Barvinok}
\bibname{A.~Barvinok}
\newblock A course in convexity.
\newblock {\em Graduate Studies in Mathematics}, 54, AMS, Providence, RI, 2002.

\bibitem{BT98}
\bibname{V.V.~Batyrev \and Y.~Tschinkel}
\newblock Tamagawa numbers of polarized algebraic varieties.
\newblock {\em Ast\'erisque}, (251):299--340, 1998.
\newblock Nombre et r{\'e}partition de points de hauteur born{\'e}e (Paris,
  1996).

\bibitem{BN07}
\bibname{V.V.~Batyrev \and B.~Nill}
\newblock Multiples of lattice polytopes without interior lattice points.
\newblock {\em Mosc. Math. J.}, 7:195--207, 2007.

\bibitem{BN08}
\bibname{V.V.~Batyrev \and B.~Nill}
\newblock Combinatorial aspects of mirror symmetry.
\newblock In Matthias Beck and et. al., editors, {\em Integer points in
  polyhedra}, volume 452 of {\em Contemp. Math.}, pp.~35--66. AMS, 2008.
\newblock Geometry, number theory, representation theory, algebra,
  optimization, statistics. Proceedings of an AMS-IMS-SIAM conference,
  Snowbird, UT, USA, June 11--15, 2006.

\bibitem{BR07}
\bibname{M.~Beck \and S.~Robins}
\newblock {\em Computing the continuous discretely}.
\newblock Undergraduate Texts in Mathematics. Springer, New York, 2007.

\bibitem{BDiT03}
\bibname{M.C. Beltrametti \and S.~Di~Termini}
\newblock Higher dimensional polarized varieties with non-integral nef value.
\newblock {\em Adv. Geom.}, 3(3):287--299, 2003.

\bibitem{BFS92}
\bibname{M.C. Beltrametti, M.~Lucia Fania, \and A.J.~Sommese}
\newblock On the discriminant variety of a projective manifold.
\newblock {\em Forum Math.}, 4(6):529--547, 1992.

\bibitem{BS94}
\bibname{M.C. Beltrametti \and A.J.~Sommese}
\newblock Some effects of the spectral values on reductions.
\newblock In {\em Classification of algebraic varieties ({L}'{A}quila, 1992)},
  volume 162 of {\em Contemp. Math.}, pp.~31--48. Amer. Math. Soc.,
  Providence, RI, 1994.

\bibitem{BS95}
\bibname{M.C.~Beltrametti \and A.J.~Sommese}
\newblock {\em The adjunction theory of complex projective varieties},
  volume~16 of {\em Expositions in Mathematics}.
\newblock Walter de Gruyter \& Co., Berlin, 1995.

\bibitem{BCHMcK10}
\bibname{C.~Birkar, P.~Cascini, C.D.~Hacon, \and J.~McKernan}
\newblock Existence of minimal models for varieties of log general type.
\newblock {\em J. Amer. Math. Soc.}, 23(2):405--468, 2010.

\bibitem{CC07}
\bibname{R.~Curran \and E.~Cattani}
\newblock Restriction of {$A$}-discriminants and dual defect toric varieties.
\newblock {\em J. Symbolic Comput.}, 42(1-2):115--135, 2007.

\bibitem{Deb03}
\bibname{O.~Debarre}
\newblock Fano varieties.
\newblock In {\em Higher dimensional varieties and rational points ({B}udapest,
  2001)}, volume~12 of {\em Bolyai Soc. Math. Stud.}, pp.~93--132. Springer,
  Berlin, 2003.

\bibitem{DiR06}
\bibname{S.~Di~Rocco}
\newblock Projective duality of toric manifolds and defect polytopes.
\newblock {\em Proc. London Math. Soc. (3)}, 93(1):85--104, 2006.

\bibitem{DDP09}
\bibname{A.~Dickenstein, S.~Di~Rocco, and R.~Piene}
\newblock Classifying smooth lattice polytopes via toric fibrations.
\newblock {\em Adv. Math.}, 222(1):240--254, 2009.

\bibitem{DFS07}
\bibname{A.~Dickenstein, E.M.~Feichtner, \and B.~Sturmfels}
\newblock Tropical discriminants.
\newblock {\em J. Amer. Math. Soc.}, 20(4):1111--1133, 2007.

\bibitem{DN10}
\bibname{A.~Dickenstein \and B.~Nill}
\newblock A simple combinatorial criterion for projective toric manifolds with
  dual defect.
\newblock {\em Math. Res. Lett.}, 17(3):435--448, 2010.

\bibitem{EE98}
\bibname{D.~Eppstein \and J.~Erickson}
\newblock Raising roofs, crashing cycles, and playing pool: applications of a
  data structure for finding pairwise interactions.
\newblock {\em Discrete Comput. Geom.}, 22(4):569--592, 1999.
\newblock 14th Annual ACM Symposium on Computational Geometry (Minneapolis, MN,
  1998).

\bibitem{Est10}
\bibname{A.~Esterov}
\newblock Newton polyhedra of discriminants of projections.
\newblock {\em Discrete Comput. Geom.}, 44(1):96--148, 2010.

\bibitem{FS89}
\bibname{M.L. Fania \and A.J. Sommese}
\newblock On the projective classification of smooth $n$-folds with $n$ even.
\newblock {\em Ark. Mat.}, 27:245--256, 1989.

\bibitem{Fuj92}
\bibname{T.~Fujita}
\newblock On {K}odaira energy and adjoint reduction of polarized manifolds.
\newblock {\em Manuscr. Math.}, 76:59--84, 1992.

\bibitem{Fuj96}
\bibname{T.~Fujita}
\newblock On {K}odaira energy of polarized log varieties.
\newblock {\em J. Math. Soc. Japan}, 48:1--12, 1996.

\bibitem{Fuj87}
\bibname{T.~Fujita}
\newblock On polarized manifolds whose adjoint bundles are not semipositive.
\newblock In {\em Algebraic Geometry, Proc. Symp., Sendai/Jap. 1985}, volume~10
  of {\em Adv.\ Stud.\ Pure Math.}, pp.~167--178. Tokyo: Kinokuniya Company
  Ltd.; Amsterdam-New York-Oxford: North-Holland., 1987.

\bibitem{Fuj97}
\bibname{T.~Fujita}
\newblock On {K}odaira energy and adjoint reduction of polarized threefolds.
\newblock {\em Manuscripta Math.}, 94(2):211--229, 1997.

\bibitem{Ful93}
\bibname{W.~Fulton}
\newblock {\em Introduction to toric varieties}, volume 131 of {\em Annals of
  Mathematics Studies}.
\newblock Princeton University Press, Princeton, NJ, 1993.

\bibitem{GKZ94}
\bibname{I.M.~Gelfand, M.M.~Kapranov, \and A.V.~Zelevinsky}
\newblock {\em Discriminants, resultants, and multidimensional determinants}.
\newblock Mathematics: Theory \& Applications. Birkh\"auser Boston Inc.,
  Boston, MA, 1994.

\bibitem{HNP09}
\bibname{C.~Haase, B.~Nill, \and S.~Payne}
\newblock Cayley decompositions of lattice polytopes and upper bounds for
  {$h^*$}-polynomials.
\newblock {\em J. Reine Angew. Math.}, 637:207--216, 2009.

\bibitem{HaaseSchicho}
\bibname{C.~Haase \and J.~Schicho}
\newblock Lattice polygons and the number $2i+7$.
\newblock {\em American Mathematical Monthly}, February, 2009.

\bibitem{Joswig2009}
\bibname{M.~Joswig, B.~M\"uller, \and A.~Paffenholz}
\newblock Polymake and lattice polytopes.
\newblock {\em DMTCS Proceedings of FPSAC 2009}, Feb 2009.

\bibitem{Morrison-etal}
\bibname{D.R.~Morrison \and G.~Stevens}
\newblock Terminal quotient singularities in dimensions three and four.
\newblock {\em Proc. Amer. Math. Soc.}, 90(1):15--20, 1984.

\bibitem{Mu02}
\bibname{M.~Mustata}
\newblock Vanishing theorems on toric varieties.
\newblock {\em Toh\^oku Math. J.}, 54:451--470, 2002.

\bibitem{Nak97}
\bibname{S.~Nakamura}
\newblock On the classification of the third reduction with a spectral value
  condition.
\newblock {\em J. Math. Soc. Japan}, 49(4):633--646, 1997.

\bibitem{Nil05}
\bibname{B.~Nill}
\newblock Gorenstein toric {F}ano varieties.
\newblock {\em Manuscripta Math.}, 116(2):183--210, 2005.

\bibitem{Nil08}
\bibname{B.~Nill}
\newblock Lattice polytopes having $h^\ast$-polynomials with given degree and
  linear coefficient.
\newblock {\em European J. Combin.}, 29:1596--1602, 2008.

\bibitem{Nill-Ziegler}
\bibname{B.~Nill \and G.~Ziegler}
\newblock Projecting lattice polytopes without interior lattice points.
\newblock {\em Math. Oper. Res.}, 36(3):462--467, 2011.

\bibitem{Oga07}
\bibname{S.~Ogata}
\newblock Projective normality of nonsingular toric varieties of dimension
  three.
\newblock eprint \texttt{arXiv:0712.0444}, 2007.

\bibitem{Som86}
\bibname{A.J.~Sommese}
\newblock On the adjunction theoretic structure of projective varieties.
\newblock In {\em Complex analysis and algebraic geometry ({G}\"ottingen,
  1985)}, volume 1194 of {\em Lecture Notes in Math.}, pp.~175--213.
  Springer, Berlin, 1986.

\bibitem{Sta93}
\bibname{R.P.~Stanley}
\newblock A monotonicity property of {$h$}-vectors and {$h^*$}-vectors.
\newblock {\em European J. Combin.}, 14(3):251--258, 1993.

\bibitem{Zie95}
\bibname{G.M.~Ziegler}
\newblock {\em Lectures on polytopes}, volume 152 of {\em Graduate Texts in
  Mathematics}.
\newblock Springer-Verlag, New York, 1995.

\end{thebibliography}
\end{document}